\pdfoutput=1
\RequirePackage{ifpdf}
\ifpdf % We are running pdfTeX in pdf mode
\documentclass[pdftex]{sigma}
\else
\documentclass{sigma}
\fi

\usepackage[all]{xy}

\DeclareMathOperator{\Ad}{Ad}
\newcommand{\CoAd}{\Ad^\dagger}

\def\d{\mathrm{d}}

\DeclareMathOperator{\id}{id}

\newcommand{\toto}{\rightrightarrows}
\newcommand{\vzero}{\mathbf{0}}

\newcommand{\category}[2]
{
\begin{gather*}
\textit{objects:}\quad #1 \\
\textit{morphisms:}\quad #2
\end{gather*}
}

\def\bbp{\mathbb P}

\def\bbr{\mathbb R}

\def\bbx{\mathbb X}

\def\calv{\mathcal{V}}

\def\ffg{\mathfrak g}
\def\ffh{\mathfrak h}

\def\ffk{\mathfrak k}

\def\ffp{\mathfrak p}
\def\ffq{\mathfrak q}

\numberwithin{equation}{section}

\newtheorem{Theorem}{Theorem}[section]
\newtheorem{Corollary}[Theorem]{Corollary}
\newtheorem{Lemma}[Theorem]{Lemma}
\newtheorem{Proposition}[Theorem]{Proposition}
 { \theoremstyle{definition}
\newtheorem{Definition}[Theorem]{Definition}

\newtheorem{Remark}[Theorem]{Remark} }

\begin{document}

\allowdisplaybreaks

\newcommand{\arXivNumber}{1707.02828}

\renewcommand{\PaperNumber}{021}

\FirstPageHeading

\ShortArticleName{Nonlinear Stability of Relative Equilibria and Isomorphic Vector Fields}

\ArticleName{Nonlinear Stability of Relative Equilibria\\ and Isomorphic Vector Fields}

\Author{Stefan KLAJBOR-GODERICH}
\AuthorNameForHeading{S.~Klajbor-Goderich}

\Address{Department of Mathematics, University of Illinois at Urbana-Champaign,\\ 1409 W. Green Street, Urbana, IL 61801 USA}
\Email{\href{mailto:klajbor2@illinois.edu}{klajbor2@illinois.edu}}
\URLaddress{\url{https://faculty.math.illinois.edu/~klajbor2/}}

\ArticleDates{Received October 31, 2017, in final form March 09, 2018; Published online March 14, 2018}

\Abstract{We present applications of the notion of isomorphic vector fields to the study of nonlinear stability of relative equilibria.
Isomorphic vector fields were introduced by Hepworth [\textit{Theory Appl. Categ.} \textbf{22} (2009), 542--587] in his study of vector fields on differentiable stacks. Here we argue in favor of the usefulness of replacing an equivariant vector field by an isomorphic one to study nonlinear stability of relative equilibria. In particular, we use this idea to obtain a criterion for nonlinear stability. As an application, we offer an alternative proof of Montaldi and Rodr\'iguez-Olmos's criterion [arXiv:1509.04961] for stability of Hamiltonian relative equilibria.}

\Keywords{equivariant dynamics; relative equilibria; orbital stability; Hamiltonian systems}

\Classification{37J25; 57R25; 37J15; 53D20}

\section{Introduction}\label{Introduction}

Relative equilibria of equivariant vector fields and their stability have garnered much interest in the dynamics literature, partly due to their myriad applications in the sciences (see, for example,~\cite{GS02}). In this paper we present an approach to determining the stability of relative equilibria via the notion of isomorphic vector fields introduced by Hepworth~\cite{H09}. In particular, we argue that it can be useful to replace a given equivariant vector field with an isomorphic one for which it is easier to determine stability.

Recall that a relative equilibrium of an equivariant vector field is a point for which the vector field is tangent to the group orbit at that point.
It can be difficult to determine the stability of relative equilibria. Even determining linear stability poses a challenge. For an equilibrium, the Lyapunov stability criterion can guarantee linear stability if all the eigenvalues in the spectrum of the linearization of the vector field have negative real part (see, for example, \cite[Theorem~4.3.4]{AMR88}). In contrast, since the vector field is not necessarily zero at a relative equilibrium, the usual notion of a linearization does not make sense. Thus, we don't immediately have an analogue of the Lyapunov stability criterion.

A construction due to Krupa gives a way to linearize an equivariant vector field near a relative equilibrium and test for linear stability~\cite{K90}. Krupa's construction involves choosing a slice for the action through the relative equilibrium and projecting the vector field onto the slice. The projected vector field has an equilibrium at the original vector field's relative equilibrium, so we can linearize the projected vector field. This construction depends on a choice of slice and projection, but it turns out the real parts of the spectrum of the linearization are independent of these choices \cite[Lemma~8.5.2]{F07}. The Lyapunov stability criterion can then be used to test for linear stability of the equilibrium of the projected vector field. It can be shown that if this is linearly stable it implies the linear stability of the relative equilibrium of the original vector field \cite[Theorem~7.4.2]{CL00}. Furthermore, since the real parts of the eigenvalues of the spectrum are independent of the choices, we can choose any slice and projection to determine linear stability; ideally ones where the spectrum is easier to compute.

Not all stable equilibria are linearly stable, and the same is true of relative equilibria. To use Krupa's construction for nonlinear stability, as well as for other applications, we need to make sense of the choices involved. Hepworth's notion of isomorphism of vector fields is useful for this. Hepworth introduced isomorphic vector fields to define vector fields on differentiable stacks, a~categorical generalization of differentiable manifolds~\cite{H09}. Since differentiable stacks are, in some sense, represented by Lie groupoids, it is not surprising that vector fields on a~stack form a groupoid. This gives rise to a~notion of isomorphism between equivariant vector fields. Lerman used Hepworth's notion of isomorphism of vector fields to revisit Krupa's construction~\cite{L15}. In particular, he showed that the choices of slice and projection lead to isomorphic vector fields.

In this paper we show how considering vector fields up to isomorphism, in the sense of Hepworth, facilitates testing for nonlinear stability.
In Theorem \ref{Thm:MainTheorem1}, which we call here the slice stability criterion, we show that one can determine nonlinear stability of a relative equilibrium by testing for nonlinear stability of the corresponding equilibrium of the projected vector field.
This reduces the problem to the well-studied case of equilibria on a vector space with a representation of a compact Lie group.
In fact, one can test any vector field that is isomorphic to the projected vector field.
Hence, one additionally obtains the freedom to choose a convenient slice, projection, and isomorphism class representative to determine stability.

Hamiltonian relative equilibria are an important case where we may have nonlinear stability but not linear stability.
The integral curves of a Hamiltonian vector field do not exhibit energy dissipation, so we don't expect the relative equilibria to be linearly stable. Lerman and Singer~\cite{LS98} and Ortega and Ratiu~\cite{OR99}, building on work of Patrick~\cite{P91, P95}, showed that the definiteness of the Hessian of an augmented Hamiltonian function implies stability of the Hamiltonian relative equilibrium.
Montaldi and Rodr\'iguez-Olmos extended this criterion, allowing for a wide choice of augmented Hamiltonians to check for stability \cite[Theorem~3.6]{MRO15} (see also \cite[Theorem~2]{MRO11}).
They prove this extension by building on the bundle equations in \cite{RWL02, RdS97, WR01}.
We use Theorem~\ref{Thm:MainTheorem1} to provide an alternative proof of their result.
Our proof is based on the fact that the augmented Hamiltonian vector fields are isomorphic to the original Hamiltonian vector field and that a choice of augmented Hamiltonian is equivalent to a choice of an isomorphism class and a representative.

\subsection{Organization of the paper}

In Section~\ref{IsosAndRelEqs}, we present Hepworth's groupoid of equivariant vector fields in the context of Lie group actions, as well as the corresponding notion of isomorphism of equivariant vector fields. We also present an equivalent formulation of the results in \cite{L15}, and provide some general background and results.

In Section~\ref{Stability}, we prove a test for nonlinear stability of relative equilibria, Theorem~\ref{Thm:MainTheorem1}, which we call here the slice stability criterion. This is our main theorem on the nonlinear stability of relative equilibria.
We also show how isomorphisms of equivariant vector fields and one of the functors involved in the slice stability criterion preserve the stability of relative equilibria.

In Section~\ref{HamStability}, we apply the slice stability criterion to obtain a proof of the result of Montaldi and Rodr\'iguez-Olmos (Theorem~\ref{Thm:MROTheorem}). We use the Marle--Guillemin--Sternberg normal form \smash{\cite{GuS84, Ma85}} in this proof. In Section~\ref{MGSNormalForm}, we reduce the general case to the normal form computation.

\subsection{Notation and conventions}

Throughout the paper we will assume all manifolds are Hausdorff. We will denote Lie groups with uppercase Latin letters, their Lie algebras with the corresponding lowercase fraktur letter, and the duals of these Lie algebras by adding a star superscript. The {\it adjoint representation} of a Lie group on its Lie algebra will be denoted by $\Ad$, while its {\it coadjoint representation} on the dual of the Lie algebra will be denoted by~$\CoAd$. Given an action of a Lie group on a manifold, the {\it stabilizer subgroup} of a point $m$ will be denoted by the same letter as the group but with the point as a subscript (e.g.,~$G_m$). The Lie algebra of the stabilizer will also carry the point as a subscript (e.g.,~$\ffg_m$).

The vector space of smooth vector fields on a manifold $M$ will be denoted by $\Gamma(TM)$. Given a~diffeomorphism $f\colon M\to N$ between two manifolds, we will denote the corresponding {\it pushforward of vector fields along $f$} by $f_*\colon \Gamma(TM)\to\Gamma(TN)$ and the {\it pullback of vector fields along~$f$} by~$f^*\colon \Gamma(TN)\to \Gamma(TM)$. We will refer to both embedded and regular submanifolds. Recall an {\it embedded submanifold} of a manifold $M$ is a pair consisting of a manifold $N$ and a smooth embedding $f\colon N\to M$, whereas a {\it regular submanifold} of a manifold~$M$ consists of a subset~$A$ of~$M$, with smooth charts adapted from the charts of $M$, for which the inclusion $\iota\colon A\hookrightarrow M$ is a smooth embedding.

Given a smooth fiber bundle $\pi\colon P\to B$, the corresponding {\it vertical bundle} is the bundle over the manifold $P$ with total space $\calv P:=\ker\d\pi$. The projection $\calv P\to P$ is the restriction of the tangent bundle projection $TP\to P$, and hence the vertical bundle is a subbundle of the tangent bundle. We will also make use of associated bundles. Given a Lie group~$K$, a~manifold~$P$ with a~free and proper right action of~$K$, and a manifold~$F$ with a proper left action of $K$, the {\it associated bundle} is the bundle over the smooth orbit space $P/K$ with total space $P\times^K F:=(P\times F)/K$. Here, the group $K$ acts on the space $P\times F$ by $k\cdot (p,f):= (p\cdot k^{-1},k\cdot f)$ in a free and proper fashion from the left. We will denote the elements of $P\times^KF$ by $[p,f]$. The bundle projection $P\times^KF\to P/K$ is defined by $[p,f]\mapsto K\cdot p$, where $K\cdot p$ is the $K$-orbit of~$p$. If the manifold $F$ is a product of the form $M\times N$, we will denote the elements of $P\times^KF$ by $[p,m,n]$ instead of~$[p,(m,n)]$.

\section{Relative equilibria and isomorphic vector fields}\label{IsosAndRelEqs}

In this section we define the groupoid of equivariant vector fields on a manifold with a group action, and the corresponding notion of isomorphism of equivariant vector fields. We then describe Krupa's construction in this language, and Lerman's results about the groupoids of equivariant vector fields present in this construction. Along the way, we discuss how relative equilibria are preserved by isomorphisms of equivariant vector fields, equivariant extension of vector fields, pushforward and pullbacks of vector fields (when these are defined), and certain functors between groupoids of equivariant vector fields.

We work in the following setting:

\begin{Definition}[$G$-manifold]\label{Def:GMan} A manifold $M$ with an action of a Lie group $G$ is called a {\it $G$-manifold}. If the action of $G$ is a proper action then we say $M$ is a {\it proper $G$-manifold}.
\end{Definition}

By an equivariant vector field we mean:

\begin{Definition}[equivariant vector field]\label{Def:InvVF}
A vector field $X$ on a manifold $M$ is {\it equivariant} with respect to the action of a Lie group $G$ if for all $g\in G$ we have $X\circ g_M=\d g_M\circ X$, where $g_M\colon M\to M$ is the diffeomorphism $m\mapsto g\cdot m$.
If we need to specify the group we say $X$ is $G$-equivariant.
\end{Definition}

We next recall the definition of a relative equilibrium:

\begin{Definition}[relative equilibrium]\label{Def:RelEq}
Given an equivariant vector field $X$ on a $G$-manifold~$M$, a point $m\in M$ is a {\it relative equilibrium} of $X$ if the vector $X(m)$ is tangent to the group orbit $G\cdot m$. If we need to specify the group we say $m$ is a $G$-relative equilibrium.
\end{Definition}

\begin{Definition}[velocities]\label{Def:Velocities}
Let $M$ be a proper $G$-manifold, let $X$ be an equivariant vector field on $M$, and let $m$ be a point in $M$.
A~{\it velocity} for the point $m$ is a vector $\xi\in\ffg$ such that $X(m)=\xi_M(m)$, where
\begin{gather*}
\xi_M\colon \ M\to TM, \qquad \xi_M(m):=\frac{\d}{\d t}\Big|_0\exp(t\xi)\cdot m
\end{gather*}
is the fundamental vector field generated by the vector $\xi$.
\end{Definition}

\begin{Remark}
Velocities exist for relative equilibria since
\begin{gather*}
T_m(G\cdot m)=\{\nu_M(m)\,|\, \nu\in\ffg\}.
\end{gather*}
In fact, the existence of velocities at a point characterize that point as a relative equilibrium. Furthermore, since
\begin{gather*}
\ffg_m=\{\eta\in\ffg\,|\,\eta_M(m)=0\},
\end{gather*}
velocities are unique modulo the Lie algebra $\ffg_m$ of the stabilizer.
\end{Remark}

The following maps are needed to define morphisms of vector fields:

\begin{Definition}[infinitesimal gauge transformations]\label{Def:InfGaugeTransf}
{\it Infinitesimal gauge transformations} are the elements of the vector space
\begin{gather*}
C^\infty(M,\ffg)^G:=\{\psi\colon M\to \ffg\,|\, \psi(g\cdot m)=\Ad(g)\psi(m) \ \text{for all} \ g\in G, \, m\in M\}.
\end{gather*}
\end{Definition}

\begin{Remark}\label{Rem:InfGaugeTransf}
If the action of $G$ on $M$ is free and proper, then the orbit space $M/G$ is a~mani\-fold and the orbit space map $M\to M/G$ is a principal $G$-bundle. In this case, the space of infinitesimal gauge transformations $C^\infty(M,\ffg)^G$ is isomorphic to the space of smooth sections of the bundle $M\times^G\ffg\to M/G$.
\end{Remark}

The space of infinitesimal gauge transformations $C^\infty(M,\ffg)^G$ acts, as a group under pointwise addition, on the space of equivariant vector fields $\Gamma(TM)^G$ by
\begin{gather*}
C^\infty(M,\ffg)^G \times \Gamma(TM)^G \to \Gamma(TM)^G, \qquad
(\psi,X) \mapsto X+\psi_M,
\end{gather*}
where $\psi_M$ denotes the vector field on $M$ defined by
\begin{gather*}
\psi_M\colon \ M\to TM, \qquad \psi_M(m):=\frac{\d}{\d t}\Big|_0\exp\left(t\psi(m)\right)\cdot m .
\end{gather*}

\begin{Lemma}\label{Lemma:InvPsiM}
Let $M$ be a $G$-manifold and let $\psi\colon M\to\ffg$ be an infinitesimal gauge transformation on $M$.
The induced vector field $\psi_M$ is an equivariant vector field with respect to the action of~$G$.
\end{Lemma}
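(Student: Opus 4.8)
The plan is to verify directly the equivariance condition of Definition~\ref{Def:InvVF}: I must show that $\psi_M\circ g_M=\d g_M\circ\psi_M$ for every $g\in G$, that is, $\psi_M(g\cdot m)=\d g_M\big(\psi_M(m)\big)$ for all $m\in M$. Smoothness of $\psi_M$ is immediate from the smoothness of $\psi$, of the exponential map, and of the action, since $(t,m)\mapsto\exp(t\psi(m))\cdot m$ is smooth and $\psi_M$ is its derivative in $t$ at $0$; so the real content is equivariance, which I would establish by a direct computation on the defining curves.

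First I would unwind the left-hand side using the definition of the fundamental vector field $\psi_M$ together with the defining property of an infinitesimal gauge transformation. By definition,
\begin{gather*}
\psi_M(g\cdot m)=\frac{\d}{\d t}\Big|_0\exp\big(t\,\psi(g\cdot m)\big)\cdot(g\cdot m),
\end{gather*}
and substituting $\psi(g\cdot m)=\Ad(g)\psi(m)$ from Definition~\ref{Def:InfGaugeTransf} yields
\begin{gather*}
\psi_M(g\cdot m)=\frac{\d}{\d t}\Big|_0\exp\big(t\,\Ad(g)\psi(m)\big)\cdot(g\cdot m).
\end{gather*}

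The key ingredient is the standard naturality identity $\exp\big(\Ad(g)\xi\big)=g\exp(\xi)g^{-1}$, valid for all $\xi\in\ffg$, which lets me rewrite the curve inside as
\begin{gather*}
\exp\big(t\,\Ad(g)\psi(m)\big)\cdot(g\cdot m)=g\exp\big(t\,\psi(m)\big)g^{-1}\cdot(g\cdot m)=g\cdot\big(\exp(t\,\psi(m))\cdot m\big),
\end{gather*}
where the last equality uses associativity of the action. Differentiating at $t=0$ and applying the chain rule then pulls out $\d g_M$:
\begin{gather*}
\psi_M(g\cdot m)=\frac{\d}{\d t}\Big|_0 g\cdot\big(\exp(t\,\psi(m))\cdot m\big)=\d g_M\big(\psi_M(m)\big),
\end{gather*}
which is exactly the desired equivariance.

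I do not expect a serious obstacle here; the argument is a routine manipulation of fundamental vector fields. The one point to handle with care is the interplay between the $\Ad$-equivariance of $\psi$ and the conjugation identity for $\exp$: it is precisely the condition $\psi(g\cdot m)=\Ad(g)\psi(m)$ that converts the twisting by $g^{-1}$ into a clean left translation by $g$, so that the curve through $g\cdot m$ becomes the $g$-image of the curve through $m$. I would therefore state the identity $\exp(\Ad(g)\xi)=g\exp(\xi)g^{-1}$ explicitly, since it is the crux of the computation.
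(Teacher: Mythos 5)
Your proof is correct and follows essentially the same route as the paper's: substitute the $\Ad$-equivariance of $\psi$, apply the naturality identity $\exp(t\Ad(g)\xi)=g\exp(t\xi)g^{-1}$, and differentiate at $t=0$ to pull out $\d g_M$. The brief remark on smoothness of $\psi_M$ is a harmless addition the paper omits.
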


\begin{proof}
This is a consequence of the naturality of the exponential. Let $g\in G$ and $m\in M$, then
\begin{align*}
\psi_M(g\cdot m)
&=\frac{\d}{\d t}\Big|_0 \exp(t\psi(g\cdot m))\cdot g\cdot m \\
&=\frac{\d}{\d t}\Big|_0 \exp(t\Ad(g)\psi(m))\cdot g\cdot m \qquad \text{ by the equivariance of }\psi \\
&=\frac{\d}{\d t}\Big|_0 g\exp(t\psi(m))g^{-1}\cdot g\cdot m \qquad \text{ by the naturality of }\exp \\
&=(\d g_M)_m \psi_M(m).
\end{align*}
Hence, $\psi_M$ is an equivariant vector field.
\end{proof}

We can now define the groupoid of equivariant vector fields:

\begin{Definition}[groupoid of equivariant vector fields]\label{Def:GpoidIVFs}
Let $M$ be a $G$-manifold.
The {\it groupoid of equivariant vector fields} $\bbx(G\times M\toto M)$ is the action groupoid $C^\infty(M,\ffg)^G\ltimes\Gamma(TM)^G$ corresponding to the action of the infinitesimal gauge transformations $C^\infty(M,\ffg)^G$ on the $G$-equivariant vector fields $\Gamma(TM)^G$.
The groupoid of $G$-equivariant vector fields has
\category
{ \text{equivariant vector fields } X\in \Gamma(TM)^G,}
{ \text{pairs }(\psi,X)\in C^\infty(M,\ffg)^G\times\Gamma(TM)^G.}
The source function is given by
\begin{gather*}
s\colon \ C^\infty(M,\ffg)^G\times\Gamma(TM)^G\to\Gamma(TM)^G, \qquad (\psi,X)\mapsto X,
\end{gather*}
and the target function is given by
\begin{gather*}
t\colon \ C^\infty(M,\ffg)^G\times\Gamma(TM)^G\to\Gamma(TM)^G, \qquad (\psi,X)\mapsto X+\psi_M.
\end{gather*}
The composition of a composable pair of morphisms $(\varphi,X+\psi_M)$ and $(\psi,X)$ is given by
\begin{gather*}
(\varphi,X+\psi_M) \circ (\psi,X)=(\varphi+\psi, X).
\end{gather*}
The unit function is given by
\begin{gather*}
u\colon \ \Gamma(TM)^G\to C^\infty(M,\ffg)^G\times\Gamma(TM)^G, \qquad X\mapsto (0,X),
\end{gather*}
and the inversion function is given by
\begin{gather*}
-\colon \ C^\infty(M,\ffg)^G\times\Gamma(TM)^G\to C^\infty(M,\ffg)^G\times\Gamma(TM)^G, \qquad (\psi,X)\mapsto (-\psi,X).
\end{gather*}
\end{Definition}

\begin{Remark}
Hepworth \cite{H09} defined vector fields on differentiable stacks and showed they form a category. In the case of a quotient stack $[M/G]$ for the action of a compact group $G$ on a~mani\-fold $M$, Hepworth showed that the category $\bbx\left([M/G]\right)$ of vector fields on the stack $[M/G]$ is equivalent to the category $\bbx(G\times M\toto M)$ given in Definition~\ref{Def:GpoidIVFs} \cite[Proposition~5.1]{H09}.
\end{Remark}

In the following definition we highlight what it means for two vector fields to be isomorphic in the groupoid $\bbx(G\times M\toto M)$ of equivariant vector fields:

\begin{Definition}[isomorphic vector fields]\label{Def:IsoVFs}
Two equivariant vector fields $X$ and $Y$ on a $G$-manifold $M$ are {\it $G$-isomorphic} if there exists an infinitesimal gauge transformation~$\psi$ in the space $C^\infty(M,\ffg)^G$ such that
\begin{gather*}
Y=X+\psi_M.
\end{gather*}
\end{Definition}

As noted in \cite[Corollary~2.8]{L15}, isomorphisms of equivariant vector fields preserve relative equilibria in the following sense:

\begin{Lemma}\label{Lemma:SameRelEqs}
Let $X$ and $Y$ be two isomorphic equivariant vector fields on a $G$-manifold $M$. If a point $m$ is a relative equilibrium of $X$ then it is a relative equilibrium of $Y$.
\end{Lemma}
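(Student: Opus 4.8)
The plan is to unwind the definition of isomorphic vector fields and reduce the claim to the observation that the ``correction term'' added to $X$ is everywhere tangent to the group orbits. Since $X$ and $Y$ are $G$-isomorphic, Definition~\ref{Def:IsoVFs} furnishes an infinitesimal gauge transformation $\psi\in C^\infty(M,\ffg)^G$ with $Y=X+\psi_M$. Evaluating at the point $m$ gives
\begin{gather*}
Y(m)=X(m)+\psi_M(m),
\end{gather*}
so it suffices to show that both summands lie in the subspace $T_m(G\cdot m)\subseteq T_mM$, because that subspace is closed under addition.

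First I would handle $X(m)$: by hypothesis $m$ is a relative equilibrium of $X$, so by Definition~\ref{Def:RelEq} the vector $X(m)$ is tangent to $G\cdot m$, i.e., $X(m)\in T_m(G\cdot m)$. The key step is to treat $\psi_M(m)$. Here I would observe that, although $\psi$ is a function $M\to\ffg$, its value $\psi(m)$ at the fixed point $m$ is a single element of $\ffg$, and the definition of $\psi_M$ gives
\begin{gather*}
\psi_M(m)=\frac{\d}{\d t}\Big|_0\exp(t\psi(m))\cdot m=\bigl(\psi(m)\bigr)_M(m).
\end{gather*}
That is, $\psi_M(m)$ is the value at $m$ of the fundamental vector field generated by the constant Lie algebra element $\psi(m)$.

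Using the characterization $T_m(G\cdot m)=\{\nu_M(m)\mid\nu\in\ffg\}$ recalled after Definition~\ref{Def:Velocities}, and taking $\nu=\psi(m)$, I would conclude $\psi_M(m)\in T_m(G\cdot m)$. Hence $Y(m)$ is a sum of two vectors in $T_m(G\cdot m)$, so $Y(m)\in T_m(G\cdot m)$, which is exactly the statement that $m$ is a relative equilibrium of $Y$. I do not expect a genuine obstacle here: the only point requiring care is recognizing that evaluation at a fixed $m$ turns the gauge transformation $\psi$ into a fixed velocity $\psi(m)$, so that $\psi_M(m)$ is automatically tangent to the orbit. Finally, the symmetry of the argument in $X$ and $Y$, via the relation $X=Y+(-\psi)_M$, makes clear that the property is in fact symmetric, even though only one direction is asserted.
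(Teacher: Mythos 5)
Your proposal is correct and follows essentially the same route as the paper: write $Y=X+\psi_M$, note $X(m)\in T_m(G\cdot m)$ by hypothesis, and observe that $\psi_M(m)$ is tangent to the orbit (the paper phrases this as $\psi_M(m)$ being the derivative of a curve lying in $G\cdot m$, while you identify it as the fundamental vector field of $\psi(m)$ evaluated at $m$ --- the same observation). No gaps.
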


\begin{proof}
Since $X$ and $Y$ are isomorphic, there exists a map $\psi\in C^\infty(M,\ffg)^G$ such that $Y=X+\psi_M$. Note that the vector $X(m)$ is tangent to the group orbit $G\cdot m$ since the point $m$ is a relative equlibrium of~$X$. The vector $\psi_M(m)$ is also tangent to the group orbit $G\cdot m$ since the vector $\psi_M(m)$ is defined to be the derivative of a curve on the group orbit of the point~$m$. Thus, we have
\begin{gather*}
Y(m)=X(m)+\psi_M(m)\in T_m(G\cdot m),
\end{gather*}
meaning the point $m$ is a relative equilibrium of the vector field $Y$.
\end{proof}

We will use the following vector fields in our application of Theorem \ref{Thm:MainTheorem1} to Hamiltonian relative equilibria in Section \ref{HamStability}:

\begin{Definition}[augmented vector fields]\label{Def:AugmentedVFs}
Let $M$ be a proper $G$-manifold and $X$ an equivariant vector field on $M$. Given a vector $\xi\in\ffg$, the corresponding {\it vector field augmented by} $\xi$ is the vector field
\begin{gather*}
X_\xi\colon \ M\to TM, \qquad X_\xi:=X-\xi_M.
\end{gather*}
\end{Definition}

\begin{Remark}
Given a $G$-equivariant vector field $X$ on a proper $G$-manifold $M$, the corresponding augmented vector field $X_\xi$ is not $G$-equivariant.
However, it is equivariant with respect to the Lie subgroup
\begin{gather*}
G_\xi:=\{g\in G \,|\, \Ad(g)\xi=\xi\}.
\end{gather*}
Also note, that if $\xi\in\ffg$ is a velocity for a $G$-relative equilibrium $m$ of the vector field $X$, then the augmented vector field $X_\xi$ has an equilibrium at the point $m$.
\end{Remark}

\begin{Lemma}\label{Lemma:AugIso}
Let $M$ be a proper $G$-manifold, let $X$ be an equivariant vector field on $M$, and let $\xi$ be a given vector in the Lie algebra $\ffg$ of $G$.
The vector field $X$ is $G_\xi$-isomorphic to its augmented vector field $X_\xi\in\Gamma(TM)^{G_\xi}$.
\end{Lemma}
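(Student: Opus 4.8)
The plan is to produce the isomorphism by hand, exhibiting a single explicit infinitesimal gauge transformation for the subgroup $G_\xi$. Applying Definition~\ref{Def:IsoVFs} to the group $G_\xi$ in place of $G$, it suffices to find a map $\psi$ in the space $C^\infty(M,\ffg_\xi)^{G_\xi}$ with $X_\xi=X+\psi_M$; since $X_\xi=X-\xi_M$ by Definition~\ref{Def:AugmentedVFs}, this reduces to arranging $\psi_M=-\xi_M$. Before doing so I would observe that both fields in question are objects of the groupoid $\bbx(G_\xi\times M\toto M)$: the field $X$ is $G$-equivariant, hence $G_\xi$-equivariant because $G_\xi\subseteq G$, while $X_\xi\in\Gamma(TM)^{G_\xi}$ was already recorded in the preceding remark.

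The natural candidate is the constant map $\psi\equiv-\xi$. The identity $\psi_M=-\xi_M$ is then immediate from the definition of the induced vector field: for every $m\in M$,
\begin{gather*}
\psi_M(m)=\frac{\d}{\d t}\Big|_0\exp(t\psi(m))\cdot m=\frac{\d}{\d t}\Big|_0\exp(-t\xi)\cdot m=-\xi_M(m),
\end{gather*}
so that $X+\psi_M=X-\xi_M=X_\xi$, exactly as required.

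The one point that genuinely uses the hypothesis — and so is the closest thing to an obstacle — is verifying that this constant map really is an infinitesimal gauge transformation for $G_\xi$, that is, $\psi\in C^\infty(M,\ffg_\xi)^{G_\xi}$. This splits into two checks. First, $-\xi$ must lie in the Lie algebra $\ffg_\xi$ of $G_\xi$; this holds because $\Ad(\exp(t\xi))\xi=\xi$ for all $t$, so $\xi$, and hence $-\xi$, belongs to $\ffg_\xi$. Second, $\psi$ must satisfy the equivariance condition $\psi(g\cdot m)=\Ad(g)\psi(m)$ for all $g\in G_\xi$; since $\psi$ is constant this collapses to $-\xi=\Ad(g)(-\xi)$, which is precisely the defining relation $\Ad(g)\xi=\xi$ of the subgroup $G_\xi$. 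This also explains why the statement is phrased for $G_\xi$ and not for all of $G$: the constant map $-\xi$ is generally not $G$-equivariant, so the augmented field is only isomorphic to $X$ after restricting the symmetry group to $G_\xi$.

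With $\psi\equiv-\xi$ in hand and the two membership checks complete, Definition~\ref{Def:IsoVFs} yields the desired $G_\xi$-isomorphism between $X$ and $X_\xi$. I do not anticipate any further difficulty; the entire content of the argument lies in selecting the correct constant gauge transformation and confirming that it is adapted to the stabilizer $G_\xi$.
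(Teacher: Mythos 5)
Your proposal is correct and takes essentially the same approach as the paper, which uses the constant map $\xi$ (the inverse of your $-\xi$, written as $X=X_\xi+\xi_M$) as the infinitesimal gauge transformation in $C^\infty(M,\ffg_\xi)^{G_\xi}$. Your version merely spells out the membership and equivariance checks in more detail.
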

\begin{proof}
Let $\ffg_\xi$ be the Lie algebra of the Lie subgroup $G_\xi$.
The constant map
\begin{gather*}
\xi\colon \ M\to\ffg_\xi, \qquad m\mapsto \xi
\end{gather*}
is a smooth $\Ad(G_\xi)$-equivariant map, and hence gives a morphism of the groupoid of $G_\xi$-equivariant vector fields $\bbx(G_\xi\times M\toto M)$. Note $X=X_\xi+\xi_M$ by definition, so the result follows.
\end{proof}

Recall we can assemble the maximal integral curves of a smooth vector field on a Hausdorff manifold into a maximal flow:

\begin{Definition}[Flow]\label{Def:Flow}
Let $M$ be a Hausdorff manifold and let $X$ be a smooth vector field on $M$.
For every point $m\in M$, let $\gamma_m\colon I_m\to M$ be the maximal integral curve of $X$ such that $\gamma_m(0)=m$.
Let $A$ be the open subset of $\bbr\times M$ defined by
\begin{gather*}
A:=\bigcup_{m\in M}I_m\times\{m\}.
\end{gather*}
The {\it maximal flow}, or just {\it flow}, of the vector field $X$ is the smooth map
\begin{gather*}
\phi\colon \ A\to M, \qquad \phi(t,m):=\gamma_m(t).
\end{gather*}
The set $A$ is called the {\it flow domain} of $\phi$.
\end{Definition}

\begin{Remark}It is important to recall that we are assuming all manifolds are Hausdorff, this is required for some of the definitions and results in this paper. From now on, we won't explicitly mention this hypothesis.
\end{Remark}

The following result, due to Lerman, relates the flows of isomorphic vector fields:

\begin{Theorem}[{Lerman \cite[Theorem~1.6]{L15}}]\label{Thm:IsoFlows}
Let $M$ be a proper $G$-manifold and let $X$ and $Y$ be two isomorphic equivariant vector fields on $M$. Then there exists a family of smooth maps $\{F_t\colon M\to G\}$ depending smoothly on $t$ so that the maximal flows $\phi^X$ and $\phi^Y$, of $X$ and $Y$ respectively, satisfy
\begin{gather*}
\phi^X(t,m)=F_t(m)\cdot \phi^Y(t,m)
\end{gather*}
for all $(t,m)\in\bbr\times M$ in the domain of the flow $\phi^X$.
\end{Theorem}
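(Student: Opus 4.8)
The plan is to show that the two flows differ by a time-dependent gauge transformation valued in $G$, and to construct the family $\{F_t\}$ by solving an ODE on the group. Since $X$ and $Y$ are isomorphic, Definition~\ref{Def:IsoVFs} gives an infinitesimal gauge transformation $\psi\in C^\infty(M,\ffg)^G$ with $Y=X+\psi_M$. The key idea is that $\psi$ should generate the correction factor: I would like $F_t(m)$ to be, roughly, the group element obtained by integrating $\psi$ along the $Y$-trajectory starting at $m$. Concretely, I would posit the ansatz $\phi^X(t,m)=F_t(m)\cdot\phi^Y(t,m)$ and differentiate in $t$ to discover what equation $F_t$ must satisfy, then reverse the logic: define $F_t$ as the solution of that equation and verify the ansatz.

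First I would differentiate the candidate relation in $t$. Writing $\phi^Y(t,m)=:q(t)$ so that $\dot q(t)=Y(q(t))=X(q(t))+\psi_M(q(t))$, and writing the left side as $F_t(m)\cdot q(t)$, the chain rule produces two terms: one from differentiating the $G$-action in the group slot (an $\dot F_t F_t^{-1}$ fundamental-vector-field term) and one from differentiating in the manifold slot, which contributes $\d(F_t(m))_{q(t)}\,\dot q(t)$. I want the result to equal $X\bigl(F_t(m)\cdot q(t)\bigr)$, and here the $G$-equivariance of $X$ is essential: it lets me move $X$ through the diffeomorphism $(F_t(m))_M$. Matching terms, the $\psi_M$ piece must be cancelled by the group-slot derivative. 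This forces the logarithmic derivative of $F_t$ to be governed by $\psi$ evaluated along the $Y$-flow, i.e.\ an equation of the schematic form $\tfrac{\d}{\d t}F_t(m)=-\bigl(T_eR_{F_t(m)}\bigr)\psi\bigl(\phi^Y(t,m)\bigr)$ (with the precise left/right translation and sign dictated by the calculation). This is a smoothly parametrized, time-dependent right-invariant ODE on $G$ with initial condition $F_0(m)=e$, so it has a unique maximal solution depending smoothly on $(t,m)$ by smooth dependence on parameters and initial conditions.

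Second, having defined $F_t$ as this solution, I would verify that $t\mapsto F_t(m)\cdot\phi^Y(t,m)$ is an integral curve of $X$ through $m$: by construction its time derivative equals $X$ evaluated at the curve, and at $t=0$ it passes through $m$. By uniqueness of integral curves of $X$, it must coincide with $\phi^X(\cdot,m)$ on the common interval of definition, which gives the asserted identity. The equivariance of $\psi$ (that it lands in $C^\infty(M,\ffg)^G$) should be what makes the whole construction compatible with the $G$-action and keeps the correction inside the orbit directions, consistent with Lemma~\ref{Lemma:SameRelEqs}.

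The main obstacle I expect is the bookkeeping in the differentiation step: correctly handling the derivative of the $G$-action when \emph{both} the group element $F_t(m)$ and the base point $\phi^Y(t,m)$ vary with $t$, and getting the right-invariant ODE on $G$ with its exact translations and signs. A secondary subtlety is the domain matching, ensuring the constructed curve is defined on all of $I_m$ for the $X$-flow, and that the smooth dependence of $F_t$ on $m$ survives; since the action is proper and the data are smooth, this should follow from standard ODE theory, but it must be stated carefully rather than asserted.
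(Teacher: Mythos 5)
The paper itself offers no proof of this statement---it is quoted directly from Lerman \cite{L15}---but your proposal is correct and is essentially the argument of that cited source: posit the ansatz $\phi^X(t,m)=F_t(m)\cdot\phi^Y(t,m)$, differentiate, derive a logarithmic-derivative ODE on $G$ driven by $\psi$, solve it with $F_0(m)=e$, and conclude by uniqueness of integral curves of $X$. The two details left schematic do resolve as you anticipate: using $\d (g)_M\psi_M(p)=(\Ad(g)\psi(p))_M(g\cdot p)=\psi_M(g\cdot p)$ (equivariance of $\psi$), the ODE comes out in \emph{left}-translated form $(\d L_{F_t(m)^{-1}})\frac{\d}{\d t}F_t(m)=-\psi\big(\phi^Y(t,m)\big)$ when $\psi$ is evaluated along the $Y$-flow (your right-translated version pairs instead with $\psi$ evaluated along the $X$-flow), and equality of the two flow domains follows by running the same construction with the roles of $X$ and $Y$ exchanged and $\psi$ replaced by $-\psi$.
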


We recall the following notion of continuous flows on topological spaces:

\begin{Definition}\label{Def:AbsFlow}
Let $Z$ be a topological space, and let $B$ be an open subset of $\bbr\times Z$ containing the set $\{0\}\times Z$. An {\it abstract flow} on $Z$ is a continuous map $\Phi\colon B\to Z$ satisfying:
\begin{enumerate}\itemsep=0pt
\item[1)] $\Phi(0,z)=z$ for all $z\in Z$;
\item[2)] $\Phi(t,\Phi(s,z))=\Phi(s+t,z)$ whenever both sides make sense.
\end{enumerate}
For a given point $z\in Z$, the {\it curve of the abstract flow starting at $z$} is the curve $\gamma_z\colon I_z\to Z$ defined by $\gamma_z(t)=\Phi(t,z)$, where $I_z$ consists of all times $t$ for which $(t,z)\in B$.
\end{Definition}

We recall the following standard result about $G$-equivariant vector fields:

\begin{Lemma}\label{Lemma:InducedOrbitFlow}
Let $M$ be a proper $G$-manifold and let $X$ be an equivariant vector field on $M$. The flow $\phi\colon A\to M$ of the vector field $X$ induces an abstract flow $\Phi\colon B\to M/G$ on the orbit space $M/G$ such that the following diagram commutes:
\begin{gather}\label{Diag:OrbitFlow}
\xy
(-14,10)*+{A}="1";
(12,10)*+{M}="2";
(-14,-6)*{B}="3";
(12,-6)*+{M/G,}="4";
{\ar@{->}^{\phi} (-12,10);(8,10)};
{\ar@{->}_{\id\times\pi} (-14,7.5);(-14,-3)};
{\ar@{->}^{\pi} "2";"4"};
{\ar@{->}_{\Phi} "3";"4"};
\endxy
\end{gather}
where $\pi\colon M\to M/G$ is the orbit map.
\end{Lemma}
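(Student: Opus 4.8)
The plan is to obtain the abstract flow $\Phi$ by descending the flow $\phi$ along the orbit map $\pi$, the $G$-equivariance of $X$ supplying exactly the compatibility that makes this descent possible. First I would establish that $\phi$ is itself $G$-equivariant. Fixing $g\in G$ and a point $m$, the curve $t\mapsto g\cdot\phi(t,m)$ has derivative $\d g_M\big(X(\phi(t,m))\big)=X\big(g\cdot\phi(t,m)\big)$ by the equivariance of $X$ in Definition~\ref{Def:InvVF}, so it is an integral curve of $X$ through $g\cdot m$. Uniqueness of maximal integral curves then yields both $I_{g\cdot m}=I_m$ and $\phi(t,g\cdot m)=g\cdot\phi(t,m)$ for every $t\in I_m$.

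The equality $I_{g\cdot m}=I_m$ shows that the flow domain $A$ is saturated with respect to $\id\times\pi$: each fiber $\{t\}\times(G\cdot m)$ is either contained in $A$ or disjoint from it, so that $A=(\id\times\pi)^{-1}\big((\id\times\pi)(A)\big)$. I would then set $B:=(\id\times\pi)(A)$ and define $\Phi\big(t,\pi(m)\big):=\pi\big(\phi(t,m)\big)$. The relation $\phi(t,g\cdot m)=g\cdot\phi(t,m)$ makes this well defined on orbits, and the commutativity of diagram~\eqref{Diag:OrbitFlow} is then immediate, being nothing but the identity $\Phi\circ(\id\times\pi)=\pi\circ\phi$. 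Since $0\in I_m$ for every $m$, the set $B$ contains $\{0\}\times M/G$.

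The two abstract flow axioms of Definition~\ref{Def:AbsFlow} transfer directly from the corresponding properties of $\phi$: one has $\Phi\big(0,\pi(m)\big)=\pi\big(\phi(0,m)\big)=\pi(m)$, and $\Phi\big(t,\Phi(s,\pi(m))\big)=\pi\big(\phi(t,\phi(s,m))\big)=\pi\big(\phi(s+t,m)\big)=\Phi\big(s+t,\pi(m)\big)$ wherever both sides are defined, using the semigroup property of the flow $\phi$.

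The step I expect to be the real obstacle is the topology, namely that $B$ is open and that $\Phi$ is continuous. Here I would invoke the standard fact that the orbit map $\pi$ of a group action is open, so that $\id\times\pi$ is a continuous open surjection and hence a quotient map; restricting to the saturated open set $A$ then gives an open quotient map $\id\times\pi\colon A\to B$ onto the open set $B$. Openness of $B$ together with the identity $\Phi\circ(\id\times\pi)=\pi\circ\phi$, whose right-hand side is continuous, forces $\Phi$ to be continuous by the universal property of quotient maps. It is this topological step, rather than the purely dynamical content, where the hypotheses on the action are genuinely used.
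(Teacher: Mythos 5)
Your proposal is correct and follows essentially the same route as the paper's own proof: descend $\phi$ through the quotient map $\id\times\pi$ using the equivariance of the flow, verify that the flow domain $A$ is saturated so that $B$ is open, and obtain continuity of $\Phi$ from the universal property of the quotient topology via the identity $\Phi\circ(\id\times\pi)=\pi\circ\phi$. The only (harmless) difference is that you are slightly more explicit about invoking the openness of the orbit map to conclude that $B$ is open.
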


\begin{proof}
First, define the set $B:=(\id\times\pi)(A)\subseteq \bbr\times M/G$ and the map
\begin{gather*}
\Phi\colon\ B\to M/G, \qquad \Phi(t,\pi(m)):=\pi(\phi(t,m)).
\end{gather*}
We want to show that the map $\Phi$ is our desired abstract flow. Thus, we need to show that the set $B$ is open, that $B$ contains $\{0\}\times M/G$, that the map $\Phi$ is well-defined and continuous, that~$\Phi$ makes the diagram~(\ref{Diag:OrbitFlow}) commute, and that $\Phi$ satisfies properties~(1) and~(2) in Definition~\ref{Def:AbsFlow}.

Observe that the action of the Lie group $G$ on the manifold $M$ gives an action on the product $\bbr\times M$ by
\begin{gather*}
g\cdot (t,m) := (t,g\cdot m)
\end{gather*}
for all $g\in G$ and $(t,m)\in\bbr\times M$. The orbit space of this action is the product $\bbr\times M/G$ and the quotient map is $\id\times\pi\colon \bbr\times M\to\bbr\times M/G$, where $\pi\colon M\to M/G$ is the quotient map of the given action. To see that the set $B$ is open, it suffices to check that the open set $A$ is saturated with respect to the quotient map $\id\times\pi$, or equivalently that it is $G$-invariant with respect to the action of $G$ on the product $\bbr\times M$. For this, let $(t,m)\in A$ and note, using the equivariance of the vector field $X$, that the curve given by $t\mapsto g\cdot \phi(t,m)$ is the maximal integral curve of~$X$ starting at~$g\cdot m$. In particular, it is defined for the same times $t$ that the integral curve starting at the point $m$ is defined. Thus, if $(t,m)\in A$ then $(t,g\cdot m)\in A$, or equivalently the flow domain~$A$ is $G$-invariant. Furthermore, note that $(\id\times\pi)(\{0\}\times M)=\{0\}\times M/G$.
Hence,
\begin{gather*}
\{0\}\times M/G=(\id\times\pi)(\{0\}\times M)\subseteq (\id\times\pi)(A)=B
\end{gather*}
as desired.

Next, note that the map $\Phi$ is well-defined by the equivariance of the flow $\phi$. Furthermore, the map $\Phi$ makes the square~(\ref{Diag:OrbitFlow}) commute by definition. By the characteristic property of the quotient topology, the map $\Phi$ is continuous if and only if the map $\Phi\circ (\text{id}\times \phi)$ is continuous. Since the diagram (\ref{Diag:OrbitFlow}) commutes, we have that $\Phi\circ (\text{id}\times \phi)=\pi\circ\phi$. Since $\pi\circ\phi$ is the composition of continuous maps, then the map $\Phi$ is continuous.

Now, for every point $\pi(m)\in M/G$ we have
\begin{gather*}
\Phi(0,\pi(m))=\pi(\phi(0,m))=\pi(m)
\end{gather*}
since $\phi(0,m)=m$. Similarly, the second property follows by the corresponding property of the flow $\phi$
\begin{gather*}
\Phi\big(t,\Phi(s,\pi(m))\big)
=\Phi\big(t,\pi(\phi(s,m))\big)
=\pi\big(\phi(t,\phi(s,m))\big)\\
\hphantom{\Phi\big(t,\Phi(s,\pi(m))\big)}{}
=\pi\big(\phi(s+t,m)\big)
=\Phi\big(s+t,\pi(m)\big).
\end{gather*}
Hence, the map $\Phi\colon B\to M/G$ is the desired abstract flow.
\end{proof}

The following is a corollary of Theorem \ref{Thm:IsoFlows} and Lemma \ref{Lemma:InducedOrbitFlow} (also see \cite[Corollary~2.8]{L15}):

\begin{Corollary}\label{Cor:SameOrbitFlow}
Let $M$ be a proper $G$-manifold, and let $X$ and $Y$ be two isomorphic equivariant vector fields on $M$.
Then the maximal flows of $X$ and $Y$ have the same domain and induce the same abstract flow on the orbit space $M/G$.
\end{Corollary}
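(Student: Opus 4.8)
The plan is to leverage the two results just established: Lerman's comparison of flows (Theorem~\ref{Thm:IsoFlows}) to handle the flow domains, and the $G$-invariance of the orbit map in the construction of Lemma~\ref{Lemma:InducedOrbitFlow} to identify the induced abstract flows. First I would record that the relation ``$X$ and $Y$ are isomorphic'' is symmetric: if $Y=X+\psi_M$ for some $\psi\in C^\infty(M,\ffg)^G$, then $X=Y+(-\psi)_M$ with $-\psi$ again an infinitesimal gauge transformation (Definition~\ref{Def:IsoVFs}), so Theorem~\ref{Thm:IsoFlows} applies equally to the ordered pairs $(X,Y)$ and $(Y,X)$.

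To establish that the flow domains agree, denote by $A^X$ and $A^Y$ the flow domains of $\phi^X$ and $\phi^Y$. Theorem~\ref{Thm:IsoFlows} produces maps $\{F_t\colon M\to G\}$ with $\phi^X(t,m)=F_t(m)\cdot\phi^Y(t,m)$ for all $(t,m)\in A^X$; since the right-hand side only makes sense where $\phi^Y$ is defined, this forces $A^X\subseteq A^Y$. Applying the theorem to the pair $(Y,X)$ yields the reverse inclusion $A^Y\subseteq A^X$, whence $A^X=A^Y$. Writing $\pi\colon M\to M/G$ for the orbit map, the domains of the induced abstract flows are $B^X=(\id\times\pi)(A^X)$ and $B^Y=(\id\times\pi)(A^Y)$ by Lemma~\ref{Lemma:InducedOrbitFlow}, and these coincide because $A^X=A^Y$.

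It then remains to compare the abstract flows $\Phi^X$ and $\Phi^Y$ on their common domain $B:=B^X=B^Y$. The key observation is that $\pi$ is constant on $G$-orbits, so for every $g\in G$ and $p\in M$ we have $\pi(g\cdot p)=\pi(p)$. Fixing $(t,\pi(m))\in B$ and using the defining formula of Lemma~\ref{Lemma:InducedOrbitFlow} together with Theorem~\ref{Thm:IsoFlows}, I would compute
\begin{gather*}
\Phi^X(t,\pi(m))=\pi\big(\phi^X(t,m)\big)=\pi\big(F_t(m)\cdot\phi^Y(t,m)\big)=\pi\big(\phi^Y(t,m)\big)=\Phi^Y(t,\pi(m)),
\end{gather*}
the third equality being exactly the $G$-invariance of $\pi$. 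Since this holds for all points of $B$, the two induced abstract flows are equal.

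The argument is short because the hard analytic content is already packaged into Theorem~\ref{Thm:IsoFlows}; the only genuinely substantive step is the equality of domains, where one must invoke the symmetry of the isomorphism relation to obtain both inclusions rather than merely $A^X\subseteq A^Y$. Everything after that is a formal consequence of the $G$-invariance of the orbit projection.
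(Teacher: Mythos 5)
Your proposal is correct and follows essentially the same route as the paper: use Theorem~\ref{Thm:IsoFlows} in both directions (noting the symmetry of the isomorphism relation) to get equality of flow domains, then push the relation $\phi^X(t,m)=F_t(m)\cdot\phi^Y(t,m)$ through the orbit map $\pi$, whose $G$-invariance kills the factor $F_t(m)$. The only cosmetic difference is that you derive the domain inclusion from the requirement that the right-hand side be defined, while the paper reads the inclusion off directly; both are valid and both still need the role-reversal step you correctly include.
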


\begin{proof}
Let $\phi^X$ an $\phi^Y$ be the maximal flows of the vector fields $X$ and $Y$ respectively.
Let $\{F_t\colon M\to G\}$ be the family of maps relating the flows (see Theorem \ref{Thm:IsoFlows}).
Thus, for all pairs $(t,m)$ in the domain of the flow $\phi^Y$ we have
\begin{gather}\label{Eq:IsoFlows}
\phi^X(t,m)=F_t(m)\cdot \phi^Y(t,m).
\end{gather}
In particular, note that any pair $(t,m)$ in the domain of the flow $\phi^Y$ is in the domain of the flow $\phi^X$.
Reversing the role of $X$ and $Y$ in Theorem \ref{Thm:IsoFlows} gives the opposite inclusion of the flow domains.
Hence, the flows $\phi^X$ and $\phi^Y$ have the same domain.

Now let $\Phi^X\colon B\to M/G$ and $\Phi^Y\colon B\to M/G$ be the induced flows on the orbit space of $X$ and $Y$ respectively.
Using equality (\ref{Eq:IsoFlows}) and the definition of the induced orbit space flow given in the proof of Lemma \ref{Lemma:InducedOrbitFlow}, we have that
\begin{gather*}
\Phi^X\big(t,\pi(m)\big)
=\pi\big(\phi^X(t,m)\big)
=\pi\big(F_t(m)\cdot \phi^Y(t,m)\big)
=\pi\big(\phi^Y(t,m)\big)
=\Phi^Y\big(t,\pi(m)\big).
\end{gather*}
Hence, the induced flows on the orbit space are equal.
\end{proof}

Abstract flows can also have fixed points:

\begin{Definition}[fixed point of an abstract flow]\label{Def:AbsFlowFixedPoint}
Let $\phi\colon A\to Z$ be an abstract flow on a~topological space $Z$. A {\it fixed point} of the flow $\phi$ is a~point $z\in Z$ such that $\phi(t,z)=z$ for all times $t$ with $(t,z)\in A$.
\end{Definition}

\begin{Remark}
Let $M$ be a proper $G$-manifold and let $\pi\colon M\to M/G$ be the quotient map. Observe that if $X$ is an equivariant vector field on $M$, then a~point $m\in M$ is a relative equilibrium of $X$ if and only if the point $\pi(m)\in M/G$ is a fixed point of the induced abstract flow on the orbit space.
\end{Remark}

We proceed to describe Krupa's decomposition following Lerman~\cite{L15}. We begin by recalling saturations and equivariant extension:

\begin{Definition}[saturation]\label{Def:Saturation}
Given a $G$-manifold $M$ and a subset $A\subseteq M$, the {\it saturation} of~$A$ is the subset of $M$ defined by $G\cdot A:=\{g\cdot a \,|\, g\in G,\, a\in A\}$.
\end{Definition}

Recall that equivariant maps out of regular submanifolds of a proper $G$-manifold have unique equivariant extensions to the saturation of the submanifold, provided some additional hypotheses as in the following standard lemma (see also \cite[Lemma~2.10.1]{F07}):

\begin{Lemma}\label{Lemma:EquivExtension}
Let $M$ and $N$ be proper $G$-manifolds, let $K$ be a Lie subgroup of $G$, let $A$ be a~$K$-invariant regular submanifold of $M$, and let $f\colon A\to N$ be a $K$-equivariant map. Suppose that the map $G\times A\to G\cdot A$ given by $(g,a)\mapsto g\cdot a$ descends to a diffeomorphism from the associated bundle $G\times^KA$ to the saturation $G\cdot A$. Then there exists a unique $G$-equivariant extension of the map $f$ given by
\begin{gather*}
\varepsilon f\colon \ G\cdot A\to N, \qquad \varepsilon f (g\cdot a):=g\cdot f(a).
\end{gather*}
\end{Lemma}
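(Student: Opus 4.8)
The plan is to obtain $\varepsilon f$ as a composite built from the hypothesized diffeomorphism, so that both well-definedness and smoothness come for free. Write $\Psi\colon G\times^K A\to G\cdot A$, $\Psi[g,a]=g\cdot a$, for this diffeomorphism (in particular I regard $G\cdot A$ as a manifold via $\Psi$). First I would introduce the auxiliary map
\[
\tilde f\colon\ G\times A\to N,\qquad \tilde f(g,a):=g\cdot f(a),
\]
which is smooth, being the composite of $f$ with the action map of $G$ on $N$, and then define $\varepsilon f:=\hat f\circ\Psi^{-1}$, where $\hat f\colon G\times^K A\to N$ is the factorization of $\tilde f$ through the quotient projection $q\colon G\times A\to G\times^K A$.

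The key step is to produce this factorization $\hat f$ with $\hat f[g,a]=g\cdot f(a)$. For this I would verify that $\tilde f$ is invariant under the $K$-action $k\cdot(g,a)=(gk^{-1},k\cdot a)$ defining the associated bundle: using that $f$ is $K$-equivariant,
\[
\tilde f(gk^{-1},k\cdot a)=gk^{-1}\cdot f(k\cdot a)=gk^{-1}\cdot k\cdot f(a)=g\cdot f(a)=\tilde f(g,a).
\]
Since the $K$-action on $G\times A$ is free and proper, $q$ is a principal $K$-bundle projection, hence a surjective submersion, and the $K$-invariant smooth map $\tilde f$ descends to a unique smooth map $\hat f$ with $\hat f\circ q=\tilde f$. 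This is precisely where the hypothesis that $f$ is $K$-equivariant is used.

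It then remains to read off the conclusions. The map $\varepsilon f=\hat f\circ\Psi^{-1}$ is smooth because $\Psi^{-1}$ is smooth ($\Psi$ being a diffeomorphism) and $\hat f$ is smooth, and evaluating gives $\varepsilon f(g\cdot a)=\hat f[g,a]=g\cdot f(a)$; in particular the stated formula is unambiguous, the injectivity of $\Psi$ being exactly what makes $\Psi^{-1}(g\cdot a)=[g,a]$ well defined. Setting $g=e$ shows $\varepsilon f$ restricts to $f$ on $A$. For $G$-equivariance, given $g'\in G$ and a point $x=g\cdot a$, I would compute $\varepsilon f(g'\cdot x)=\varepsilon f((g'g)\cdot a)=(g'g)\cdot f(a)=g'\cdot\varepsilon f(x)$. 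Uniqueness is immediate: if $h\colon G\cdot A\to N$ is any $G$-equivariant extension of $f$, then for every point $g\cdot a$ of $G\cdot A$ we have $h(g\cdot a)=g\cdot h(a)=g\cdot f(a)=\varepsilon f(g\cdot a)$, so $h=\varepsilon f$.

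I expect the only genuine obstacle to be smoothness. The set-theoretic well-definedness, the extension property, the $G$-equivariance, and uniqueness are all one-line checks; the real content of the lemma is that the naive formula $g\cdot a\mapsto g\cdot f(a)$ actually defines a smooth map, and this is exactly what the diffeomorphism hypothesis is tailored to supply — its injectivity resolves well-definedness, while its smooth inverse, together with the submersion $q$, resolves regularity. Were $\Psi$ only a bijection one would obtain merely a set-theoretic extension.
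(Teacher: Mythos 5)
Your proposal is correct and follows essentially the same route as the paper's proof: form the smooth map $(g,a)\mapsto g\cdot f(a)$ on $G\times A$, use the $K$-equivariance of $f$ to see it is invariant for the $K$-action defining the associated bundle, descend it to $G\times^K A$, transport it along the hypothesized diffeomorphism, and check uniqueness by the same one-line equivariance computation. You merely spell out more explicitly than the paper why the descended map is smooth (the quotient is a surjective submersion), which is a welcome but inessential elaboration.
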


\begin{proof} Define the $G$-equivariant map
\begin{gather*}
G\times A \to N, \qquad (g,a)\mapsto g\cdot f(a).
\end{gather*}
By using the $K$-equivariance of the map $f$, note that this map is $K$-invariant with respect to the action of $K$ on $G\times A$. Thus, this map descends to a smooth $G$-equivariant map
\begin{gather*}
G\times ^K A \to N, \qquad [g,a]\mapsto g\cdot f(a).
\end{gather*}
Using the diffeomorphism between the associated bundle $G\times ^KA$ and the saturation $G\cdot A$, we obtain the smooth extension $\varepsilon f$.
To see it is unique, suppose that $F\colon G\cdot A\to N$ is any other $G$-equivariant extension.
Then note $F(g\cdot a)=g\cdot F(a)=g\cdot f(a) = \varepsilon f(g\cdot a)$, and hence $F=\varepsilon f$.
\end{proof}

We recall the definition of a slice:

\begin{Definition}[slices]\label{Def:Slices}
Given a $G$-manifold $M$, let $G_m$ be the stabilizer of a point $m\in M$.
A~{\it slice} for the action through $m$ is a $G_m$-manifold $V$ and a $G_m$-equivariant embedding $j\colon V\to M$ such that
\begin{enumerate}\itemsep=0pt
\item[1)] the point $m$ is in the image $j(V)$;
\item[2)] the saturation $G\cdot j(V)$ is open in $M$;
\item[3)] the map
\begin{gather*}
G\times V \to G\cdot j(V), \qquad (g,v)\mapsto g\cdot j(v)
\end{gather*}
descends to a $G$-equivariant diffeomorphism
\begin{gather*}
G\times^{G_m}V\to G\cdot j(V), \qquad [g,v]\mapsto g\cdot j(v),
\end{gather*}
where $G\times^{G_m}V:=(G\times V)/G_m$ is the associated bundle.
\end{enumerate}
For the sake of conciseness, we often write $G\cdot V$ instead of $G\cdot j(V)$.
\end{Definition}

\begin{Remark}\label{Rem:SliceExistence}
It is a classic theorem of Palais \cite{Pa61} that slices exist for points in \textit{proper} $G$-manifolds (see also \cite[Theorem~2.3.3]{DK00}).
In proper $G$-manifolds, it is also possible and convenient to take the slice $V$ through a point $m$ to be an open ball around the origin of a vector space with a representation of the stabilizer $G_m$ (see, for example, \cite[Theorem~B.24]{GuK02}).
\end{Remark}

The following definition will be convenient for the sake of brevity:

\begin{Definition}[proper $G$-manifold with slice]\label{Def:PGManWSlice}
A {\it proper $G$-manifold with slice} is a quintuple $(M,G,m,V,j)$ consisting of a proper $G$-manifold $M$, a point $m$ on the manifold, and a~slice~$V$ for the action through the point $m$ with corresponding $G_m$-equivariant embedding $j\colon V\to M$.
\end{Definition}

\begin{Remark}\label{Rem:SliceFacts}
Let $(M,G,m,V,j)$ be a proper $G$-manifold with slice. The following facts will be important:
\begin{enumerate}\itemsep=0pt
\item The bundle $G\cdot V\to G\cdot m$ has typical fiber $V$ and is $G$-equivariantly diffeomorphic to the associated bundle $G\times^{G_m}V\to G/G_m$ (see, for example, \cite[Theorem~2.4.1]{DK00}).
In other words, the following diagram, with the canonical maps, commutes:
\begin{gather*}
\xy
(-14,10)*+{G\times^{G_m}V}="1";
(12,10)*+{G\cdot V}="2";
(-14,-6)*{G/G_m}="3";
(12,-6)*+{G\cdot m.}="4";
{\ar@{->}^{\cong} "1";"2"};
{\ar@{->} "1";"3"};
{\ar@{->} "2";"4"};
{\ar@{->}^{\cong} "3";"4"};
\endxy
\end{gather*}
We think of $G\cdot V$ as a tubular neighborhood of the group orbit $G\cdot m$ and often refer to it as a tube. Thus, the associated bundle $G\times^{G_m}V\to G/G_m$ serves as a model for the tubular neighborhood $G\cdot V$, and we will sometimes identify $G\times^{G_m}V$ and $G\cdot V$.
\item Definition \ref{Def:Slices} implies that the tangent space at the point $m$ splits in the form
\begin{gather*}
T_mM=T_m(G\cdot m)\oplus T_mj(V),
\end{gather*}
while for any point $v\in V$ we have
\begin{gather*}
T_{j(v)}M=T_{j(v)}(G\cdot j(v)) + T_{j(v)}j(V).
\end{gather*}
Definition \ref{Def:Slices} also implies that for any point $v\in V$, if a group element $g\in G$ is such that $g\cdot j(v)\in j(V)$ then $g\in G_m$.
\end{enumerate}
\end{Remark}

\begin{Remark}\label{Rem:ArbitraryModel}
By the previous remark, we can model tubes generated by slices by considering arbitrary associated bundles of the form $G\times ^KV$, where $K$ is a compact Lie subgroup of a Lie group $G$, and $V$ is an open ball around the origin in a vector space with a representation of~$K$.
For such models, note that the point $m:=[1,0]$ has as stabilizer the Lie subgroup $K$ acting on $G\times ^KV$ as a subgroup of $G$.
Therefore, the $K$-manifold $V$ with the $K$-equivariant embedding $j\colon V\hookrightarrow G\times^KV$ defined by $j(v):=[1,v]$, is a slice for the action through the point $m$.
\end{Remark}

\begin{Remark}\label{Rem:Groupoids}
A proper $G$-manifold with slice $(M,G,m,V,j)$ gives rise to two action groupoids, namely:
\begin{itemize}\itemsep=0pt
\item the action groupoid $G_m\times V\toto V$ of the slice;
\item the action groupoid $G\times(G\cdot V)\toto G\cdot V$ of the tube.
\end{itemize}
Thus, the choice of slice gives rise to two groupoids of equivariant vector fields in the sense of Definition \ref{Def:GpoidIVFs}:
\begin{itemize}\itemsep=0pt
\item the groupoid $\bbx(G_m\times V \toto V)$ of $G_m$-equivariant vector fields on the slice $V$;
\item the groupoid $\bbx(G\times G\cdot V \toto G\cdot V)$ of $G$-equivariant vector fields on the tube $G\cdot V$.
\end{itemize}
It is a theorem of Lerman that these groupoids are equivalent (see \cite[Theorem~1.16]{L15}).
This theorem was stated using $2$-term chains of topological vector spaces. In Theorem~\ref{Thm:LermanEquivalence} we state his result using an equivalent formulation.
\end{Remark}

Given a proper $G$-manifold with slice, we can use the embedding of the slice to push forward vector fields and infinitesimal gauge transformations onto the image of the slice. We can then extend these uniquely to the tube as in Lemma~\ref{Lemma:EquivExtension}. This assembles into a canonical functor as follows:

\begin{Definition}[equivariant extension functor]\label{Def:ExtensionFunctor}
Let $(M,G,m,V,j)$ be a proper $G$-manifold with slice. The {\it equivariant extension functor} is the functor
\begin{gather*}
\xy
(-25,10)*+{E\colon \ \bbx(G_m\times V\toto V)}="1";
(25,10)*+{\bbx(G\times G\cdot V\toto G\cdot V),}="2";
{\ar@{->} "1";"2"};
(-40,0)*+{\big(X}="3";
(-20,0)*+{Y\big)}="4";
{\ar@{->}^{(\psi,X)} "3";"4"};
(5,0)*+{\big(E(X)}="5";
(40,0)*+{E(Y)\big),}="6";
{\ar@{->}^{(E(\psi),E(X))} "5";"6"};
{\ar@{|->} (-10,0);(-5,0)};
\endxy
\end{gather*}
where for any vector field $X\in\Gamma(TV)^{G_m}$ we define
\begin{gather*}
E(X)\colon \ G\cdot V\to T(G\cdot V), \qquad E(X)\big(g\cdot j(v)\big):=\d (g\circ j)X(v),
\end{gather*}
and for any infinitesimal gauge transformation $\psi\in C^\infty(V,\ffg_m)^{G_m}$ we define
\begin{gather*}
E(\psi)\colon \ G\cdot V\to\ffg, \qquad E(\psi)(g\cdot j(v)):=\Ad(g)\psi(v).
\end{gather*}
\end{Definition}

\begin{Remark} The equivariant extension functor makes use of push-forwards by the slice embedding and of equivariant extension as in Lemma~\ref{Lemma:EquivExtension} at both the object and morphism level. Let $(M,G,m,V,j)$ be a proper $G$-manifold with slice. Using the notation of Lemma \ref{Lemma:EquivExtension}, the equivariant extension functor $E$ satisfies
\begin{gather*}
E(X)=\varepsilon (j_* X) \qquad\text{and}\qquad E(\psi)=\varepsilon (j_* \psi)
\end{gather*}
for any equivariant vector field $X$ and any infinitesimal gauge transformation $\psi$ on the slice. Furthermore, note that the image under the functor $E$ of the space of $G_m$-equivariant vector fields on the slice consists of the space of $G$-equivariant vertical vector fields on the bundle $G\cdot V\to G\cdot m$. That the image is contained in the space of $G$-equivariant vertical vector fields follows from the definition.
That the functor on objects is surjective onto the vertical vector fields can be shown by using the functor of Definition~\ref{Def:ProjectionFunctor}.
\end{Remark}

The functor $E$ is only part of the equivalence stated in Remark~\ref{Rem:Groupoids}. For a functor in the opposite direction we first need to obtain a connection via a choice of Lie algebra splitting, as follows:

\begin{Lemma}\label{Lemma:SplitConn}
Let $G$ be a Lie group with Lie algebra $\ffg$, let $K$ be a Lie subgroup with Lie algebra~$\ffk$, and let $A$ be a proper $K$-manifold.
Then a choice of $K$-equivariant splitting $\ffg=\ffk\oplus\ffq$ gives rise to a $G$-equivariant connection on the associated bundle $G\times^K A\to G/K$.
\end{Lemma}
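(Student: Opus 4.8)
The plan is to produce the connection concretely as a horizontal distribution on the total space $G\times^K A$ and verify it is $G$-invariant. Recall that a principal or associated-bundle connection is equivalent to a $G$-invariant choice of horizontal complement to the vertical bundle. Since $G\times^K A\to G/K$ is the associated bundle to the principal $K$-bundle $G\to G/K$, the cleanest approach is to first build a principal connection on $G\to G/K$ from the splitting $\ffg=\ffk\oplus\ffq$, and then transport it to the associated bundle by the standard construction.

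**Building the principal connection.** First I would use the splitting $\ffg=\ffk\oplus\ffq$ to define a connection one-form on the principal $K$-bundle $G\to G/K$. At the identity, let $\omega_1\colon \ffg\to\ffk$ be the projection $\pr_{\ffk}$ along $\ffq$; at an arbitrary point $g\in G$ define $\omega$ by right-translating, i.e.\ $\omega_g:=\omega_1\circ \d(R_{g^{-1}})_g$, where $R_g$ denotes right multiplication by $g$. I would check the two defining properties of a principal connection form: that $\omega$ reproduces the generators of the $K$-action (i.e.\ $\omega(\eta_G)=\eta$ for $\eta\in\ffk$, which is immediate since $\pr_{\ffk}$ is the identity on $\ffk$) and that $\omega$ is $K$-equivariant, $R_k^*\omega=\Ad(k^{-1})\omega$. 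The equivariance is exactly where the hypothesis that the splitting is $K$-equivariant, meaning $\Ad(k)\ffq=\ffq$ for all $k\in K$, gets used: it guarantees $\pr_{\ffk}\circ\Ad(k)=\Ad(k)\circ\pr_{\ffk}$, from which $R_k^*\omega=\Ad(k^{-1})\omega$ follows by a short computation. Note that since $G$ acts on $G\to G/K$ by left translations and $\omega$ is built from right translations, $\omega$ is automatically $G$-invariant (left- and right-translations commute), which is what will make the induced associated-bundle connection $G$-equivariant.

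**Transporting to the associated bundle.** Next I would pass to $G\times^K A$ via the quotient map $q\colon G\times A\to G\times^K A$. The horizontal subspace of $\ffg=\ffk\oplus\ffq$ furnished by $\omega$ is $\ffq$, and the horizontal distribution on $G$ is $H_g:=\d(R_g)_1\ffq$. On the product $G\times A$ I would take $H_{(g,a)}:=H_g\oplus 0\subseteq T_gG\oplus T_aA$ and then push it forward under $q$ to define a horizontal distribution $\mathcal H$ on $G\times^K A$, checking that this descends consistently (independence of the representative $(g,a)$ uses the $K$-invariance of $\ffq$, hence of $H$, under the combined $K$-action $k\cdot(g,a)=(gk^{-1},k\cdot a)$). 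I would verify $\mathcal H$ is complementary to the vertical bundle $\calv(G\times^K A)=\ker \d\pi$ fiberwise, giving a connection, and finally that $\mathcal H$ is $G$-invariant: the left $G$-action on $G\times^K A$ is induced by left multiplication on the first factor, which preserves $H$ since $H$ was defined by right translations.

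**Main obstacle.** The genuinely substantive step is the descent and well-definedness on the associated bundle: one must confirm that the horizontal distribution $H_g\oplus 0$ on $G\times A$ is invariant under the $K$-action $k\cdot(g,a)=(gk^{-1},k\cdot a)$ so that it descends to a well-defined distribution on the quotient, and this is precisely where $\Ad(k)\ffq=\ffq$ is indispensable. Everything else (the connection axioms, $G$-invariance) is a routine chase of left- versus right-translation and the equivariance of the projection $\pr_{\ffk}$; the $K$-equivariance of the splitting is the single hypothesis that makes the construction coherent.
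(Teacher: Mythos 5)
Your overall strategy coincides with the paper's: use the $K$-equivariant projection $\ffg\to\ffk$ to build a principal connection on the $K$-bundle $G\to G/K$, then induce a connection on the associated bundle by the standard descent. However, the concrete formula you give for the principal connection is wrong, and the verification you call immediate fails for it. The structure group $K$ acts on $G$ by \emph{right} multiplication (this is forced by the associated-bundle construction $k\cdot(g,a)=(gk^{-1},k\cdot a)$), so the fundamental vector field of $\eta\in\ffk$ is $\eta_G(g)=(\d L_g)_1\eta$ and the vertical space at $g$ is the left-translate $(\d L_g)_1\ffk$. The connection form must therefore be built from \emph{left} translation, $\omega_g:=\pr_\ffk\circ(\d L_{g^{-1}})_g$, which is exactly the $\ffk$-valued avatar of the paper's vertical projection $\Phi_g(X)=(\d L_g)_1\bbp\big((\d L_{g^{-1}})_g X\big)$. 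With your right-translated definition $\omega_g=\pr_\ffk\circ\d(R_{g^{-1}})_g$ one gets
\begin{gather*}
\omega_g(\eta_G(g))=\pr_\ffk\big(\d(R_{g^{-1}})_g(\d L_g)_1\eta\big)=\pr_\ffk(\Ad(g)\eta),
\end{gather*}
which is not $\eta$ for general $g\in G$, so the reproducing axiom fails; correspondingly, your horizontal spaces $H_g=\d(R_g)_1\ffq=(\d L_g)_1\Ad\big(g^{-1}\big)\ffq$ are not complementary to the vertical spaces $(\d L_g)_1\ffk$ in general. So as written the construction does not produce a connection.

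Once the left/right convention is fixed, everything you outline goes through and matches the paper, but note that two of your justifications need to change. The $G$-invariance of $\omega$ under the left action is not ``automatic because left and right translations commute''; it holds because $L_{(hg)^{-1}}\circ L_h=L_{g^{-1}}$. And the $K$-equivariance of the splitting enters in the descent exactly as you anticipate, but via the identity $\d(R_{k^{-1}})_g(\d L_g)_1\ffq=(\d L_{gk^{-1}})_1\Ad(k)\ffq=(\d L_{gk^{-1}})_1\ffq$ (the paper packages this as the $K$-invariance of $\varpi\circ(\Phi\times\id)$). You correctly identified the one substantive point --- where $\Ad(K)\ffq=\ffq$ is used --- but the formulas surrounding it need to be repaired before the argument is valid.
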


\begin{proof}
We show that the given splitting of the Lie algebra gives rise to a bundle projection from the tangent bundle $T(G\times^KA)=TG\times^KTA$ to the vertical bundle $\calv(G\times^KA)=G\times^KTA$. Here, recall that the vertical bundle $\calv(G\times^KA)$ is a bundle over the total space $G\times ^K A$ of the associated bundle $G\times^KA \to G/K$. Thus, we show that the Lie algebra splitting induces a~connection $\widetilde \Phi \in \Omega^1\big(G\times^KA; \calv(G\times^KA)\big)$.

The Lie algebra splitting gives rise to a $K$-equivariant projection $\bbp\colon \ffg\to\ffk$. The projection~$\bbp$ in turn gives rise to a principal connection $\Phi\in\Omega^1 (G ; \calv(G) )$ on the principal $K$-bundle $G\to G/K$, where the subgroup $K$ acts on $G$ by right-multiplication. For any $g\in G$ and $X\in T_gG$, this principal connection is given by
\begin{gather*}
\Phi_g(X):=(\d L_g)_1\bbp \big(\big(\d L_{g^{-1}}\big)_g (X)\big).
\end{gather*}
The remaining part of the argument consists of showing that the principal connection $\Phi$ induces a connection on the associated bundle $G\times^KA\to G/K$.
This part of the argument is standard.
However, we include an overview here so that we can refer to the construction in the sequel (for more details see, for example, \cite[Section~11.8]{KMS93}).

Consider the quotient map $\varpi\colon TG\times TA \to TG\times ^KTA$ and the $G$-equivariant map
\begin{gather*}
\Phi\times \id\colon \ TG\times TA \to TG \times TA, \qquad (\Phi\times \id)(X,Y)= (\Phi(X),Y ).
\end{gather*}
Since the composition $\varpi\circ (\Phi\times\text{id})$ is $K$-invariant with respect to the action of the subgroup $K$ on the product $TG\times TA$, there exists a unique smooth map $\widetilde \Phi$ such that the following diagram commutes:
\begin{gather}\label{Diag:Connection}
\xy
(-20,10)*+{TG\times TA}="1";
(15,10)*+{TG\times TA}="2";
(-20,-6)*+{TG\times^KTA}="3";
(15,-6)*+{TG\times^KTA.}="4";
{\ar@{->}^{\Phi\times\text{id}} "1";"2"};
{\ar@{->}^{\varpi} "2";"4"};
{\ar@{->}_{\varpi} "1";"3"};
{\ar@{->}_{\widetilde\Phi} "3";"4"};
\endxy
\end{gather}
The map $\widetilde \Phi$ is idempotent since the map $\Phi$ is idempotent. Also, the image of the map $\widetilde \Phi$ is the vertical bundle $\calv(G\times^KA)=G\times^KTA$. Hence, $\widetilde\Phi$ is a projection, so it gives a connection on the associated bundle $G\times^{K}A\to G/K$. Furthermore, the map $\widetilde\Phi$ is $G$-equivariant since the map~$\Phi\times \text{id}$ is $G$-equivariant and the $G$-action commutes with the quotient map~$\varpi$. Hence, the map $\widetilde \Phi$ gives the desired $G$-equivariant connection.
\end{proof}

\begin{Definition}[connection induced by a splitting]\label{Def:SplitConnProj}
Let $(M,G,m,V,j)$ be a proper $G$-manifold with slice and let $\ffg=\ffg_m\oplus \ffq$ be a $G_m$-equivariant splitting.
The {\it connection induced by the splitting} is the $G$-equivariant connection $\widetilde \Phi \in \Omega^1\big(G\cdot V;\calv(G\cdot V)\big)$ obtained from Lemma~\ref{Lemma:SplitConn} by setting $K=G_m$, setting $A=V$, and using the canonical $G$-equivariant diffeomorphism $G\times^{G_m}V \cong G\cdot V$. The {\it vertical projection of vector fields induced by the splitting} is the map:
\begin{gather*}
\nu\colon \ \Gamma(T(G\cdot V))\to \Gamma(\calv(G\cdot V)), \qquad \nu(X):=\widetilde \Phi \circ X.
\end{gather*}
\end{Definition}

\begin{Remark}\label{Rem:VertProjIsEquiv}
Since the connection of Definition \ref{Def:SplitConnProj} is equivariant, the vertical projection of vector fields maps $G$-equivariant vector fields to $G$-equivariant vector fields. Hence, we may also take the vertical projection $\nu$ to be a map $\Gamma(T(G\cdot V))^G\to \Gamma(\calv(G\cdot V))^G$. In fact, if $H$ is any Lie subgroup of $G$, the vertical projection takes $H$-equivariant vector fields to $H$-equivariant vector fields. Hence, we may also take the vertical projection of vector fields to be a map $\Gamma(T(G\cdot V))^H\to \Gamma(\calv(G\cdot V))^H$.
\end{Remark}

Thus, we obtain the following functor that generalizes Krupa's decomposition from \cite{K90}:

\begin{Definition}[projection functor]\label{Def:ProjectionFunctor}
Let $(M,G,m,V,j)$ be a proper $G$-manifold with slice, let $\ffg=\ffg_m\oplus\ffq$ be a $G_m$-equivariant splitting, let $\bbp\colon \ffg\to\ffg_m$ be the corresponding $G_m$-equivariant projection, and let $\nu\colon \Gamma(T(G\cdot V))^G\to\Gamma(\calv(G\cdot V))^G$ be the vertical projection of equivariant vector fields induced by the splitting (see Definition~\ref{Def:SplitConnProj}). The {\it projection functor} corresponding to the Lie algebra splitting $\ffg=\ffg_m\oplus\ffq$ is the functor
\begin{gather*}
\xy
(-25,10)*+{P\colon \ \bbx(G\times G\cdot V\toto G\cdot V)}="1";
(25,10)*+{\bbx(G_m\times V\toto V),}="2";
{\ar@{->} "1";"2"};
(-45,0)*+{\big(X}="3";
(-25,0)*+{Y\big)}="4";
{\ar@{->}^{(\psi,X)} "3";"4"};
(0,0)*+{\big(P(X)}="5";
(35,0)*+{P(Y)\big),}="6";
{\ar@{->}^{(P(\psi),P(X))} "5";"6"};
{\ar@{|->} (-15,0);(-10,0)};
\endxy
\end{gather*}
where for any vector field $X\in\Gamma(T(G\cdot V))^G$ we define
\begin{gather*}
P(X)\colon \ V\to TV, \qquad P(X):= j^*(\nu(X)),
\end{gather*}
and for any map $\psi\in C^\infty(G\cdot V,\ffg)^G$ we define
\begin{gather*}
P(\psi)\colon \ V\to\ffg_m, \qquad P(\psi):=j^*(\bbp\circ\psi).
\end{gather*}
\end{Definition}

\begin{Remark}\label{Rem:Pullbacks}
Recall that given a smooth embedding we can pull back those vector fields on the target manifold that are tangent to the image of the embedding.
Hence, if $(M,G,m,V,j)$ is a~proper $G$-manifold; the vertical vector fields on the bundle $G\cdot V\to G\cdot m$ can be pulled-back by the embedding~$j$. Consequently, the functor~$P$ of Definition~\ref{Def:ProjectionFunctor} is well-defined on objects.
\end{Remark}

We can now state the equivalence of groupoids mentioned in Remark~\ref{Rem:Groupoids}, which is due to Lerman. Instead of the functors of Definitions~\ref{Def:ExtensionFunctor} and~\ref{Def:ProjectionFunctor}, Lerman used an equivalent formulation in terms of $2$-term chain complexes.
We state the equivalence using the functor formulation:

\begin{Theorem}[{Lerman, \cite[Theorem~4.3]{L15}}]\label{Thm:LermanEquivalence}
Let $(M,G,m,V,j)$ be a proper $G$-manifold with slice. The equivariant extension functor $E\colon \bbx(G_m\times V\toto V)\to \bbx(G\times G\cdot V\toto G\cdot V)$ $($see Definition~{\rm \ref{Def:ExtensionFunctor})} and the projection functor $P\colon \bbx(G\times G\cdot V\toto G\cdot V)\to \bbx(G_m\times V\toto V)$ corresponding to a choice of $G_m$-equivariant splitting $\ffg=\ffg_m\oplus\ffq$ $($see Definition~{\rm \ref{Def:ProjectionFunctor})} form an equivalence of categories. In particular, such functors satisfy
\begin{gather*}
P\circ E = \id \qquad\text{and}\qquad E\circ P \simeq \id.
\end{gather*}
For a given equivariant vector field $X$ on the tube $G\cdot V$, the natural isomorphism $\alpha\colon E\circ P \Rightarrow \id$ is of the form
\begin{gather*}
\alpha_X=\big(\psi^X,E(P(X))\big),
\end{gather*}
where $\psi^X\in C^\infty(G\cdot V, \ffg)^G$ is an infinitesimal gauge transformation taking values in the complement $\ffq$.
Thus, the map $\psi^X$ is such that
\begin{gather*}
X=E(P(X))+\psi^X_{G\cdot V},
\end{gather*}
where $\psi^X_{G_\cdot V}$ is the vector field induced by the map $\psi^X$.
\end{Theorem}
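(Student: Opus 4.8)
The plan is to establish the two identities $P\circ E=\id$ and $E\circ P\simeq\id$ directly, since a quasi-inverse pair of functors is automatically an equivalence of categories. First I would verify $P\circ E=\id$ by a short computation at the level of objects and morphisms. For an object $X\in\Gamma(TV)^{G_m}$, the extended field $E(X)$ is by construction vertical for the bundle $G\cdot V\to G\cdot m$, so the vertical projection fixes it, $\nu(E(X))=E(X)$, and pulling back along $j$ returns $X$ because $E(X)(j(v))=\d j\,X(v)$. For a morphism $\psi\in C^\infty(V,\ffg_m)^{G_m}$ one computes $P(E(\psi))(v)=\bbp(E(\psi)(j(v)))=\bbp(\psi(v))=\psi(v)$, using that $\psi$ is $\ffg_m$-valued and $\bbp$ is the projection onto $\ffg_m$. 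Hence $P\circ E=\id$ strictly.

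The substance of the theorem is the natural isomorphism $\alpha\colon E\circ P\Rightarrow\id$. I would first prove the key identity $E(P(X))=\nu(X)$, i.e.\ that $E\circ P$ recovers the vertical part of $X$: since $P(X)=j^*(\nu(X))$ and $\nu(X)$ is a $G$-equivariant vertical field whose restriction to the fiber $j(V)$ over $m$ is tangent to $j(V)$, applying $E$ reconstructs $\nu(X)$ by equivariance. Consequently $X-E(P(X))=X-\nu(X)$ is exactly the horizontal part of $X$. The remaining and most delicate task is to exhibit this horizontal part as a fundamental vector field $\psi^X_{G\cdot V}$ for a gauge transformation $\psi^X\in C^\infty(G\cdot V,\ffg)^G$ whose values along the slice lie in $\ffq$. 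For this I would work in the model $G\times^{G_m}V$ and compute the tangent space at $[1,v]$: quotienting the $G_m$-orbit directions $\{(-\eta,\eta_V(v))\,|\,\eta\in\ffg_m\}$ out of $\ffg\oplus T_vV$ and using the splitting $\ffg=\ffg_m\oplus\ffq$ yields a canonical identification $T_{[1,v]}(G\times^{G_m}V)\cong\ffq\oplus T_vV$ under which $\widetilde\Phi$ is the projection onto the second summand. A short computation then shows that for $\xi\in\ffg$ the fundamental vector $\xi_{G\cdot V}([1,v])$ is represented by $((\id-\bbp)\xi,(\bbp\xi)_V(v))$ in this splitting; in particular, for $\xi\in\ffq$ it is precisely the horizontal vector $(\xi,0)$. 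Defining $\psi^X$ on the slice to be the $\ffq$-component of $X([1,v])$ in this splitting, which is $G_m$-equivariant because $\ffq$ is $\Ad(G_m)$-invariant and $X$ is equivariant, and extending it $G$-equivariantly, gives $X-\nu(X)=\psi^X_{G\cdot V}$. Thus $X=E(P(X))+\psi^X_{G\cdot V}$, so $\alpha_X=(\psi^X,E(P(X)))$ is an invertible morphism from $E(P(X))$ to $X$.

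Finally I would check naturality of $\alpha$. For a morphism $(\phi,X)$ with target $Y=X+\phi_{G\cdot V}$, the two composites around the naturality square reduce, in the action groupoid, to the single equality of gauge transformations $\psi^Y+E(P(\phi))=\phi+\psi^X$. Since both sides are equivariant extensions of their values along the slice, it suffices to check this at points $[1,v]$, where it splits into its $\ffg_m$- and $\ffq$-parts. The $\ffg_m$-parts agree trivially, both being $\bbp(\phi([1,v]))$, while the $\ffq$-parts reduce to the assertion that the $\ffq$-component of $Y([1,v])$ equals the $\ffq$-component of $X([1,v])$ plus that of $\phi([1,v])$; this is immediate from the fundamental-vector-field formula above applied to $\phi_{G\cdot V}([1,v])$. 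Together with $P\circ E=\id$ this yields the claimed equivalence.

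I expect the main obstacle to be the middle step: pinning down the concrete identification of the horizontal distribution of $\widetilde\Phi$ with the $\ffq$-directions and verifying that, along the slice, these coincide with fundamental vector fields of $\ffq$-elements. This is where the choice of splitting genuinely enters, and it requires carefully tracking the quotient $T(G\times V)\to T(G\times^{G_m}V)$ together with the principal connection formula $\Phi_g(X)=(\d L_g)_1\bbp\big((\d L_{g^{-1}})_g X\big)$ from Lemma~\ref{Lemma:SplitConn}.
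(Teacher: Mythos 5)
The paper itself offers no proof of this statement: it is quoted verbatim from Lerman \cite[Theorem~4.3]{L15}, where it is established in an equivalent formulation using $2$-term chain complexes of topological vector spaces, so there is no internal argument to compare yours against. Judged on its own, your direct verification in the functor language is correct and essentially self-contained. The computation of $P\circ E=\id$ is routine and right (verticality of $E(X)$ plus $\bbp|_{\ffg_m}=\id$), the identity $E(P(X))=\nu(X)$ follows from $G$-equivariance of $\nu(X)$ exactly as you say, and the heart of the matter is where you locate it: identifying $T_{[1,v]}\big(G\times^{G_m}V\big)\cong\ffq\oplus T_vV$ by choosing the unique representative of each class modulo $\{(-\eta,\eta_V(v))\,|\,\eta\in\ffg_m\}$ with $\ffg$-part in $\ffq$, under which $\widetilde\Phi$ becomes projection onto $T_vV$ and $\xi_{G\cdot V}([1,v])$ is represented by $\big((\id-\bbp)\xi,(\bbp\xi)_V(v)\big)$. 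That formula is correct and is precisely what exhibits the horizontal part $X-\nu(X)$ along the slice as the fundamental vector field of the $\ffq$-component of $X([1,\cdot])$, and what makes your naturality check ($\psi^Y+E(P(\phi))=\phi+\psi^X$, verified separately in $\ffg_m$- and $\ffq$-parts at $[1,v]$) go through. Two small points worth making explicit in a write-up: that the $\ffq$-component map $v\mapsto\psi^X([1,v])$ is smooth and $\Ad(G_m)$-equivariant (so that Lemma~\ref{Lemma:EquivExtension} yields a genuine element of $C^\infty(G\cdot V,\ffg)^G$), and that equality of two $G$-equivariant $\ffg$-valued maps on $G\cdot V$ may indeed be tested on $j(V)$ because $G\cdot j(V)=G\cdot V$. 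Compared with Lerman's chain-complex argument, your route buys a concrete formula for $\psi^X$ at the cost of the pointwise bookkeeping in the associated-bundle model, which you have handled correctly.
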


\begin{Remark}\label{Rem:SliceProjIndep}
Lerman introduced this approach to Krupa's decomposition to quantify the result of the choices in slice and projection. The choice in slice is adressed as follows. Let $M$ be a proper $G$-manifold and $m$ a point in $M$. If $V_1$ and $V_2$ are two slices for the action through the point $m$, then the corresponding groupoids $\bbx(G_m\times V_1\toto V_1)$ and $\bbx(G_m\times V_2\toto V_2)$ of $G_m$-equivariant vector fields are isomorphic groupoids \cite[Lemma~3.21]{L15}. After perhaps shrinking the slices, the isomorphism is induced by a $G_m$-equivariant diffeomorphism between the slices. The choice in projection, or equivalently the choice of Lie algebra splitting, is addressed as follows. Given a proper $G$-manifold with slice $(M,G,m,V,j)$, and two choices of $G_m$-equivariant splittings
\begin{gather*}
\ffg=\ffg_m\oplus\ffq_1=\ffg_m\oplus\ffq_2,
\end{gather*}
the corresponding projection functors
\begin{gather*}
P_1,P_2\colon \ \bbx(G\times G\cdot V\toto G\cdot V)\to \bbx(G_m\times V\toto G\times V)
\end{gather*}
are naturally isomorphic \cite[Lemma~3.17]{L15}.
\end{Remark}

As may be expected, the functors we have introduced preserve relative equilibria. We prepare for the proof of this fact via Lemmas \ref{Lemma:PBRelEq}, \ref{Lemma:ExtRelEq}, and \ref{Lemma:VertRelEq}.

\begin{Lemma}\label{Lemma:PBRelEq}
Let $M$ and $N$ be proper $G$-manifolds and let $f\colon M\to N$ be a $G$-equivariant diffeomorphism.
Suppose that $X$ and $Y$ are $f$-related equivariant vector fields on $M$ and $N$ respectively.
Then a point $m\in M$ is a relative equilibrium of the vector field $X$ if and only if the point $f(m)$ is a relative equilibrium of the vector field $Y$.
Thus, pullbacks and pushforwards of vector fields by equivariant diffeomorphisms preserve relative equilibria.
\end{Lemma}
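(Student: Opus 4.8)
The plan is to unwind the definition of relative equilibrium on both sides and use the fact that an equivariant diffeomorphism carries group orbits to group orbits, so that its differential restricts to an isomorphism between the corresponding tangent spaces to the orbits. Concretely, since $f$ is $G$-equivariant we have $f(g\cdot m)=g\cdot f(m)$ for all $g\in G$, hence $f(G\cdot m)=G\cdot f(m)$. Because $f$ is a diffeomorphism, its differential $(\d f)_m\colon T_mM\to T_{f(m)}N$ is a linear isomorphism, and restricting it to the orbit directions gives an isomorphism $(\d f)_m\colon T_m(G\cdot m)\xrightarrow{\ \cong\ }T_{f(m)}(G\cdot f(m))$.

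First I would record the $f$-relatedness hypothesis in the form $Y\circ f=\d f\circ X$, equivalently $Y(f(m))=(\d f)_m\, X(m)$ for every point $m$. Next I would run the following chain of equivalences. The point $m$ is a relative equilibrium of $X$ precisely when $X(m)\in T_m(G\cdot m)$. Applying the linear isomorphism $(\d f)_m$, which maps $T_m(G\cdot m)$ bijectively onto $T_{f(m)}(G\cdot f(m))$, this holds if and only if $(\d f)_m\, X(m)\in T_{f(m)}(G\cdot f(m))$. By $f$-relatedness the left-hand vector is exactly $Y(f(m))$, so the condition becomes $Y(f(m))\in T_{f(m)}(G\cdot f(m))$, which is by definition the statement that $f(m)$ is a relative equilibrium of $Y$. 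This establishes the desired equivalence.

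The one point requiring a little care is the claim that $(\d f)_m$ restricts to an isomorphism between the two orbit tangent spaces; I would justify it using the description $T_m(G\cdot m)=\{\nu_M(m)\mid \nu\in\ffg\}$ from the Remark following Definition~\ref{Def:RelEq}, together with equivariance: differentiating $f(\exp(t\nu)\cdot m)=\exp(t\nu)\cdot f(m)$ at $t=0$ gives $(\d f)_m\,\nu_M(m)=\nu_N(f(m))$, so $(\d f)_m$ sends the generating fundamental vectors of $G\cdot m$ to those of $G\cdot f(m)$ and thus maps $T_m(G\cdot m)$ onto $T_{f(m)}(G\cdot f(m))$; injectivity is automatic since $\d f$ is an isomorphism. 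I do not anticipate a genuine obstacle here—the whole argument is a formal consequence of equivariance plus invertibility of $\d f$—and the final sentence of the statement follows immediately, since the pushforward $f_*X$ is by definition $f$-related to $X$ and the pullback $f^*Y$ is $f$-related to $Y$ for the diffeomorphism $f$.
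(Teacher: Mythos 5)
Your proposal is correct and follows essentially the same route as the paper: both arguments apply the $f$-relatedness identity $Y\circ f=\d f\circ X$ at $m$ and use equivariance to see that $(\d f)_m$ carries $T_m(G\cdot m)$ onto $T_{f(m)}(G\cdot f(m))$. Your extra justification via fundamental vector fields merely spells out a step the paper leaves implicit.
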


\begin{proof}
The verification is a straightforward computation using the equation $\d f\circ X = Y\circ f$.
First, suppose $m$ is a $G$-relative equilibrium of the vector field $X$. Then
\begin{gather*}
Y(f(m)) =(\d f)_m(X(m))\in (\d f)_m(T_m(G\cdot m))=T_{f(m)}(G\cdot f(m)),
\end{gather*}
where $(\d f)_m(T_m(G\cdot m))=T_{f(m)}(G\cdot f(m))$ follows by the equivariance of the diffeomorphism~$f$. Thus, the point $f(m)$ is a $G$-relative equilibrium of the vector field $Y$. The converse is completely analogous.
\end{proof}

\begin{Lemma}\label{Lemma:ExtRelEq}
Let $M$ be a proper $G$-manifold, let $K$ be a Lie subgroup of~$G$, and let~$A$ be a~$K$-invariant regular submanifold of $M$ satisfying the hypotheses of Lemma~{\rm \ref{Lemma:EquivExtension}}. Suppose that~$X$ is a~$K$-equivariant vector field on $A$ and that the point $a\in A$ is a $K$-relative equilibrium of the vector field $X$. Then the point~$a$ is a~$G$-relative equilibrium of the equivariant extension~$\varepsilon X$ of~$X$. That is, equivariant extension preserves relative equilibria.
\end{Lemma}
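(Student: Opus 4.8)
The plan is to exploit the defining relation $\varepsilon X(g\cdot a) := g\cdot X(a)$ for the equivariant extension, together with the fact that being a relative equilibrium is a statement about tangency to a group orbit. Since $a$ is a $K$-relative equilibrium of $X$, by definition $X(a)$ is tangent to the $K$-orbit $K\cdot a$ inside $A$. The goal is to show $\varepsilon X(a)$ is tangent to the $G$-orbit $G\cdot a$ inside $M$. The key observation is that $\varepsilon X(a)=X(a)$ at the point $a$ itself (taking $g=1$ in the extension formula), so it suffices to show that $T_a(K\cdot a)\subseteq T_a(G\cdot a)$, which is immediate since $K\cdot a\subseteq G\cdot a$. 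This already settles the statement at the level of tangent vectors.

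First I would make the velocity description explicit. By the Remark following Definition~\ref{Def:RelEq}, a point is a relative equilibrium precisely when a velocity exists, i.e.\ there is $\xi\in\ffk$ with $X(a)=\xi_A(a)$, where $\xi_A$ denotes the fundamental vector field of the $K$-action on $A$. The natural next step is to relate $\xi_A$ on the submanifold $A$ to $\xi_M$ on the ambient manifold. Because $A$ is a $K$-invariant regular submanifold and $\xi\in\ffk\subseteq\ffg$, the fundamental vector field $\xi_M$ is tangent to $A$ along $A$ and restricts there to $\xi_A$; concretely, for $a\in A$ the curve $t\mapsto\exp(t\xi)\cdot a$ lies in $A$, so differentiating at $t=0$ gives $\xi_A(a)=\xi_M(a)$ under the inclusion $T_aA\hookrightarrow T_aM$.

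Then I would assemble the conclusion: using $\varepsilon X(a)=X(a)=\xi_A(a)=\xi_M(a)$, and noting that $\xi\in\ffk\subseteq\ffg$ means $\xi_M(a)\in T_a(G\cdot a)$, we get that $\varepsilon X(a)$ is tangent to the $G$-orbit through $a$, so $a$ is a $G$-relative equilibrium of $\varepsilon X$. I expect the only point requiring care — the ``main obstacle,'' though a mild one — to be the bookkeeping identifying $T_aA$ as a subspace of $T_aM$ and checking that the pushforward identity $\varepsilon X(a)=X(a)$ holds under this identification; this is where the hypotheses of Lemma~\ref{Lemma:EquivExtension} (so that $\varepsilon X$ is genuinely well-defined and smooth with $\varepsilon X|_A=X$) are used. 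Everything else is a direct translation between the velocity characterization of relative equilibria and the inclusion $\ffk\subseteq\ffg$.
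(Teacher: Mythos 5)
Your proof is correct and follows essentially the same route as the paper: both arguments reduce to the observations that $\varepsilon X$ restricts to $X$ at $a$ and that $T_a(K\cdot a)\subseteq T_a(G\cdot a)$. The only cosmetic difference is that you make the tangency explicit via the velocity characterization ($X(a)=\xi_A(a)=\xi_M(a)$ for some $\xi\in\ffk\subseteq\ffg$), whereas the paper cites its Lemma~\ref{Lemma:PBRelEq} applied to the inclusion $\iota\colon A\hookrightarrow G\cdot A$ to get $\varepsilon X(a)\in T_a(K\cdot a)$ before invoking the same inclusion of tangent spaces.
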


\begin{proof}
This is essentially a corollary of Lemma~\ref{Lemma:PBRelEq}. Let $\iota\colon A\hookrightarrow G\cdot A$ be the inclusion of the submanifold $A$.
Note that the tube $G\cdot A$ is a $K$-manifold and that the inclusion $\iota$ is a~$K$-equivariant diffeomorphism onto its image $A$. Observe that the vector fields $X$ and $\varepsilon X$ are $\iota$-related; in fact, $\varepsilon X$ restricts to $X$ on $A$. Thus, by Lemma~\ref{Lemma:PBRelEq}, we know that $a$ is a $K$-relative equilibrium of the vector field $\varepsilon X$; that is, $\varepsilon X(a)\in T_a(K\cdot a)$. Since $A$ is a~regular submanifold, the tangent space $T_a(K\cdot a)$ is contained in the tangent space $T_a(G\cdot a)$. Hence, the point~$a$ is a~$G$-relative equilibrium of~$\varepsilon X$.
\end{proof}

\begin{Lemma}\label{Lemma:VertRelEq}
Let $G$ be a Lie group, let $K$ be a Lie subgroup, let $A$ be a proper $K$-manifold, and let $\ffg=\ffk\oplus\ffq$ be a $K$-equivariant splitting.
Let $\nu\colon \Gamma(T(G\times^{K} A))^G\to\Gamma(\calv(G\times^{K} A))^G$ be the vertical projection induced by the splitting $($Definition~{\rm \ref{Def:SplitConnProj})}, and let~$X$ be a $G$-equivariant vector field on the associated bundle $G\times^KA$. Then if the point $m\in G\times^KA$ is a $G$-relative equilibrium of the vector field~$X$, it is also a $G$-relative equilibrium of the vertical projection $\nu(X)$. That is, the vertical projection $\nu$ preserves relative equilibria.
\end{Lemma}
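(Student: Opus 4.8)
The plan is to reduce everything to one structural fact about the connection built in Lemma~\ref{Lemma:SplitConn}: its associated vertical projection $\widetilde\Phi$ maps the orbit tangent space $T_m(G\cdot m)$ into itself. Granting this, the lemma is immediate. Since $m$ is a $G$-relative equilibrium of $X$ (Definition~\ref{Def:RelEq}) and $T_m(G\cdot m)=\{\zeta_{G\times^KA}(m)\mid\zeta\in\ffg\}$, there is a $\zeta\in\ffg$ with $X(m)=\zeta_{G\times^KA}(m)$. By Definition~\ref{Def:SplitConnProj} we have $\nu(X)(m)=\widetilde\Phi(X(m))$, so the structural fact gives $\nu(X)(m)\in T_m(G\cdot m)$, i.e., $m$ is a relative equilibrium of $\nu(X)$.

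To prove the structural fact I would compute $\widetilde\Phi$ explicitly on a fundamental vector field. Write $m=[g,a]$ and recall the $G$-action $h\cdot[g,a]=[hg,a]$, so $\zeta_{G\times^KA}(m)=\frac{\d}{\d t}\big|_0[\exp(t\zeta)g,a]$. Letting $q\colon G\times A\to G\times^KA$ be the quotient map, under the identification $T(G\times^KA)=TG\times^KTA$ its differential $\d q$ coincides with the quotient map $\varpi$ of diagram~(\ref{Diag:Connection}); hence $\zeta_{G\times^KA}(m)=\varpi(w,0)$ with $w=\frac{\d}{\d t}\big|_0\exp(t\zeta)g\in T_gG$. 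Applying diagram~(\ref{Diag:Connection}) and then the explicit formula for the principal connection $\Phi$ from Lemma~\ref{Lemma:SplitConn} yields
\begin{gather*}
\widetilde\Phi\big(\zeta_{G\times^KA}(m)\big)=\varpi\big(\Phi_g(w),0\big)=\varpi\big((\d L_g)_1\eta,0\big),\qquad \eta:=\bbp\big(\Ad(g^{-1})\zeta\big)\in\ffk,
\end{gather*}
where I use that $(\d L_{g^{-1}})_g w=\Ad(g^{-1})\zeta$.

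The crux is then the general identity $\varpi((\d L_g)_1\eta,0)=(\Ad(g)\eta)_{G\times^KA}(m)$, valid for every $\eta\in\ffg$. This follows by differentiating the curve $t\mapsto[g\exp(t\eta),a]$ and rewriting it with the conjugation identity $g\exp(t\eta)=\exp(t\Ad(g)\eta)g$ together with the $G$-action formula, so that $[g\exp(t\eta),a]=\exp(t\Ad(g)\eta)\cdot m$. Combining with the previous display gives
\begin{gather*}
\widetilde\Phi\big(\zeta_{G\times^KA}(m)\big)=\big(\Ad(g)\eta\big)_{G\times^KA}(m)\in T_m(G\cdot m),
\end{gather*}
which is exactly the structural fact.

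I expect the main obstacle to be bookkeeping rather than anything conceptual: keeping the several quotient maps and the convention $k\cdot(g,a)=(gk^{-1},k\cdot a)$ consistent, verifying $\d q=\varpi$ under the identification $T(G\times^KA)=TG\times^KTA$, and confirming that $\Phi_g(w)$ is the left translation of $\eta=\bbp(\Ad(g^{-1})\zeta)$. That $\eta$ lands in $\ffk$, so that $\widetilde\Phi$ indeed takes values in the vertical bundle, is automatic from $\bbp$, but is not even needed for the orbit-tangency conclusion, since the identity above holds for all $\eta\in\ffg$. Properness of $A$ enters only through the existence of the connection in Lemma~\ref{Lemma:SplitConn} and the fact that $\nu$ preserves equivariance (Remark~\ref{Rem:VertProjIsEquiv}).
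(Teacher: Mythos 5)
Your proposal is correct and follows essentially the same route as the paper: both reduce the lemma to the fact that $\widetilde\Phi$ maps $T_m(G\cdot m)$ into itself, and both establish that fact by lifting to $G\times A$ and pushing through the commutative diagram~(\ref{Diag:Connection}). The only difference is one of economy: where you compute $\widetilde\Phi$ explicitly on each fundamental vector field and identify the image as $(\Ad(g)\bbp(\Ad(g^{-1})\zeta))_{G\times^KA}(m)$, the paper simply observes that $T_p(G\cdot p)=T_gG\times\{0\}$ is preserved by $\Phi\times\id$ and is carried onto $T_m(G\cdot m)$ by $\varpi$, which yields the same containment without the bookkeeping you anticipate.
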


\begin{proof}
Consider the quotient maps:
\begin{gather*}
\pi\colon \ G\times A \to G\times^KA, \qquad
\varpi\colon \ TG\times TA\to TG\times^KTA.
\end{gather*}
Let $X$ be an equivariant vector field on the associated bundle $G\times^KA$ and suppose that the point $p=(g,a)\in G\times A$ is such that the point $m:=\pi(p)\in G\times^KA$ is a relative equilibrium of the vector field~$X$. Let $\Phi\in \Omega^1(G ; \calv G )$ and $\widetilde \Phi \in \Omega^1\big(G\times^KA;\calv(G\times^KA)\big)$ be the connections induced by the splitting of the Lie algebra (see Definition~\ref{Def:SplitConnProj} and the proof of Lemma~\ref{Lemma:SplitConn}), and let the map
\begin{gather*}
\nu\colon \ \Gamma\big(T\big(G\times^KA\big)\big)^G\to\Gamma\big(\calv\big(G\times^KA\big)\big)^G
\end{gather*}
be the vertical projection of equivariant vector fields with respect to this connection (Definition~\ref{Def:SplitConnProj}).
We want to show that the point $m$ is a $G$-relative equilibrium of the vector field~$\nu(X)$; that is, we want to show that
\begin{gather*}
\nu(X)(m)\in T_m(G\cdot m).
\end{gather*}

Since the action of $G$ commutes with the quotient maps $\pi$ and $\varpi$, observe that
\begin{gather}\label{Eq:Conn1}
\varpi (T_p(G\cdot p) ) =T_{m}(G\cdot m).
\end{gather}
Furthermore, using that $T_p(G\cdot p)=T_gG\times\{0\}$, it is clear that
\begin{gather}\label{Eq:Conn2}
(\Phi\times\text{id})(T_p(G\cdot p)) \subseteq T_p(G\cdot p).
\end{gather}
Therefore:
\begin{align*}
\widetilde\Phi\big(T_{m}(G\cdot m)\big)
&=\widetilde\Phi \big( \varpi(T_p(G\cdot p) \big) \qquad \text{by } (\ref{Eq:Conn1})\\
&=\varpi\circ(\Phi\times\text{id})\big(T_p(G\cdot p) \big) \qquad \text{by } (\ref{Diag:Connection}) \\
&\subseteq \varpi (T_p(G\cdot p)) \qquad \text{by } (\ref{Eq:Conn2}) \\
&= T_{m}(G\cdot m).
\end{align*}
Consequently, since $X(m)\in T_m(G\cdot m)$, the vertical projection of the vector $X(m)$ is such that
\begin{gather*}
\nu(X)(m)= \widetilde\Phi (X(m))\in T_{m}(G\cdot m).
\end{gather*}
Hence, the point $m$ is a relative equilibrium of the vector field $\nu(X)$.
\end{proof}

Now we can prove that the functors of Definition \ref{Def:ExtensionFunctor} and Definition \ref{Def:ProjectionFunctor} also preserve relative equilibria. Parts~(3) and~(4) of the following proposition are especially relevant to the following section's main theorem (Theorem~\ref{Thm:MainTheorem1}).

\begin{Proposition}\label{Prop:RelEqPres}
Let $(M,G,m,V,j)$ be a proper $G$-manifold with slice, let $E\colon \bbx(G_m\times V\toto V)\to \bbx(G\times G\cdot V \toto G\cdot V)$ be the equivariant extension functor $($Definition~{\rm \ref{Def:ExtensionFunctor})}, and let $P\colon \bbx(G\times G\cdot V \toto G\cdot V)\to \bbx(G_m\times V \toto V)$ be the projection functor corresponding to a choice of $G_m$-equivariant splitting $\ffg=\ffg_m\oplus \ffq$.
Then the following are true:
\begin{enumerate}\itemsep=0pt
\item[$1.$] If a point $v\in V$ is a relative equilibrium of an equivariant vector field $X$ on the slice $V$, the point $j(v)$ is a relative equilibrium of the vector field $E(X)$ on the tube $G\cdot V$.
\item[$2.$] If a point $j(v)\in j(V)$ is a relative quilibrium of an equivariant vector field $X$ on the tube $G\cdot V$, the point $v\in V$ is a relative equilibrium of the vector field $P(X)$ on the slice $V$.
\item[$3.$] If the point $m$, through which the slice was chosen, is a relative equilibrium of an equivariant vector field $X$ on the tube $G\cdot V$, then the point $j^{-1}(m)$ is an equilibrium of the vector field~$P(X)$ on the slice~$V$.
\item[$4.$] Let the point $m$ be a $G$-relative equilibrium of a $G$-equivariant vector field $X$ on the tube $G\cdot V$, let $H$ be a Lie subgroup of the stabilizer $G_m$, and let $Y$ be an $H$-equivariant vector field on the slice $V$ that is $H$-isomorphic to the vector field $P(X)$. Then the point $j^{-1}(m)$ is an equilibrium of the vector field $Y$.
\end{enumerate}
\end{Proposition}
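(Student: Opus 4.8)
The plan is to dispatch the four parts in order, letting each build on its predecessors, and to isolate at the outset the single geometric fact that drives parts~3 and~4. Write $v_0 := j^{-1}(m)$ for the slice point lying over $m$. Since $j$ is $G_m$-equivariant and every $h\in G_m$ fixes $m$, the identity $j(h\cdot v_0)=h\cdot j(v_0)=h\cdot m=m=j(v_0)$ together with injectivity of $j$ forces $h\cdot v_0=v_0$; thus $v_0$ is a fixed point of the $G_m$-action on $V$. For part~1 I would unwind $E(X)=\varepsilon(j_*X)$ from the remark following Definition~\ref{Def:ExtensionFunctor}. The embedding $j$ restricts to a $G_m$-equivariant diffeomorphism $V\to j(V)$, so by Lemma~\ref{Lemma:PBRelEq} the point $j(v)$ is a $G_m$-relative equilibrium of $j_*X$; then Lemma~\ref{Lemma:ExtRelEq}, applied with $K=G_m$ and $A=j(V)$, promotes this to the statement that $j(v)$ is a $G$-relative equilibrium of $\varepsilon(j_*X)=E(X)$.

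For part~2 I would first apply the vertical projection. By Lemma~\ref{Lemma:VertRelEq}, if $j(v)$ is a $G$-relative equilibrium of $X$ then it is also one of $\nu(X)$, so $\nu(X)(j(v))\in T_{j(v)}(G\cdot j(v))$. On the other hand $\nu(X)$ is vertical for the bundle $G\cdot V\to G\cdot m$, whose fiber over $m$ is exactly $j(V)$, whence $\nu(X)(j(v))\in T_{j(v)}j(V)$. The crux is the identification
\[
T_{j(v)}(G\cdot j(v))\cap T_{j(v)}j(V)=T_{j(v)}(G_m\cdot j(v)).
\]
To establish it, observe that the $G$-equivariant projection $G\cdot V\to G\cdot m$ sends the fundamental vector $\xi_{G\cdot V}(j(v))$ to $\xi_M(m)$ on the orbit $G\cdot m$, which vanishes precisely when $\xi\in\ffg_m$; and for $\xi\in\ffg_m$ the curve $\exp(t\xi)\cdot j(v)=j(\exp(t\xi)\cdot v)$ gives $\xi_{G\cdot V}(j(v))=\d j\,(\xi_V(v))\in T_{j(v)}(G_m\cdot j(v))$. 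Combining the two memberships places $\nu(X)(j(v))$ in $T_{j(v)}(G_m\cdot j(v))$, so $j(v)$ is a $G_m$-relative equilibrium of the restriction $\nu(X)|_{j(V)}$, which is $G_m$-equivariant by Remark~\ref{Rem:VertProjIsEquiv}. Since $\nu(X)|_{j(V)}$ and $P(X)=j^*\nu(X)$ are $j$-related, Lemma~\ref{Lemma:PBRelEq} then yields that $v$ is a $G_m$-relative equilibrium of $P(X)$.

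For part~3 I would specialize part~2 to $v=v_0$. Because $v_0$ is $G_m$-fixed the orbit $G_m\cdot v_0$ is a single point, so $T_{v_0}(G_m\cdot v_0)=0$; being a $G_m$-relative equilibrium of $P(X)$ therefore forces $P(X)(v_0)=0$, i.e.\ $v_0$ is an equilibrium. For part~4 I would combine this with Lemma~\ref{Lemma:SameRelEqs}. Viewing $P(X)$ and $Y$ as $H$-equivariant fields (legitimate since $H\le G_m$), the $H$-isomorphism preserves relative equilibria, and $v_0$ is trivially an $H$-relative equilibrium of $P(X)$ as $P(X)(v_0)=0$. Hence $v_0$ is an $H$-relative equilibrium of $Y$, so $Y(v_0)\in T_{v_0}(H\cdot v_0)$; but $H\le G_m$ also fixes $v_0$, whence $T_{v_0}(H\cdot v_0)=0$ and $Y(v_0)=0$.

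I expect the main obstacle to be the tangent-space intersection in part~2, namely verifying that the only $G$-orbit directions at $j(v)$ that are vertical for $G\cdot V\to G\cdot m$ are the $G_m$-orbit directions. Everything else is assembly of the earlier preservation results (pushforward and pullback, equivariant extension, vertical projection, and isomorphism invariance) together with the single fact that $G_m$, and hence $H$, fixes $v_0$.
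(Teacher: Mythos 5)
Your proof is correct, and for parts 2--4 it takes a genuinely different (and in places more careful) route than the paper. Part 1 is the same: push forward along $j$ (Lemma~\ref{Lemma:PBRelEq}) and extend (Lemma~\ref{Lemma:ExtRelEq}). For part 2 the paper simply cites Lemmas~\ref{Lemma:PBRelEq} and~\ref{Lemma:VertRelEq}, which by themselves only give that $j(v)$ is a \emph{$G$}-relative equilibrium of $\nu(X)$; your identification $T_{j(v)}(G\cdot j(v))\cap T_{j(v)}j(V)=T_{j(v)}(G_m\cdot j(v))$, proved by pushing fundamental vector fields down the bundle $G\cdot V\to G\cdot m$, is precisely the step needed to convert this into the $G_m$-statement on the slice, so you have filled in a detail the paper leaves implicit. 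For part 3 the paper argues directly that $\nu(X)(m)=0$ from the splitting $T_m(G\cdot V)=T_m(G\cdot m)\oplus T_mj(V)$, whereas you deduce it from part 2 together with the observation that $G_m$ fixes $v_0:=j^{-1}(m)$, so that $T_{v_0}(G_m\cdot v_0)=0$; both are valid. For part 4 the paper first reduces, via a $G_m$-equivariant diffeomorphism of slices, to the case where $V$ is a ball about the origin in a vector space with a linear $G_m$-representation and then computes $\psi_V(\mathbf{0})=0$ using linearity of the action; your argument --- $v_0$ is $H$-fixed, Lemma~\ref{Lemma:SameRelEqs} makes $v_0$ an $H$-relative equilibrium of $Y$, and $T_{v_0}(H\cdot v_0)=0$ forces $Y(v_0)=0$ --- reaches the same conclusion without the linearization detour and is cleaner. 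Everything hinges on your opening observation that $G_m$ (hence $H$) fixes $v_0$, which you establish correctly from the $G_m$-equivariance and injectivity of $j$.
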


\begin{proof} Parts (1) and (2) are a consequence of the fact that pullbacks and pushforwards (when these are defined), equivariant extension of vector fields, and the vertical projection of Defini\-tion~\ref{Def:SplitConnProj} preserve relative equilibria (see Lemmas~\ref{Lemma:PBRelEq},~\ref{Lemma:ExtRelEq}, and~\ref{Lemma:VertRelEq}).

For part (3), let the map
\begin{gather*}
\nu\colon \ \Gamma\big(T\big(G\times^KA\big)\big)^G\to\Gamma\big(\calv\big(G\times^KA\big)\big)^G
\end{gather*}
be the vertical projection of equivariant vector fields induced by the Lie algebra splitting (Definition~\ref{Def:SplitConnProj}). Recall that the tangent space at the point $m$ splits as
\begin{equation*}
T_m(G\cdot V)= T_m(G\cdot m)\oplus T_mj(V),
\end{equation*}
because $V$ is a slice through the point $m$ (see Remark~\ref{Rem:SliceFacts}). Hence, the vector $\nu(X)(m)$ is zero because $X(m)\in T_m(G\cdot m)$ since the point $m$ is a relative equilibrium of the vector field~$X$. Now note that the vector fields $P(X)$ and $\nu(X)$ are $j$-related by definition. Thus
\begin{gather*}
(\d j)P(X)\big(j^{-1}(m)\big)=\nu(X)(m)=0.
\end{gather*}
Since the map $j$ is an embedding, the tangent map $\d j\colon TV\to T(G\cdot V)$ is fiberwise injective. Consequently, $P(X)(j^{-1}(m))=0$, so the point $j^{-1}(m)$ is an equilibrium of the vector field~$P(X)$.

For part (4), let $Y$ be an $H$-equivariant vector field on the slice $V$ and let $\psi\in C^\infty(V,\ffh)^H$ be a map such that $Y=P(X)+\psi_V$.
Recall that there exists a slice $V'$ through the point~$m$, with corresponding $G_m$-equivariant embedding $j'\colon V' \to M$, such that $V'$ is an open ball around the origin $j'^{-1}(m)$ in a vector space with a linear representation of the stabilizer $G_m$ (see Remark~\ref{Rem:SliceExistence}). After perhaps shrinking the slices, there exists a $G_m$-equivariant diffeomorphism $\phi\colon V\to V'$ taking $j^{-1}(m)$ to $j'^{-1}(m)$ (see Remark~\ref{Rem:SliceProjIndep}). Note that the vector fields $\phi_*Y$ and $\phi_*P(X)$ on $V'$ are $H$-isomorphic (the isomorphism is given by the map~$\psi\circ\phi^{-1}$). Furthermore, the point $j'^{-1}(m)$ is an equilibrium of $\phi_*P(X)$ since the point $j^{-1}(m)$ is an equilibrium of $P(X)$ by part~(3). If the vector field $\phi_*Y$ has an equilibrium at $j'^{-1}(m)$, then the vector field $Y$ has an equilibrium at the point~$j^{-1}(m)$. Therefore, it is of no loss of generality to suppose that the slice $V$ is an open ball around the origin $j^{-1}(m)$ in a vector space with a linear representation of~$G_m$. With this assumption, note that
\begin{align*}
Y\big(j^{-1}(m)\big)
&= P(X)\big(j^{-1}(m)\big) + \psi_V\big(j^{-1}(m)\big) \\
&= \psi_V\big(j^{-1}(m)\big) \qquad \text{by part (3)} \\
&= \frac{\d}{\d t}\Big|_0 \exp(t\psi(\mathbf{0}))\cdot \mathbf{0} \qquad \text{since }j^{-1}(m)=\mathbf{0}\\
&= \frac{\d}{\d t}\Big|_0 \mathbf{0} \qquad \text{since the action is linear} \\
&= 0.
\end{align*}
Hence, the point $j^{-1}(m)$ is an equilibrium of the vector field $Y$.
\end{proof}

\begin{Remark}\label{Rem:ProjectionFunctorSubgroup}
Let $(M,G,m,V,j)$ be a proper $G$-manifold with slice.
In this paper, we will sometimes consider vector fields that are equivariant only with respect to a Lie subgroup $H$ of the full symmetry group $G$.
We view these as objects of the groupoid
\begin{gather*}
\bbx(H\times G\cdot V\toto G\cdot V):=C^\infty(G\cdot V,\ffh)^H\ltimes\Gamma(T(G\cdot V))^H
\end{gather*}
of $H$-equivariant vector fields in the tube $G\cdot V$.
Given a $G_m$-equivariant splitting $\ffg=\ffg_m\oplus\ffq$, the corresponding $G$-equivariant connection $\widetilde\Phi\in\Omega^1(G\cdot V ; \calv(G\cdot V))$ is also $H$-equivariant. As stated in Remark~\ref{Rem:VertProjIsEquiv}, the vertical projection of Definition~\ref{Def:SplitConnProj} takes $H$-equivariant vector fields to $H$-equivariant vector fields. However, we need to generalize the projection functor of Definition~\ref{Def:ProjectionFunctor} to handle the morphisms of the groupoid $\bbx(H\times G\cdot V \toto G\cdot V)$. For this, we make a~choice of $H_m$-equivariant splitting $\ffh=\ffh_m\oplus\ffp$. This gives an $H_m$-equivariant projection map $\bbp_H\colon \ffh\to\ffh_m$. With this we can generalize the projection functor of Definition~\ref{Def:ProjectionFunctor}.
\end{Remark}

\begin{Definition}[projection functor with respect to a subgroup]\label{Def:ProjectionFunctorSubgroup}
Let $(M,G,m,V,j)$ be a proper $G$-manifold with slice, let $H$ be a Lie subgroup of $G$, and suppose you are given splittings $\ffg=\ffg_m\oplus\ffq$ and $\ffh=\ffh_m\oplus\ffp$ that are $G_m$-equivariant and $H_m$-equivariant respectively. Let $\nu\colon \Gamma(T(G\cdot V))^H\to\Gamma(\calv(G\cdot V))^H$ be the vertical projection of $H$-equivariant vector fields induced by the splitting of $\ffg$ and let $\bbp_H\colon \ffh\to\ffh_m$ be the projection induced by the splitting of~$\ffh$. The {\it projection functor with respect to the subgroup $H$} is the functor
\begin{gather*}
\xy
(-25,10)*+{P_H\colon \bbx(H\times G\cdot V\toto G\cdot V)}="1";
(25,10)*+{\bbx(H_m\times V\toto V),}="2";
{\ar@{->} "1";"2"};
(-45,0)*+{\big(X}="3";
(-25,0)*+{Y\big)}="4";
{\ar@{->}^{(\psi,X),} "3";"4"};
(0,0)*+{(P_H(X)}="5";
(38,0)*+{P_H(Y)),}="6";
{\ar@{->}^{(P_H(\psi),P_H(X))} "5";"6"};
{\ar@{|->} (-15,0);(-10,0)};
\endxy
\end{gather*}
where for any vector field $X\in\Gamma(T(G\cdot V))^G$ we define
\begin{gather*}
P_H(X)\colon \ V\to TV, \qquad P_H(X):= j^*(\nu(X)),
\end{gather*}
and for any map $\psi\in C^\infty(G\cdot V,\ffh)^H$ we define
\begin{gather*}
P_H(\psi)\colon \ V\to\ffh_m, \qquad P_H(\psi):=j^*(\bbp_H\circ\psi).
\end{gather*}
\end{Definition}

\section{Stability of relative equilibria}\label{Stability}

In this section we show that stability is preserved by isomorphisms of equivariant vector fields (Proposition~\ref{Prop:IVFsSameGStable}), opening the door to replacing the given vector field with an isomorphic one that is potentially easier to work with. The main result of this section (Theorem~\ref{Thm:MainTheorem1}) is a~stability test that involves passing from the category of equivariant vector fields on a tube to the category of equivariant vector fields on a slice, which is also easier to work with.

We begin by recalling the following definition of nonlinear stability in a proper $G$-manifold due to Patrick~\cite{P91,P95}:

\begin{Definition}[stability modulo a subgroup]\label{Def:Stability}
Let $M$ be a $G$-manifold, let $X$ be a $G$-equivariant vector field on $M$, and let $H\le G$ be a Lie subgroup of~$G$. A $G$-relative equilibrium $m\in M$ of the vector field $X$ is {\it $H$-stable}, or {\it stable modulo}~$H$, if for any $H$-invariant neighborhood $U\subseteq M$ of the point $m$ there exists a neighborhood $O\subseteq U$ of the point $m$ for which all maximal integral curves of the vector field $X$ starting at points in the neighborhood~$O$ stay in the neighborhood~$U$ for all times for which they are defined.
\end{Definition}

\begin{Remark}\label{Rem:StabForFlows}
Let $X$ be a smooth vector field on a $G$-manifold $M$, and let $\phi\colon A\to M$ be its flow (Definition~\ref{Def:Flow}). Stability modulo a~subgroup $H$ (Definition~\ref{Def:Stability}) can be rephrased as saying that the relative equilibrium $m$ of the vector field $X$ is $H$-stable if for all $H$-invariant neighborhoods $U$ of the point $m$, there exists a neighborhood $O\subseteq U$, containing the point $m$, for which the flow $\phi$ of the vector field $X$ satisfies $\phi(t,q)\in U$ for all pairs $(t,q)\in A$ with $q\in O$.
\end{Remark}

The following fact about $G$-stability will be useful later:

\begin{Lemma}\label{Lemma:SubgroupStability}
Let $M$ be a $G$-manifold, let $X$ be a $G$-equivariant vector field on the mani\-fold~$M$, let $m\in M$ be a $G$-relative equilibrium of $X$, and let $H\le K$ be Lie subgroups of~$G$. If the $G$-relative equilibrium $m$ is $H$-stable, then it is $K$-stable.
\end{Lemma}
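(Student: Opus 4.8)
The plan is to reduce the conclusion of $K$-stability directly to the hypothesis of $H$-stability by comparing the two conditions at the level of invariant neighborhoods. The crucial observation is purely set-theoretic: since $H$ is a subgroup of $K$, every $K$-invariant subset of $M$ is automatically $H$-invariant, because $k \cdot U = U$ for all $k \in K$ forces $h \cdot U = U$ for all $h \in H \subseteq K$.

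First I would unwind the definition of $K$-stability (Definition~\ref{Def:Stability}) and take an arbitrary $K$-invariant neighborhood $U$ of the relative equilibrium $m$. By the observation above, $U$ is also an $H$-invariant neighborhood of $m$. Invoking the assumed $H$-stability of $m$ for this particular neighborhood $U$ then yields a neighborhood $O \subseteq U$ of $m$ such that every maximal integral curve of $X$ starting in $O$ stays in $U$ for all times for which it is defined.

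Finally, I would observe that this same $O$ verifies the defining condition of $K$-stability for the neighborhood $U$: the two definitions impose an identical escape condition and differ only in the class of neighborhoods quantified over. As $U$ was an arbitrary $K$-invariant neighborhood, it follows that $m$ is $K$-stable.

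I do not expect any real obstacle here. The entire content of the lemma is the order-reversing relationship between the size of the subgroup and the strength of the stability notion, and the only point deserving attention is orienting the implication correctly: $H$-stability quantifies over the \emph{larger} class of invariant neighborhoods and is therefore the stronger hypothesis, which is exactly what is needed to deduce the weaker conclusion of $K$-stability.
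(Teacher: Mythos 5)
Your argument is correct and is exactly the paper's proof: a $K$-invariant neighborhood is automatically $H$-invariant since $H\le K$, so $H$-stability supplies the required $O\subseteq U$, and the escape condition is the same in both definitions. No gaps.
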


\begin{proof}
Any $K$-invariant neighborhood $U$ of the point $m$ is in particular $H$-invariant since $H\le K$. Hence, we can find the required neighborhood $O\subseteq U$ by using the $H$-stability of the point~$m$.
\end{proof}

We now show that the stability of relative equilibria is preserved by morphisms of equivariant vector fields:

\begin{Proposition}\label{Prop:IVFsSameGStable}
Let $M$ be a proper $G$-manifold and let $X$ and $Y$ be two isomorphic equivariant vector fields on~$M$. If a point $m\in M$ is a $G$-stable relative equilibrium of the vector field $X$, then it is a $G$-stable relative equilibrium of the vector field~$Y$.
\end{Proposition}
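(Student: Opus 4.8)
The plan is to reduce the $G$-stability of $m$ for $Y$ to the $G$-stability of $m$ for $X$ by comparing their flows, crucially exploiting the fact that the test neighborhoods in Definition~\ref{Def:Stability} (for the case $H=G$) are required to be $G$-invariant. As a preliminary observation I would note that $m$ is automatically a relative equilibrium of $Y$ by Lemma~\ref{Lemma:SameRelEqs}, so only the stability condition remains to be verified.

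I would then invoke the two results relating the flows of isomorphic vector fields. By Corollary~\ref{Cor:SameOrbitFlow}, the maximal flows $\phi^X$ and $\phi^Y$ share a common flow domain $A$; and by Theorem~\ref{Thm:IsoFlows} there is a smoothly $t$-dependent family $\{F_t\colon M\to G\}$ satisfying
\begin{gather*}
\phi^X(t,q)=F_t(q)\cdot\phi^Y(t,q),\qquad (t,q)\in A.
\end{gather*}

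The heart of the argument is the following. Given any $G$-invariant neighborhood $U$ of $m$, I would use the $G$-stability of $m$ for $X$, phrased in the flow language of Remark~\ref{Rem:StabForFlows}, to produce a neighborhood $O\subseteq U$ of $m$ with $\phi^X(t,q)\in U$ whenever $q\in O$ and $(t,q)\in A$. I then claim the \emph{same} $O$ witnesses $G$-stability for $Y$: rewriting the flow relation as $\phi^Y(t,q)=F_t(q)^{-1}\cdot\phi^X(t,q)$ and invoking the $G$-invariance of $U$, the group factor $F_t(q)^{-1}\in G$ cannot move $\phi^X(t,q)\in U$ out of $U$, so $\phi^Y(t,q)\in U$ as well.

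The step I expect to require the most care is the bookkeeping of flow domains rather than any computation: the chosen $O$ must control $\phi^Y(t,q)$ for \emph{all} times $t$ for which the $Y$-integral curve through $q$ is defined, and this is legitimate precisely because $\phi^X$ and $\phi^Y$ have the \emph{same} domain $A$ by Corollary~\ref{Cor:SameOrbitFlow}; without that identification the comparison would only be valid over the $X$-domain. Beyond this, the proof is short because the $G$-invariance of $U$ and the group-valued discrepancy $F_t(q)$ between the two flows play exactly complementary roles.
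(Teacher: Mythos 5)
Your proposal is correct and follows essentially the same route as the paper's proof: both invoke Theorem~\ref{Thm:IsoFlows} to relate the two flows by a family $\{F_t\colon M\to G\}$, note via Corollary~\ref{Cor:SameOrbitFlow} that the flow domains coincide, and then use the $G$-invariance of the test neighborhood $U$ to conclude that the group factor cannot move points out of $U$. The only cosmetic difference is the direction in which the flow relation is written (the paper applies the theorem with the roles of $X$ and $Y$ swapped, avoiding the explicit inverse $F_t(q)^{-1}$).
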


\begin{proof}
Let $\phi^X$ and $\phi^Y$ be the maximal flows of the vector fields $X$ and $Y$ respectively. By Theorem \ref{Thm:IsoFlows} we know there exists a family of smooth maps $\{F_t\colon M\to G\}$, depending smoothly on $t$, such that
\begin{gather*}
\phi^Y(t,q)=F_t(q)\cdot \phi^X(t,q)
\end{gather*}
for all pairs $(t,q)$ for which $\phi^X$ is defined. Recall that this also shows the flows $\phi^X$ and $\phi^Y$ have the same domain (see Corollary~\ref{Cor:SameOrbitFlow}).

Now let $U\subseteq M$ be a $G$-invariant open neighborhood of the relative equilibrium~$m$. We seek a neighborhood $O\subseteq U$ of the point $m$ such that all maximal integral curves of $Y$ starting at points in $O$ stay in $U$ for all times in their domain. Since the point $m$ is $G$-stable for the vector field $X$, we know there exists a neighborhood $O\subseteq U$ of the point $m$ for which all integral curves of $X$ starting at points of $O$ stay in $U$ for all time. This means that for any point $q\in O$ and all times $t$ for which $(t,q)$ is in the domain of $\phi^Y$, we have that
\begin{gather*}
\phi^Y(t,q)=F_t(q)\cdot \phi^X(t,q)\in F_t(q)\cdot U=U,
\end{gather*}
where the last equality holds since the neighborhood $U$ is $G$-invariant. Thus, the relative equilibrium $m$ is $G$-stable for the vector field~$Y$.
\end{proof}

There is also a notion of stability for fixed points of abstract flows:

\begin{Definition}\label{Def:AbsFlowStability}
Let $z$ be a fixed point of an abstract flow $\phi\colon A\to Z$ on a topological space~$Z$. The point $z$ is {\it stable} if for all neighborhoods $U\subseteq Z$ of the point~$z$, there exists a neighborhood $O\subseteq U$, containing the point $z$, such that for all pairs $(t,q)\in A$ with $q\in O$, we have that $\phi(t,q)\in U$.
\end{Definition}

Next, we relate $G$-stability on a proper $G$-manifold with stability on the orbit space:

\begin{Lemma}\label{Lemma:OrbitSpaceStability}
Let $M$ be a proper $G$-manifold, let $\pi\colon M\to M/G$ be the quotient map, and let $X$ be an equivariant vector field on $M$ with the point $m$ as a relative equilibrium. Let $\phi\colon A\to M$ be the maximal flow of $X$ and $\Phi\colon B\to M/G$ the induced abstract flow on the orbit space $($Lemma~{\rm \ref{Lemma:InducedOrbitFlow})}. Then the relative equilibrium $m\in M$ is $G$-stable for the vector field~$X$ $($in the sense of Definition~{\rm \ref{Def:Stability})} if and only if the fixed point $\pi(m)\in M/G$ is stable for the induced flow~$\Phi$ on the orbit space $($in the sense of Definition~{\rm \ref{Def:AbsFlowStability})}.
\end{Lemma}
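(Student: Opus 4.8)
The plan is to reduce everything to the elementary topology of the orbit map $\pi\colon M\to M/G$, using two standard facts. First, the orbit map of a $G$-action is open and continuous, so the $G$-invariant open subsets of $M$ are precisely the saturated open sets $\pi^{-1}(U')$ with $U'$ open in $M/G$, and these satisfy $\pi(\pi^{-1}(U'))=U'$. Second, by the construction in Lemma~\ref{Lemma:InducedOrbitFlow} the two flows are intertwined by $\Phi(t,\pi(q))=\pi(\phi(t,q))$. With these in hand the two implications become mirror images of each other, and the correspondence between $G$-invariant neighborhoods of $m$ in $M$ and neighborhoods of $\pi(m)$ in $M/G$ does all the work.

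For the forward direction (assuming $m$ is $G$-stable) I would start with an arbitrary open neighborhood $U'\subseteq M/G$ of $\pi(m)$ and set $U:=\pi^{-1}(U')$, a $G$-invariant open neighborhood of $m$ and hence an admissible test set for Definition~\ref{Def:Stability} with $H=G$. Then $G$-stability yields a neighborhood $O\subseteq U$ of $m$, which I may shrink to an open set, such that every integral curve of $X$ starting in $O$ remains in $U$. I would take $O':=\pi(O)$, which is open because $\pi$ is open, contains $\pi(m)$, and satisfies $O'\subseteq\pi(U)\subseteq U'$. To check stability of $\pi(m)$, take $q\in O'$ and a time $t$ with $(t,q)\in B$; lift $q$ to some $p\in O$ and compute $\Phi(t,q)=\pi(\phi(t,p))\in\pi(U)\subseteq U'$, using that the curve $\phi(\cdot,p)$ stays in $U$.

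For the reverse direction (assuming $\pi(m)$ is stable for $\Phi$) I would start with a $G$-invariant open neighborhood $U$ of $m$, set $U':=\pi(U)$, which is open and satisfies $U=\pi^{-1}(U')$ by $G$-invariance, and invoke stability of $\pi(m)$ to obtain an open $O'\subseteq U'$ about $\pi(m)$. Setting $O:=\pi^{-1}(O')$ produces a $G$-invariant open neighborhood of $m$ contained in $U$, and for any $p\in O$ with $(t,p)\in A$ the relation $\pi(\phi(t,p))=\Phi(t,\pi(p))\in U'$ forces $\phi(t,p)\in\pi^{-1}(U')=U$, which is exactly the $G$-stability condition of Definition~\ref{Def:Stability}.

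I expect the only genuine subtlety to lie in the forward direction: the stability neighborhood $O$ supplied in $M$ need not be saturated, so $O$ and $\pi^{-1}(\pi(O))$ may differ. The point to get right is that this is harmless, since testing stability of the abstract flow at $\pi(m)$ only requires points $q\in O'=\pi(O)$, each of which admits a lift $p\in O$, and the equivariant intertwining relation then transports the statement in $M$ down to $M/G$. Beyond that, the argument is formal: one need only keep the directions of the inclusions straight and apply openness of $\pi$ precisely where $\pi(O)$ and $\pi(U)$ must be open.
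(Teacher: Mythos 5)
Your proposal is correct and follows essentially the same route as the paper: both directions hinge on the intertwining relation $\Phi(t,\pi(q))=\pi(\phi(t,q))$ and the correspondence between saturated open sets in $M$ and open sets in $M/G$. The only cosmetic difference is in the forward direction, where the paper explicitly saturates the stability neighborhood (forming $W=\bigcup_g g_M(V)$) before projecting, which is precisely the content of your appeal to the openness of the orbit map $\pi$; note also that your lift of $(t,q)\in B$ to $(t,p)\in A$ for an arbitrary $p\in O$ over $q$ silently uses the $G$-invariance of the flow domain $A$, which is established in the proof of Lemma~\ref{Lemma:InducedOrbitFlow}.
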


\begin{proof} First, suppose that the point $m$ is a $G$-stable relative equilibrium of the vector field $X$ and let $U\subseteq M/G$ be an open neighborhood of the point~$\pi(m)$. We seek an open neighborhood $O\subseteq U$ of $\pi(m)$ such that the curves of the abstract flow starting at points in~$O$ stay in~$U$ for all times in their domain. Note that $\pi^{-1}(U)\subseteq M$ is a $G$-invariant open neighborhood of the point~$m$. Since the relative equilibrium $m$ of the vector field $X$ is $G$-stable, there exists a neighborhood $V\subseteq \pi^{-1}(U)$ such that all maximal integral curves of $X$ starting at points of~$V$ stay in $\pi^{-1}(U)$ for all times in their domain. We need a $G$-invariant, and hence saturated, neighborhood of the point~$m$ with the same properties as~$V$. Define
\begin{gather*}
W:=\bigcup_{g\in G}g_M(V).
\end{gather*}
As desired, this set is open, it is contained in $\pi^{-1}(U)$, and all maximal integral curves of $X$ starting at points in $W$ stay in $\pi^{-1}(U)$ for all times in their domain. This set is open since it is the union of the sets $g_M(V)$, each of which are in turn open because the group translations $g_M\colon M\to M$ are diffeomorphisms. We know that the neighborhood $W$ is contained in the neighborhood $\pi^{-1}(U)$ because $V\subseteq \pi^{-1}(U)$ and the neighborhood $\pi^{-1}(U)$ is $G$-invariant. To verify the last property, let $q\in V$ and $g\in G$, so that $g\cdot q\in W$ is an arbitrary point in $W$. By the choice of $V$, the maximal integral curve $\phi(\cdot,q)$ of~$X$ starting at $q$ stays in~$\pi^{-1}(U)$ for all times for which it is defined. Thus, by the $G$-equivariance of the flow and the $G$-invariance of~$\pi^{-1}(U)$
\begin{gather*}
\phi(t,g\cdot q)=g\cdot \phi(t,q)\in g_M\big(\pi^{-1}(U)\big)=\pi^{-1}(U)
\end{gather*}
for all times $t$ such that $(t,g\cdot q)\in A$. Hence, $W$ is as claimed.

Now consider the set $O:=\pi(W)\subseteq \pi(\pi^{-1}(U))\subseteq U$. The set $O$ is an open neighborhood of~$\pi(m)$ since the set $W$ is a~$G$-invariant, and hence saturated, open neighborhood of $m$ in $M$. It is contained in $U$ since $\pi(W)\subseteq\pi(\pi^{-1}(U))\subseteq U$. Furthermore, the curves of the abstract flow starting at points in $O$ stay in $U$ for all times in their domain. To verify this last statement, let $q\in W$, so that $\pi(q)\in O$ is an arbitrary point in~$O$. Observe that, by diagram (\ref{Diag:OrbitFlow}), we have that
\begin{gather*}
\Phi(t,\pi(q))=\pi(\phi(t,q))
\end{gather*}
for all $t$ such that $(t,\pi(q))\in B$. Hence, by the choice of $W$, for all times $t$ such that $(t,\pi(q))\in B$, we have that
\begin{gather*}
\Phi(t,\pi(q))=\pi(\phi(t,q))\in\pi\big(\pi^{-1}(U)\big)\subseteq U.
\end{gather*}
Hence, the point $\pi(m)$ is stable in the sense of Definition~\ref{Def:AbsFlowStability}.

Conversely, let the point $\pi(m)$ be stable in the sense of Definition~\ref{Def:AbsFlowStability} and let $U\subseteq M$ be an open $G$-invariant neighborhood of the relative equilibrium~$m$. We seek an open neighborhood $O\subseteq U$ of the point $m$ such that the maximal integral curves of $X$ starting at points in $O$ stay in $U$ for all times in their domain. Since the open set $U$ is $G$-invariant, and hence saturated, the set $\pi(U)\subseteq M/G$ is an open neighborhood of the point $\pi(m)$. The stability of the point $\pi(m)$ implies that there exists a neighborhood $V\subseteq \pi(U)$ of the point $\pi(m)$ such that the curves of the abstract flow starting at points of in $V$ stay in $\pi(U)$ for all times in their domain.

Now consider the open neighborhood $O:=\pi^{-1}(V)$ of the point $m$. Note that the neighborhood $O$ is contained in $U$ since $O=\pi^{-1}(V)\subseteq\pi^{-1}(\pi(U))=U$; where we use that $U$ is $G$-invariant. Furthermore, for all points $q\in O$ the maximal integral curve $\phi(\cdot,q)$ is such that
\begin{gather*}
\pi(\phi(t,q))=\Phi(t,\pi(q))\in \pi(U)
\end{gather*}
for all times $t$ with $(t,q)\in A$. Thus, by the $G$-invariance of the neighborhood $U$, we know that
\begin{gather*}
\phi(t,q)\in \pi^{-1}(\pi(U))=U
\end{gather*}
for all times $t$ with $(t,q)\in A$. Hence, the relative equilibrium $m$ of the vector field $X$ is $G$-stable in the sense of Definition~\ref{Def:Stability}.
\end{proof}

\begin{Remark} Lemma \ref{Lemma:OrbitSpaceStability} says that stability of a relative equilibrium reduces to stability of the fixed point of the induced flow on the orbit space. If the orbit space is a manifold, for example when the action is free and proper, then one can appeal to the vast literature on stability of fixed points to test for stability. However, if the action is not free, the orbit space is in general not a manifold.
In that case we must appeal to other arguments like the ones presented in this paper. Proposition~\ref{Prop:IVFsSameGStable} is key in doing this.
\end{Remark}

Lemma \ref{Lemma:OrbitSpaceStability} provides another way to show Proposition \ref{Prop:IVFsSameGStable}:

\begin{proof}
By Lemma~\ref{Lemma:SameRelEqs}, the relative equilibrium $m$ of the vector field $X$ is also a relative equilibrium of the vector field $Y$.
By Lemma~\ref{Lemma:OrbitSpaceStability}, the $G$-stability of the relative equilibrium~$m$ of $Y$ corresponds to the stability (in the sense of Definition~\ref{Def:AbsFlowStability}) of the corresponding fixed point of the induced orbit space flow. On the other hand, by Corollary~\ref{Cor:SameOrbitFlow}, the vector fields $X$ and $Y$ induce the same abstract flow on the orbit space. Thus, the $G$-stability of the relative equilibrium $m$ for the vector field $X$ implies the stability of the corresponding fixed point of the abstract flow on the orbit space induced by~$Y$. Hence, the relative equilibrium $m$ of the vector field $Y$ is $G$-stable.
\end{proof}

To prove the slice stability criterion (Theorem~\ref{Thm:MainTheorem1}) we need to show that the equivariant extension functor of Definition~\ref{Def:ExtensionFunctor} preserves the stability of relative equilbiria. For this, we need the following two lemmas:

\begin{Lemma}\label{Lemma:fRelatedStability}
Let $M$ and $N$ be proper $G$-manifolds and let $f\colon M\to N$ be a $G$-equivariant diffeomorphism. Suppose that $X$ and $Y$ are $f$-related equivariant vector fields on $M$ and $N$ respectively. Then a point $m$ is a $G$-stable $G$-relative equilibrium of the vector field $X$ if and only if the point $f(m)$ is a $G$-stable $G$-relative equilibrium of the vector field $Y$. In particular, the pushforward and pullback of vector fields by the diffeomorphism $f$ preserve stability of relative equilibria.
\end{Lemma}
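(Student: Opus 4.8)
The plan is to reduce the statement to the standard fact that a diffeomorphism conjugates the flows of $f$-related vector fields, and then to transport the stability condition across $f$. The relative-equilibrium part of the claim is already supplied by Lemma~\ref{Lemma:PBRelEq}: since $X$ and $Y$ are $f$-related with $f$ a $G$-equivariant diffeomorphism, $m$ is a $G$-relative equilibrium of $X$ precisely when $f(m)$ is a $G$-relative equilibrium of $Y$. So the only remaining work is to transfer $G$-stability in the sense of Definition~\ref{Def:Stability}.

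First I would record the flow relationship. Because $X$ and $Y$ are $f$-related, for each $q\in M$ the curve $t\mapsto f(\phi^X(t,q))$ satisfies $\frac{\d}{\d t}f(\phi^X(t,q))=\d f(X(\phi^X(t,q)))=Y(f(\phi^X(t,q)))$ and starts at $f(q)$, so it is an integral curve of $Y$. Since $f$ is a diffeomorphism it is in fact the \emph{maximal} integral curve of $Y$ through $f(q)$; hence $\phi^Y(t,f(q))=f(\phi^X(t,q))$, and the flow domains correspond exactly, with $(t,q)$ in the domain of $\phi^X$ if and only if $(t,f(q))$ is in the domain of $\phi^Y$. This domain correspondence is the one technical point that needs care, and it is precisely what lets the phrase ``for all times for which they are defined'' pass across $f$.

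With this established, the stability transfer is short. Assuming $m$ is $G$-stable for $X$, I would take a $G$-invariant neighborhood $U\subseteq N$ of $f(m)$; by equivariance of $f$ its preimage $f^{-1}(U)$ is a $G$-invariant neighborhood of $m$, so $G$-stability yields a neighborhood $O\subseteq f^{-1}(U)$ of $m$ all of whose integral curves under $X$ remain in $f^{-1}(U)$. Setting $O':=f(O)$, a neighborhood of $f(m)$ contained in $U$, the flow relation gives, for $f(q)\in O'$ with $q\in O$, that $\phi^Y(t,f(q))=f(\phi^X(t,q))\in f(f^{-1}(U))=U$ for all admissible $t$, which is exactly the condition in Definition~\ref{Def:Stability}. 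Thus $f(m)$ is $G$-stable for $Y$.

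The converse follows by symmetry: $f^{-1}$ is again a $G$-equivariant diffeomorphism, and it $f^{-1}$-relates $Y$ to $X$, so the same argument applies with the roles of $(M,X)$ and $(N,Y)$ reversed. The closing assertion about pushforwards and pullbacks is then immediate, since the pushforward $f_*X$ is by construction $f$-related to $X$. I do not anticipate any genuine obstacle beyond the flow-domain bookkeeping noted above.
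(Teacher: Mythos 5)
Your proposal is correct and follows essentially the same route as the paper's proof: pull back the $G$-invariant neighborhood $U$ along $f$, apply $G$-stability of $m$ to get a neighborhood inside $f^{-1}(U)$, push it forward by $f$, and use the fact that $f$ carries maximal integral curves of $X$ to maximal integral curves of $Y$ (with matching domains) to conclude. The only cosmetic difference is that you phrase the conjugation via the flow maps $\phi^X$, $\phi^Y$ while the paper works directly with the curves $\gamma_q$ and $f^{-1}\circ\gamma_q$; the substance, including the domain bookkeeping, is identical.
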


\begin{proof}
Suppose first that the relative equilibrium $m$ is $G$-stable. Let $U\subseteq N$ be a $G$-invariant neighborhood of the point $f(m)$. We seek a neighborhood $O\subseteq U$ of the point $f(m)$ such that all maximal integral curves of $Y$ starting at points in $O$ stay in $U$ for all times in their domain. By the equivariance of $f$ and the $G$-invariance of the set $U$, the open set $f^{-1}(U)$ is a $G$-invariant neighborhood of the point~$m$. By the $G$-stability of the point $m$, there exists a neighborhood $W\subseteq f^{-1}(U)$ of the point $m$ such that the maximal integral curves of~$X$ starting at points of~$W$ stay in the set $U$ for all times in their domain.

Consider the set $O:=f(W)$. It is open since the map $f$ is a diffeomorphism. It is contained in the neighborhood $U$ since $f$ is a diffeomorphism and $W\subseteq f^{-1}(U)$. Consider an arbitrary point $q\in O$ and let $\gamma_q$ be the maximal integral curve of the vector field $Y$ starting at the point~$q$. Since the vector fields $X$ and $Y$ are $f$-related, the curve~$f^{-1}\circ \gamma_q$ is the maximal integral curve of~$X$ starting at the point $f^{-1}(q)\in W$, and it is defined for the same times that $\gamma_q$ is. By the choice of~$W$, we know that $f^{-1}(\gamma_q(t))\in f^{-1}(U)$ for all times $t$ such that the curve is defined. Hence, $\gamma_q(t)=f(f^{-1}(\gamma_q(t)))\in f(f^{-1}( U))=U$ for all times $t$ for which the curve is defined. Therefore, the relative equilibrium $f(m)$ is $G$-stable for~$Y$. The converse is completely analogous.
\end{proof}

\begin{Lemma}\label{Lemma:ExtensionStability}
Let $M$ be a proper $G$-manifold, let $K$ be a Lie subgroup of $G$, and let $A$ be a $K$-invariant regular submanifold of $M$ satisfying the hypotheses of Lemma~{\rm \ref{Lemma:EquivExtension}}. Suppose that $X$ is a $K$-equivariant vector field on $A$ and that the point $a\in A$ is a $K$-stable $K$-relative equilibrium of~$X$. Then the point $a$ is a $G$-stable $G$-relative equilibrium of the equivariant extension $\varepsilon X$ of~$X$.
\end{Lemma}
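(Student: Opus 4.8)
The plan is to deduce the $G$-stability of $a$ for $\varepsilon X$ from the given $K$-stability of $a$ for $X$ by transporting neighborhoods and integral curves across the inclusion $\iota\colon A\hookrightarrow G\cdot A$ and the bundle identification $G\cdot A\cong G\times^K A$ supplied by the hypotheses of Lemma~\ref{Lemma:EquivExtension}. First I would record, via Lemma~\ref{Lemma:ExtRelEq}, that $a$ is a $G$-relative equilibrium of $\varepsilon X$, so that $G$-stability is meaningful. I would also recall from the proof of that lemma that $X$ and $\varepsilon X$ are $\iota$-related, with $\varepsilon X$ restricting to $X$ on $A$; consequently the maximal integral curve of $\varepsilon X$ through a point of $A$ stays in $A$ and coincides with the maximal integral curve of $X$ through that point, on the same time interval.

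Next, given an arbitrary $G$-invariant open neighborhood $U\subseteq G\cdot A$ of $a$, I would set $U':=U\cap A$. Since $U$ is $G$-invariant it is in particular $K$-invariant, and $A$ is $K$-invariant, so $U'$ is a $K$-invariant open neighborhood of $a$ in $A$. Applying the $K$-stability of $a$ for $X$ to $U'$ produces an open neighborhood $O'\subseteq U'$ of $a$ in $A$ such that every maximal integral curve of $X$ starting in $O'$ stays in $U'\subseteq U$ for all times in its domain. I would then take $O:=G\cdot O'$ as the candidate neighborhood in the tube. The key technical point is that $O$ is open in $G\cdot A$: under the diffeomorphism $G\times^K A\cong G\cdot A$, the set $O$ is the image of the open set $G\times O'$ under the orbit projection $G\times A\to G\times^K A$, which is an open map, so $O$ is open. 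It contains $a$, and since $O'\subseteq U$ with $U$ being $G$-invariant we have $O=G\cdot O'\subseteq U$.

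Finally, I would verify the flow condition. For $p\in O$ choose $g\in G$ and $x\in O'$ with $p=g\cdot x$. Using the $G$-equivariance of the flow of the $G$-equivariant field $\varepsilon X$ together with the coincidence of the $\varepsilon X$-curve and the $X$-curve through $x\in A$, we obtain
\begin{gather*}
\phi^{\varepsilon X}(t,p)=g\cdot \phi^{\varepsilon X}(t,x)=g\cdot \phi^{X}(t,x)\in g\cdot U'\subseteq g\cdot U=U
\end{gather*}
for every $t$ in the domain of the curve, where the containment uses $\phi^X(t,x)\in U'$ and the last equality uses the $G$-invariance of $U$. This shows that $a$ is $G$-stable for $\varepsilon X$.

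The main obstacle I anticipate is precisely the openness of the saturation $O=G\cdot O'$; once the bundle identification of Lemma~\ref{Lemma:EquivExtension} is invoked this follows from openness of the orbit projection, and everything else is bookkeeping with equivariance and $\iota$-relatedness. As an alternative, one could route the same content through the orbit-space picture, identifying $(G\cdot A)/G\cong A/K$, matching the induced abstract flows, and converting both stabilities into stability of the common fixed point via Lemma~\ref{Lemma:OrbitSpaceStability}; I would keep the direct argument above as the primary route, since it avoids the extra step of checking that the two induced orbit-space flows coincide.
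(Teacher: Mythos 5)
Your proposal is correct and follows essentially the same route as the paper's proof: intersect the $G$-invariant neighborhood with $A$, invoke the $K$-stability of $a$ for $X$, saturate the resulting neighborhood by $G$, and conclude via the $\iota$-relatedness of $X$ and $\varepsilon X$ together with the $G$-equivariance of the flow of $\varepsilon X$. The only (cosmetic) difference is how openness of the saturation is justified: the paper writes it as a union of translates $g_M(W)$ of an open set $W$ in the tube, whereas you use openness of the orbit projection $G\times A\to G\times^K A$, which is an equally valid and arguably cleaner way to make the same point.
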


\begin{proof} By Lemma \ref{Lemma:ExtRelEq}, we know that the point $a$ is a relative equilibrium of the equivariant extension $\varepsilon X$. Hence, it remains to show that the relative equilibrium is $G$-stable. Let $\iota\colon A\hookrightarrow G\cdot A$ be the inclusion map of the submanifold $A$, and let $U\subseteq G\cdot A$ be an arbitrary $G$-invariant neighborhood of the point $a$. We seek an open neighborhood $O\subseteq U$ of the point $a$ such that all maximal integral curves of the vector field $\varepsilon X$ starting at points in $O$ stay in $U$ for all time. Observe that the set $U\cap A$ is a $K$-invariant neighborhood in the subspace topology of $A$. Hence, there exists an open set $W$ in $G\cdot A$ such that $W\cap A$ is contained in $U\cap A$, and the maximal integral curves of $X$ starting at points in the set $W\cap A$ stay in $U\cap A$ for all times in their domains.

Consider the saturation $O:=G\cdot W=G\cdot (W\cap A)$. This set is open in $G\cdot A$ since it is the union, over all elements $g\in G$, of the open sets $g_M(W)$. Also note that for all $g\in G$ we have $g_M(W\cap A)\subseteq g_M(U\cap A)=U$, where the last equality uses the $G$-invariance of $U$. Hence, the neighborhood $O$ is contained in $U$.

Now let $w\in W\cap A$ and $g\in G$, so that $q=g\cdot w\in O$ is an arbitrary point in $O$. Let~$\gamma_q$ and~$\gamma_w$ be the maximal integral curves of the equivariant extension $\varepsilon X$ starting at the points~$q$ and~$w$ respectively. Since the vector fields $\varepsilon X$ and $X$ are $\iota$-related, the curve $\gamma_w$ is also the maximal integral curve of the vector field $X$ starting at the point~$w$. Consequently, by the choice of $W$ and the $K$-stability of the relative equilibrium $a$ of $X$, we have that $\gamma_w(t)\in U\cap A$ for all times~$t$ for which it is defined. This and the $G$-equivariance of the flow of $\varepsilon X$ implies that, for all times~$t$ for which the integral curve~$\gamma_q$ is defined, we have that
\begin{gather*}
\gamma_q(t)=g\cdot \gamma_w(t)\in g_M(U\cap A)=U,
\end{gather*}
where we also use the $G$-invariance of the set $U$. Therefore, the point $a$ is $G$-stable for the vector field $\varepsilon X$.
\end{proof}

\begin{Proposition}\label{Prop:EStability}
Let $(M,G,m,V,j)$ be a proper $G$-manifold with slice, let $E\colon \bbx(G_m\times V\toto V)\to \bbx(G\times G\cdot V \toto G\cdot V)$ be the equivariant extension functor $($Definition~{\rm \ref{Def:ExtensionFunctor})}, and let $X$ be a $G_m$-equivariant vector field on the slice $V$.
Then if a point $v\in V$ is a $G_m$-stable $G_m$-relative equilibrium of the vector field $X$, the point $j(v)$ is a $G$-stable $G$-relative equilibrium of the vector field $E(X)$ on the tube $G\cdot V$. That is, the functor $E$ preserves stability of relative equilibria.
\end{Proposition}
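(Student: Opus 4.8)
The plan is to exploit the factorization of the equivariant extension functor recorded in the remark following Definition~\ref{Def:ExtensionFunctor}, namely $E(X)=\varepsilon(j_*X)$, where $j_*X$ is the pushforward of $X$ to the image $j(V)$ and $\varepsilon$ is the equivariant extension of Lemma~\ref{Lemma:EquivExtension}. Since we already know from Lemma~\ref{Lemma:fRelatedStability} that pushforwards by equivariant diffeomorphisms preserve stability of relative equilibria, and from Lemma~\ref{Lemma:ExtensionStability} that equivariant extension preserves it as well, the proof reduces to composing these two results with the appropriate choices of structure group.

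First I would regard $j\colon V\to j(V)$ as a $G_m$-equivariant diffeomorphism onto its image $A:=j(V)$, which is a $G_m$-invariant regular submanifold of the tube $G\cdot V$. The vector fields $X$ and $j_*X$ are $j$-related by construction, so applying Lemma~\ref{Lemma:fRelatedStability} with the group taken to be $G_m$ shows that the $G_m$-stable $G_m$-relative equilibrium $v$ of $X$ is carried to a $G_m$-stable $G_m$-relative equilibrium $j(v)$ of the pushforward $j_*X$ on $A$.

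Second I would apply Lemma~\ref{Lemma:ExtensionStability} with $K=G_m$, with the submanifold $A=j(V)$, and with the $G_m$-equivariant vector field $j_*X$. The hypotheses of Lemma~\ref{Lemma:EquivExtension} needed here hold precisely because $V$ is a slice: condition~(3) of Definition~\ref{Def:Slices} states that $(g,v)\mapsto g\cdot j(v)$ descends to a $G$-equivariant diffeomorphism $G\times^{G_m}V\to G\cdot j(V)$, which transports to the corresponding statement for $A$ under the diffeomorphism $j\colon V\to A$. Lemma~\ref{Lemma:ExtensionStability} then upgrades the $G_m$-stability of $j(v)$ for $j_*X$ to $G$-stability of $j(v)$ for the extension $\varepsilon(j_*X)=E(X)$, which is exactly the claim.

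The argument is essentially bookkeeping: the only points requiring care are the change of structure group (working with $G_m$ in the first step and then passing to $G$ in the second) and verifying that $A=j(V)$ together with the slice axioms meets the hypotheses of Lemma~\ref{Lemma:EquivExtension}; both are immediate from Definition~\ref{Def:Slices}. I do not expect a genuine analytic obstacle, since all of the flow-tracking work has already been carried out inside Lemmas~\ref{Lemma:fRelatedStability} and~\ref{Lemma:ExtensionStability}.
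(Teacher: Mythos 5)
Your proposal is correct and follows essentially the same route as the paper's own proof: both factor $E(X)=\varepsilon(j_*X)$ and apply Lemma~\ref{Lemma:fRelatedStability} to the $G_m$-equivariant diffeomorphism $j\colon V\to j(V)$ followed by Lemma~\ref{Lemma:ExtensionStability} with $K=G_m$ and $A=j(V)$. Your additional remark checking that the slice axioms supply the hypotheses of Lemma~\ref{Lemma:EquivExtension} is a detail the paper leaves implicit, but the argument is the same.
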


\begin{proof} We have already shown that the point $j(v)$ is a $G$-relative equilibrium of the vector field~$E(X)$ in part~(1) of Proposition~\ref{Prop:RelEqPres}, so it remains to show the statement concerning stability. Apply Lemma~\ref{Lemma:fRelatedStability} to the $G_m$-equivariant diffeomorphism $j\colon V\to j(V)$. This shows that the point $j(v)$ is $G_m$-stable for the vector field $j_*X$ on the regular submanifold $j(V)$. Now apply Lemma~\ref{Lemma:ExtensionStability} with $K=G_m$, $A=j(V)$, and the vector field $j_*X$. This shows that the point~$j(v)$ is $G$-stable for the vector field $E(X)=\varepsilon j_*X$ on the tube $G\cdot V$, which is what we wanted to prove.
\end{proof}

The following theorem provides a criterion for $G$-stability of a relative equilibrium and is the main theorem of this section:

\begin{Theorem}[slice stability criterion]\label{Thm:MainTheorem1}
Let $(M,G,m,V,j)$ be a proper $G$-manifold with slice, let $P\colon \bbx(G\times G\cdot V\toto G\cdot V)\to \bbx(G_m\times V\toto V)$ be the projection functor corresponding to a~choice of $G_m$-equivariant splitting $\ffg=\ffg_m\oplus\ffq$ $($Definition~{\rm \ref{Def:ProjectionFunctor})}, and let $X$ be a~$G$-equivariant vector field on the tube $G\cdot V$ with the point $m$ as a~$G$-relative equilibrium. Suppose there exists a~Lie subgroup $H$ of the stabilizer $G_m$ and an $H$-equivariant vector field $Y$ on the slice $V$ such that
\begin{enumerate}\itemsep=0pt
\item[$1)$] the vector field $Y$ is $H$-isomorphic to the projected vector field $P(X)$;
\item[$2)$] the point $j^{-1}(m)\in V$ is an $H$-stable equilibrium of the vector field $Y$.
\end{enumerate}
Then the $G$-relative equilibrium $m$ of the vector field $X$ is $G$-stable.
\end{Theorem}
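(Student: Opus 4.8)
The plan is to move the two hypotheses, which live on the slice $V$, up to the tube $G\cdot V$ by chaining together the stability-preservation results already established. The backbone is Lerman's equivalence (Theorem~\ref{Thm:LermanEquivalence}): since $E\circ P\simeq\id$, the natural isomorphism $\alpha_X=(\psi^X,E(P(X)))$ exhibits $X=E(P(X))+\psi^X_{G\cdot V}$, so $X$ is $G$-isomorphic to the equivariant extension $E(P(X))$. Hence, by Proposition~\ref{Prop:IVFsSameGStable}, it will suffice to prove that $m$ is a $G$-stable $G$-relative equilibrium of $E(P(X))$. By Proposition~\ref{Prop:EStability} the extension functor preserves stability of relative equilibria, so this in turn reduces to showing that $j^{-1}(m)$ is a $G_m$-stable $G_m$-relative equilibrium of the projected field $P(X)$ on the slice.

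To establish that, I would first record via part~(3) of Proposition~\ref{Prop:RelEqPres} that, because $m$ is a $G$-relative equilibrium of $X$, the point $j^{-1}(m)$ is in fact an \emph{equilibrium} of $P(X)$, and in particular a $G_m$-relative equilibrium. What remains is purely a statement about the stability of this equilibrium, and this is where the field $Y$ enters. Since $H\le G_m$, the $G_m$-equivariant field $P(X)$ is a fortiori $H$-equivariant, so $Y$ and $P(X)$ are objects of the same groupoid $\bbx(H\times V\toto V)$ and are $H$-isomorphic by hypothesis~(1). Viewing $V$ as a proper $H$-manifold (the $G_m$-action, hence the $H$-action, is proper because the stabilizer $G_m$ is compact), Proposition~\ref{Prop:IVFsSameGStable} applied at the level of $H$ transports the $H$-stability of the equilibrium $j^{-1}(m)$ of $Y$, given by hypothesis~(2), to $H$-stability of $j^{-1}(m)$ for $P(X)$.

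The final ingredient on the slice is to upgrade $H$-stability to $G_m$-stability. Here I would invoke Lemma~\ref{Lemma:SubgroupStability} with acting group $G_m$ and the chain $H\le G_m$: since $j^{-1}(m)$ is a $G_m$-relative equilibrium of the $G_m$-equivariant field $P(X)$ that is $H$-stable, it is $G_m$-stable. Feeding this conclusion through Proposition~\ref{Prop:EStability} (to obtain $G$-stability of $m$ for $E(P(X))$) and then through Proposition~\ref{Prop:IVFsSameGStable} and the isomorphism $X\cong E(P(X))$ of Definition~\ref{Def:IsoVFs} completes the argument.

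The relative-equilibrium bookkeeping at each stage is routine, being packaged in Proposition~\ref{Prop:RelEqPres}. The step requiring the most care is keeping straight the three groups $H\le G_m\le G$: Proposition~\ref{Prop:IVFsSameGStable} must be applied once with $H$ acting on the slice (to cross the $H$-isomorphism $Y\cong P(X)$) and once with $G$ acting on the tube (to cross $X\cong E(P(X))$), while the pivotal passage $H\text{-stable}\Rightarrow G_m\text{-stable}$ via Lemma~\ref{Lemma:SubgroupStability} can only be carried out on $P(X)$ and not on $Y$, since $Y$ is assumed equivariant only under $H$ and need not be $G_m$-equivariant.
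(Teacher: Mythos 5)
Your proposal is correct and follows essentially the same route as the paper's own proof: transport $H$-stability from $Y$ to $P(X)$ via Proposition~\ref{Prop:IVFsSameGStable}, upgrade to $G_m$-stability with Lemma~\ref{Lemma:SubgroupStability}, push up to $E(P(X))$ with Proposition~\ref{Prop:EStability}, and finish with the $G$-isomorphism $X\cong E(P(X))$ from Theorem~\ref{Thm:LermanEquivalence} together with Proposition~\ref{Prop:IVFsSameGStable} again. The only difference is presentational (you argue by successive reductions, the paper argues forward), and your closing remark about which group acts where at each application is exactly the bookkeeping the paper's argument relies on.
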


\begin{Remark}\label{Rem:OnMainThm1} Before proceeding with the proof of Theorem \ref{Thm:MainTheorem1}, we make some observations:
\begin{enumerate}\itemsep=0pt
\item By Proposition \ref{Prop:RelEqPres}, we know that the point $j^{-1}(m)$ is an equilibrium of the vector field~$P(X)$. The same proposition also shows that the point $j^{-1}(m)$ is an equilibrium of any vector field isomorphic to $P(X)$ with respect to any Lie subgroup of the stabilizer~$G_m$.
\item Since we can choose the slice $V$ to be an open ball around the origin in a finite dimensional vector space with a representation of the stabilizer $G_m$ (see part (2) of Remark~\ref{Rem:SliceFacts}), Theo\-rem~\ref{Thm:MainTheorem1} reduces determining nonlinear stability of a~relative equilibrium on a manifold to determining stability of an equilibrium on a vector space. Furthermore, it replaces dealing with an action of a possibly noncompact Lie group with dealing with a representation of a compact Lie group. This is desirable since there is a vast literature available on the stability of equilibria on vector spaces with representations of compact Lie groups (see, for example, Field~\cite{F07} and the references therein).
\item Any choice of slice or projection leads to isomorphic projected vector fields (see Remark~\ref{Rem:SliceProjIndep}). Hence, one may choose any convenient slice and projection to apply Theo\-rem~\ref{Thm:MainTheorem1}.
\item Given a $G$-equivariant vector field with a $G$-relative equilibrium for which we want to test for stability, Theorem~\ref{Thm:MainTheorem1} allows us to break the symmetry of the projected vector field by considering an isomorphic vector field with respect to a subgroup of the symmetry group. Such a replacement of the given vector field by an isomorphic one will be illustrated in our application (Theorem~\ref{Thm:MainTheorem2}) in Section~\ref{HamStability}.
\end{enumerate}
\end{Remark}

We now proceed with the proof of Theorem \ref{Thm:MainTheorem1}:

\begin{proof} Let $E\colon \bbx(G_m\times V\toto V)\to \bbx(G\times G\cdot V\toto G\cdot V)$ be the equivariant extension functor (see Definition~\ref{Def:ExtensionFunctor}). The proof uses that stability with respect to a subgroup implies stability with respect to the larger group (Lemma~\ref{Lemma:SubgroupStability}), that morphisms of equivariant vector fields and the equivariant extension functor preserve stability (Propositions~\ref{Prop:IVFsSameGStable} and~\ref{Prop:EStability}), and that the functors $E$ and $P$ form an equivalence of categories (Theorem~\ref{Thm:LermanEquivalence}). The argument is as follows.

Since the vector fields $Y$ and $P(X)$ are $H$-isomorphic, Proposition \ref{Prop:IVFsSameGStable} implies that the point~$j^{-1}(m)$ is $H$-stable for the vector field~$P(X)$. Since~$H$ is a subgroup of the stabilizer~$G_m$, Lemma~\ref{Lemma:SubgroupStability} implies that the point~$j^{-1}(m)$ is $G_m$-stable for the vector field $P(X)$. By Proposition~\ref{Prop:EStability}, the point $m$ is $G$-stable for the equivariant extension $E(P(X))$. Now note that the vector fields~$X$ and~$E(P(X))$ are $G$-isomorphic by Theorem~\ref{Thm:LermanEquivalence}.
Thus, by Lemma \ref{Lemma:SameRelEqs}, the $G$-relative equilibrium $m$ of the vector field $X$ is also a $G$-relative equilibrium of the vector field~$E(P(X))$. Finally, by Proposition \ref{Prop:IVFsSameGStable}, the $G$-relative equilibrium $m$ of the vector field $X$ is $G$-stable since it is $G$-stable for the isomorphic vector field $E(P(X))$.
\end{proof}

\section{Stability of Hamiltonian relative equilibria}\label{HamStability}

In this section we apply the slice stability criterion (Theorem \ref{Thm:MainTheorem1}) to Hamiltonian relative equilibria. We recall the definition of an equivariant momentum map:

\begin{Definition}\label{Def:MomentumMap}
Let $M$ be a symplectic manifold with an action of a Lie group $G$ by Hamiltonian symplectomorphisms. Suppose the symplectic form $\omega$ is $G$-invariant. A smooth map $\Phi\colon M\to \ffg^*$ is an equivariant {\it momentum map} for the action of $G$ on the symplectic manifold $M$ if:
\begin{itemize}\itemsep=0pt
\item the map $\Phi$ is equivariant with respect to the given action on $M$ and the coadjoint representation on $\ffg^*$;
\item for all vectors $\xi\in\ffg$, the function
\begin{gather*}
\langle\Phi,\xi\rangle\colon \ M\to\bbr, \qquad m\mapsto \langle\Phi(m),\xi\rangle,
\end{gather*}
where $\langle\cdot,\cdot\rangle\colon \ffg^*\times\ffg\to\bbr$ is the pairing of the Lie algebra $\ffg$ and its dual $\ffg^*$, is a Hamiltonian function for the fundamental vector field $\xi_M$.
\end{itemize}
\end{Definition}

Throughout this section we will work in the following settings:

\begin{Definition}\label{Def:HamGSpace}
A {\it Hamiltonian $G$-space} is a quadruple $(M,\omega,G,\Phi)$ where $M$ is a proper $G$-manifold such that
\begin{itemize}\itemsep=0pt
\item[1)] the manifold $M$ is symplectic with corresponding $G$-invariant symplectic form $\omega$;
\item[2)] the action of the Lie group $G$ is by Hamiltonian symplectomorphisms;
\item[3)] the map $\Phi\colon M\to\ffg^*$ is an equivariant momentum map for the action.
\end{itemize}
\end{Definition}

\begin{Definition}\label{Def:HamGSystem} A {\it Hamiltonian $G$-system} is a quintuple $(M,\omega, G, \Phi, h)$ where the quadruple $(M,\omega,G,\Phi)$ is a Hamiltonian $G$-space and the function $h\colon M\to\bbr$ is a smooth $G$-invariant function called the {\it Hamiltonian function} of the system.
\end{Definition}

We are interested in the stability of $G$-relative equilibria of Hamiltonian vector fields. In particular, we determine stability with respect to the following group:

\begin{Definition}\label{Def:MomentIsotropy} Let $(M,\omega,G,\Phi)$ be a Hamiltonian $G$-space and let $m$ be a point in the manifold~$M$. The covector $\mu:=\Phi(m)\in\ffg^*$ is called the {\it moment} of the point~$m$, and the Lie subgroup of $G$ defined by
\begin{gather*}
G_\mu:=\big\{g\in G\,|\, \CoAd(g)\mu=\mu\big\}
\end{gather*}
is called the {\it moment isotropy group} of the point $m$. We denote by $\ffg_\mu$ the Lie algebra of the moment isotropy group.
\end{Definition}

Hamiltonian relative equilibria of Hamiltonian $G$-systems have the following well-known cha\-rac\-terization (see, for example, \cite[Theorem~4.1]{M92}):

\begin{Lemma}\label{Lemma:HamRelEqChar}
Let $(M,\omega,G,\Phi,h)$ be a Hamiltonian $G$-system, let $\Xi_h\in\Gamma(TM)^G$ be the Hamiltonian vector field of the function $h$, and let $m\in M$ be a point with moment $\mu=\Phi(m)$. The following are equivalent:
\begin{enumerate}\itemsep=0pt
\item[$1)$] the point $m$ is a $G$-relative equilibrium of the Hamiltonian vector field $\Xi_h$;
\item[$2)$] there exists a velocity vector $\xi\in\ffg_\mu$ of the point $m$; that is, $\xi\in\ffg_\mu$ is such that $\Xi_h(m)=\xi_M(m)$;
\item[$3)$] the point $m\in M$ is a critical point of the function $h^\xi:=h-\langle\Phi,\xi\rangle$.
\end{enumerate}
\end{Lemma}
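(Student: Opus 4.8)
The plan is to prove the two equivalences $(2)\Leftrightarrow(3)$ and $(1)\Leftrightarrow(2)$, from which the three-way equivalence follows at once. The implication $(2)\Rightarrow(1)$ is immediate, the equivalence $(2)\Leftrightarrow(3)$ rests on the defining property of the momentum map together with the nondegeneracy of $\omega$, and the implication $(1)\Rightarrow(2)$ is the substantive step, relying on the conservation of the momentum map along the Hamiltonian flow (Noether's theorem).

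First I would dispose of $(2)\Leftrightarrow(3)$. For a fixed $\xi\in\ffg$, the defining property of $\Phi$ says that $\langle\Phi,\xi\rangle$ is a Hamiltonian function for $\xi_M$, so the Hamiltonian vector field of the augmented function $h^\xi=h-\langle\Phi,\xi\rangle$ is $\Xi_{h^\xi}=\Xi_h-\xi_M$. Since $\omega$ is nondegenerate, the point $m$ is a critical point of $h^\xi$ if and only if $\Xi_{h^\xi}(m)=0$, that is, if and only if $\Xi_h(m)=\xi_M(m)$, which is exactly the content of $(2)$. The implication $(2)\Rightarrow(1)$ is then trivial: if $\Xi_h(m)=\xi_M(m)$ for some $\xi\in\ffg_\mu\subseteq\ffg$, then $\Xi_h(m)\in T_m(G\cdot m)$, so $m$ is a relative equilibrium.

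The heart of the argument is $(1)\Rightarrow(2)$. By the remark following Definition~\ref{Def:Velocities}, a relative equilibrium $m$ admits some velocity $\xi\in\ffg$, so that $\Xi_h(m)=\xi_M(m)$; the task is to show that this $\xi$ may be chosen in the smaller Lie algebra $\ffg_\mu$. The key is that the $G$-invariance of $h$ makes $\Phi$ conserved along the flow of $\Xi_h$ (Noether's theorem), so that $\d\Phi_m(\Xi_h(m))=0$; concretely, pairing with any $\zeta\in\ffg$ and using that $\langle\Phi,\zeta\rangle$ is a Hamiltonian for $\zeta_M$ gives $\d\langle\Phi,\zeta\rangle_m(\Xi_h(m))=\pm\zeta_M(h)(m)=0$. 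On the other hand, differentiating the equivariance relation $\Phi(\exp(t\zeta)\cdot m)=\CoAd(\exp(t\zeta))\mu$ at $t=0$ yields the infinitesimal equivariance identity
\begin{gather*}
\d\Phi_m(\zeta_M(m))=\frac{\d}{\d t}\Big|_0\CoAd(\exp(t\zeta))\mu,
\end{gather*}
whose kernel, as a linear function of $\zeta$, is precisely the Lie algebra $\ffg_\mu$ of the coadjoint stabilizer $G_\mu$. Substituting $\Xi_h(m)=\xi_M(m)$ into the conservation law and applying this identity with $\zeta=\xi$ gives
\begin{gather*}
\frac{\d}{\d t}\Big|_0\CoAd(\exp(t\xi))\mu=\d\Phi_m(\xi_M(m))=\d\Phi_m(\Xi_h(m))=0,
\end{gather*}
so $\xi\in\ffg_\mu$, which establishes $(2)$.

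The main obstacle is exactly this upgrade in $(1)\Rightarrow(2)$: an arbitrary velocity is only guaranteed to lie in $\ffg$, and one must show it can be taken in $\ffg_\mu$. Conservation of the momentum map, combined with infinitesimal equivariance, is what pins the velocity into the kernel of the infinitesimal coadjoint action at $\mu$, namely $\ffg_\mu$. Everything else is a direct consequence of the momentum map axioms and the nondegeneracy of $\omega$.
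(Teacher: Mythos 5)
Your proof is correct. The paper itself offers no proof of this lemma---it is presented as a well-known characterization with a pointer to \cite[Theorem~4.1]{M92}---and your argument is precisely the standard one from that source: $(2)\Leftrightarrow(3)$ follows from the momentum map axiom $\Xi_{\langle\Phi,\xi\rangle}=\xi_M$ together with the nondegeneracy of $\omega$, and $(1)\Rightarrow(2)$ follows by combining Noether's theorem ($\d\Phi_m(\Xi_h(m))=0$, using the $G$-invariance of $h$) with infinitesimal equivariance of $\Phi$ to conclude that every velocity of $m$ automatically lies in $\ffg_\mu$.
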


\begin{Definition}\label{Def:AugHam} Let $(M,\omega,G,\Phi,h)$ be a Hamiltonian $G$-system. Given a vector $\xi$ in the Lie algebra $\ffg$, the {\it Hamiltonian function augmented by $\xi$}, or simply the {\it augmented Hamiltonian}, is the function
\begin{gather*}
h^\xi\colon \ M\to\bbr,\qquad h^\xi:=h-\langle\Phi,\xi\rangle.
\end{gather*}
The corresponding Hamiltonian vector field $\Xi_{h^\xi}=\Xi_h-\xi_M$ is called the {\it Hamiltonian vector field augmented by $\xi$}, or simply the {\it augmented Hamiltonian vector field}.
\end{Definition}

\begin{Remark}\label{Rem:Hessians} Given a smooth function $f\in C^\infty(M)$ on a manifold $M$, the Hessian of $f$ is only well-defined at critical points of~$f$. If $m\in M$ is a critical point of $f$, the Hessian $\d^2f(m)\colon T_mM\times T_mM\to \bbr$ of $f$ at the point $m$ behaves well under change of coordinates and pull-backs. That is, if $j\colon N\to M$ is a smooth map with $j(n)=m$, then
\begin{gather*}
\d^2(j^* f)(n)=j^*\big(\d^2f(m)\big).
\end{gather*}
In particular, if $N$ is a submanifold of $M$, then
\begin{gather*}
\big(\d^2f\big)(m)|_{T_mN}=\d^2 (f|_N t)(m).
\end{gather*}
Lemma \ref{Lemma:HamRelEqChar} guarantees that if $m$ is a $G$-relative equilibrium of the Hamiltonian vector field $\Xi_h$, then the augmented hamiltonian $h^\xi$ has a well-defined Hessian at~$m$.
\end{Remark}

Lerman and Singer \cite{LS98} and Ortega and Ratiu \cite{OR99}, building on work of Patrick \cite{P91, P95}, proved a criterion for~$G_\mu$-stability of a Hamiltonian $G$-relative equilibrium involving the Hessian of the augmented Hamiltonian~$h^\xi$. Their work required an orthogonality condition on the velocity~$\xi$. Montaldi and Rodr\'iguez-Olmos were able to eliminate this condition in a generalized criterion; first, for the case of compact moment isotropy in \cite[Theorem~2]{MRO11}; and then, more generally, for the case of possibly noncompact moment isotropy in \cite[Theorem~3.6]{MRO15}. They also provide an example where this criterion predicts stability, while previous criteria were inconclusive~\cite[Remark~3.7]{MRO15}. Their result is as follows:

\begin{Theorem}[{Montaldi and Rodr\'iguez-Olmos \cite[Theorem~3.6]{MRO15}}]\label{Thm:MROTheorem}
Let $(M,\omega,G,\Phi,h)$ be a~Ha\-miltonian $G$-system. Suppose $m\in M$ is a $G$-relative equilibrium of the Hamiltonian vector field~$\Xi_h$ of the function $h$, and let $\mu$ be the moment of the point~$m$. Suppose further that
\begin{enumerate}\itemsep=0pt
\item[$1)$] the moment isotropy subgroup $G_\mu$ acts properly on the manifold $M$;
\item[$2)$] there exists an $\Ad(G_\mu)$-invariant inner product on the Lie algebra $\ffg$ of $G$;
\item[$3)$] there exists a velocity vector $\xi\in\ffg_\mu$ of the point $m$ such that the Hessian $\d^2h^\xi(m)$ is definite and nondegenerate on a~$G_m$-invariant complement $W$ to the tangent space $T_m(G_\mu\cdot m)$ in $\ker\d\Phi_m$.
\end{enumerate}
Then the $G$-relative equilibrium $m$ is $G_\mu$-stable.
\end{Theorem}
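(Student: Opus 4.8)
The plan is to deduce the theorem from the slice stability criterion (Theorem~\ref{Thm:MainTheorem1}), applied to the action of the moment isotropy group $G_\mu$, using the augmented Hamiltonian vector field as a convenient isomorphic representative on the slice. First I would regard $M$ as a proper $G_\mu$-manifold, which is licensed by hypothesis~(1). Since $\Phi$ is equivariant, $g\in G_m$ forces $\CoAd(g)\mu=\mu$, so $G_m\subseteq G_\mu$ and hence $(G_\mu)_m=G_m$; as $G_\mu$ acts properly this stabilizer is compact. Because $\xi\in\ffg_\mu$ is a velocity for $m$, we have $\Xi_h(m)=\xi_M(m)\in T_m(G_\mu\cdot m)$, so $m$ is a $G_\mu$-relative equilibrium of $\Xi_h$. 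Restricting to a tube $G_\mu\cdot V$ about the orbit (stability being a local, $G_\mu$-invariant notion), I would fix a slice $V$ through $m$ for the $G_\mu$-action (Remark~\ref{Rem:SliceExistence}) and use the $\Ad(G_\mu)$-invariant inner product of hypothesis~(2) to obtain the $G_m$-equivariant splitting $\ffg_\mu=\ffg_m\oplus\ffq$, and with it a projection functor $P$ as in Definition~\ref{Def:ProjectionFunctor}.

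Next I would construct the representative $Y$ on the slice from the augmented vector field $\Xi_{h^\xi}=\Xi_h-\xi_M$. By Lemma~\ref{Lemma:HamRelEqChar} the point $m$ is a critical point of $h^\xi$, so $\Xi_{h^\xi}(m)=0$ and $m$ is an honest equilibrium. On the tube $\Xi_h$ is $G_\mu$-equivariant while $\Xi_{h^\xi}$ is $G_\xi$-equivariant, so both are equivariant for $G_\mu\cap G_\xi$; since $\xi\in\ffg_\mu\cap\ffg_\xi$, the constant infinitesimal gauge transformation $\xi$ realizes an isomorphism between $\Xi_{h^\xi}$ and $\Xi_h$ in the groupoid of $(G_\mu\cap G_\xi)$-equivariant vector fields (the tube version of Lemma~\ref{Lemma:AugIso}). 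Applying the projection functor with respect to the subgroup $G_\mu\cap G_\xi$ (Definition~\ref{Def:ProjectionFunctorSubgroup}) then produces an $H$-equivariant field $Y:=j^*(\nu(\Xi_{h^\xi}))$ on $V$, with $H:=G_m\cap G_\xi\le G_m$, that is $H$-isomorphic to $P(\Xi_h)$ — here one uses that $P$ and $P_H$ agree on objects, both being $j^*\circ\nu$. Moreover $Y(j^{-1}(m))=0$, since $\nu(\Xi_{h^\xi})(m)=\widetilde\Phi(\Xi_{h^\xi}(m))=\widetilde\Phi(0)=0$. Thus condition~(1) of Theorem~\ref{Thm:MainTheorem1} holds, and the whole theorem reduces to showing that $j^{-1}(m)$ is an $H$-stable equilibrium of $Y$.

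To establish this remaining $H$-stability I would pass to the Marle--Guillemin--Sternberg normal form \cite{GuS84,Ma85}. The normal form identifies the slice $V$, as a $G_m$-space, with a neighbourhood of the origin in a product of the symplectic slice $W$ with the transverse momentum directions, and identifies $W$ with a $G_m$-invariant complement of $T_m(G_\mu\cdot m)$ in $\ker\d\Phi_m$ — precisely the space of hypothesis~(3). The augmented Hamiltonian $h^\xi$, being $G_\xi$-invariant and conserved by the flow of $\Xi_{h^\xi}$, furnishes the Lyapunov data: its Hessian at $m$ restricts on $W$ to $\d^2h^\xi(m)|_W$, which is definite and nondegenerate by hypothesis~(3). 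Conservation of the momentum map $\Phi$ confines nearby trajectories near a level set in the momentum directions, while the definite Hessian on $W$ controls the symplectic-slice directions; together these yield $H$-stability of $j^{-1}(m)$ for $Y$. Feeding this back through Theorem~\ref{Thm:MainTheorem1} gives the desired $G_\mu$-stability of $m$.

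The main obstacle is exactly this last step: converting the definiteness of $\d^2h^\xi(m)$ on the abstract complement $W$ into a genuine Lyapunov argument on the \emph{non-symplectic} slice $V$. The difficulty is twofold. One must use the normal form to show that the momentum and group directions of the slice do not destroy stability, and it is here that the possible noncompactness of $G_\mu$ forces us to settle for stability modulo $G_\mu$ rather than genuine stability; and one must verify that the representative produced by the augmented Hamiltonian is the one whose Hessian on $W$ is the correct Lyapunov data. This is precisely the normal-form computation the paper isolates in Section~\ref{MGSNormalForm}; everything preceding it is the formal reduction supplied by the groupoid machinery of Section~\ref{IsosAndRelEqs} together with the slice stability criterion.
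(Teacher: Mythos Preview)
Your proposal follows the paper's strategy: reduce to the slice stability criterion applied to an augmented Hamiltonian vector field, then carry out a Lyapunov--Morse argument in the MGS normal form. Two differences deserve comment.

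First, the paper does not work with $G_\mu$ throughout. It begins (Remark~\ref{Rem:FixedMomentum}) by reducing to the case $\mu=0$, hence $G_\mu=G$, via symplectic cross-sections as in \cite[Section~2.3]{LS98}. This is a simplification rather than an essential step, but it makes the MGS normal form and the explicit momentum-map formula in Theorem~\ref{Thm:MGSNormalForm} considerably cleaner; your route would require the more elaborate normal form for nonzero moment.

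Second, and this is the substantive point your sketch misses: the paper does \emph{not} augment by the given velocity $\xi$, but by its projection $\eta:=\bbp(\xi)\in\ffg_m$ onto the stabilizer Lie algebra, taking $Y:=P_{G_\eta}(\Xi_{h^\eta})$ and $H:=(G_m)_\eta$. The reason is that the constants-of-motion transfer (Corollary~\ref{Cor:ConstantsVerticalAugHam}) needs the augmenting vector to lie in $\ffk=\ffg_m$, so that $\eta_M$ is vertical and $\nu(\eta_M)=\eta_M$; only then does the \emph{same} gauge transformation $\psi$ relating $\Xi_h$ to $\nu(\Xi_h)$ also relate $\Xi_{h^\eta}$ to $\nu(\Xi_{h^\eta})$, and hence carry conservation of $h$ and $\|\Phi\|_{\ffg^*}^2$ over to the projected field on the slice. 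Your $Y=j^*\nu(\Xi_{h^\xi})$ need not conserve $j^*h$ or $\|j^*\Phi\|^2$ when $\xi\notin\ffg_m$, so the Lyapunov data you propose is not obviously available along integral curves of $Y$. The paper closes the loop by checking, in the normal form, that $h^\xi|_{j(W_r)}=h^\eta|_{j(W_r)}$, so the Hessian hypothesis on $h^\xi$ transfers to $h^\eta$. This $\xi\mapsto\eta$ replacement is precisely the missing idea in your ``main obstacle'' paragraph.
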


\begin{Remark}\label{Rem:Strategy} The stability criteria in \cite{LS98,MRO15, MRO11, OR99} can be seen from the point of view of the isomorphic relationship between the Hamiltonian vector field and its augmented counterparts (the precise isomorphic relationship follows from Lemma~\ref{Lemma:AugIso}). The main goal of this section is to provide an alternative proof of Theorem~\ref{Thm:MROTheorem} from this point of view.
Recall that the slice stability criterion (Theorem~\ref{Thm:MainTheorem1}) uses isomorphisms of equivariant vector fields to obtain a criterion for stability of relative equilibria. In Lemma~\ref{Lemma:Constants}, and Corollaries~\ref{Cor:ConstantsAugHam} and~\ref{Cor:ConstantsVerticalAugHam}, which we will prove next, we describe a relationship between the invariant constants of motion of isomorphic vector fields. We use this relationship in order to apply Theorem~\ref{Thm:MainTheorem1} in our proof of Theorem~\ref{Thm:MROTheorem}.
\end{Remark}

\begin{Remark}\label{Rem:FixedMomentum} To prove Theorem \ref{Thm:MROTheorem}, one can reduce to the case when the momentum of the relative equilibrium is fixed by the coadjoint representation. Equivalently one can reduce to the case when the momentum is zero. This can be achieved by using symplectic cross-sections exactly like in \cite[Section~2.3]{LS98}. This assumption simplifies several arguments in the proof of Theorem~\ref{Thm:MROTheorem}, so we will suppose it holds throughout the rest of the paper. One may also assume without loss of generality that the hamiltonian function is such that $h(m)=0$.
\end{Remark}

\begin{Remark}\label{Rem:NormsOnLie} We will need several norms on finite-dimensional Lie algebras and their duals. Let $G$ be a Lie group, let $\ffg$ be its Lie algebra, and let $\ffg^*$ be the dual of $\ffg$. Recall that a $G$-invariant inner product $(\cdot,\cdot)_\ffg$ on the Lie algebra $\ffg$ induces a $G$-invariant norm $||\cdot||_{\ffg}$ on $\ffg$ given by
\begin{gather*}
||\nu||_{\ffg}:=\sqrt{(\nu,\nu)_\ffg}
\end{gather*}
for all $\nu\in\ffg$.
Let $(\cdot,\cdot)_{\ffg^*}$ be the inner product on $\ffg^*$ dual to the inner product on $\ffg$. Then we also get a $G$-invariant norm $||\cdot||_{\ffg^*}$ on $\ffg^*$ given by
\begin{gather*}
||\rho||_{\ffg^*}:=\sqrt{(\rho,\rho)_{\ffg^*}}
\end{gather*}
for all $\rho\in\ffg^*$. The inner product on $\ffg$ also induces a $G$-invariant sup norm $||\cdot||_{\infty}$ on the dual~$\ffg^*$ given by
\begin{gather*}
||\rho||_{\infty}:=\sup_{||\nu||_{\ffg}=1} |\langle\rho,\nu\rangle|
\end{gather*}
for all $\rho\in\ffg^*$, where $\langle\cdot,\cdot\rangle\colon \ffg^*\times\ffg\to\bbr$ is the pairing between $\ffg$ and $\ffg^*$. Recall that the sup norm satisfies $|\langle\rho,\nu\rangle|\le ||\rho||_{\infty}||\nu||_{\ffg}$ for all $\rho\in\ffg^*$ and all $\nu\in\ffg$.
\end{Remark}

We recall the following definition:

\begin{Definition}[constants of motion]\label{Def:Constants}
Let $M$ be a manifold and let $X$ be a smooth vector field on~$M$. A smooth function $f\in C^\infty(M)$ on the manifold $M$ is a {\it constant of motion of~$X$} if $\d f (X)=0$.
\end{Definition}

\begin{Remark}\label{Rem:Constants}
Let $X$ be a smooth vector field on a manifold $M$, and let $f$ be a smooth function on~$M$. A straightforward check shows that $f$ is a constant of motion of $X$ if and only if $f$ is conserved along each integral curve of~$X$. Here we say $f$ is conserved, or is constant, along an integral curve $\alpha$ of~$X$, if for every time $t$ such that $\alpha$ is defined we have that $f(\alpha(t))=f(\alpha(0))$.
\end{Remark}

\begin{Remark}\label{Rem:HamConstants} Recall that if $(M,\omega,G,\Phi,h)$ is a Hamiltonian $G$-system, then the Hamiltonian~$h$ is a constant of motion of the Hamiltonian vector field~$\Xi_h$. Furthermore, it is Noether's theo\-rem that the momentum~$\Phi$ is constant along the integral curves of $\Xi_h$ (see, for example, \cite[Theorem~2.2]{M92}). That is, for all vectors $\xi\in\ffg$, the function $\langle\Phi,\xi\rangle$ is a constant of motion of the Hamiltonian vector field~$\Xi_h$. Suppose we are given an inner product on the Lie algebra $\ffg$ of $G$, and let $||\cdot||_{\ffg^*}$ be the dual norm on the dual~$\ffg^*$. Then the function $||\Phi||^2_{\ffg^*}$ is also a constant of motion of the Hamiltonian vector field $\Xi_h$.
\end{Remark}

We will make use of the following relationship between the invariant constants of motion of isomorphic vector fields:

\begin{Lemma}\label{Lemma:Constants} Let $M$ be a proper $G$-manifold, let $X$ and $Y$ be smooth vector fields on $M$, and let $f$ be a $G$-invariant constant of motion of~$X$. Let $H$ be a Lie subgroup of $G$ with Lie algebra~$\ffh$, and suppose that there exists an infinitesimal gauge transformation $\psi\in C^\infty(M,\ffh)^H$ such that $Y=X+\psi_M$. Then $f$ is a constant of motion of the vector field~$Y$. In particular, if $X$ and $Y$ are $G$-isomorphic vector fields, then they have the same $G$-invariant constants of motion.
\end{Lemma}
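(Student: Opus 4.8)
The plan is to reduce the claim to the single observation that $\d f(\psi_M)$ vanishes identically. Since the differential $\d f$ is fiberwise linear and $Y = X + \psi_M$, we have $\d f(Y) = \d f(X) + \d f(\psi_M)$ as functions on $M$. The hypothesis that $f$ is a constant of motion of $X$ gives $\d f(X) = 0$, so it suffices to prove $\d f(\psi_M) = 0$.

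To see this, I would evaluate $\d f(\psi_M)$ pointwise. Fix a point $m \in M$. By the definition of the induced vector field we have $\psi_M(m) = \frac{\d}{\d t}\big|_0 \exp(t\psi(m)) \cdot m$, so the chain rule gives
\[
\d f(\psi_M)(m) = \d f_m\big(\psi_M(m)\big) = \frac{\d}{\d t}\Big|_{0} f\big(\exp(t\psi(m)) \cdot m\big).
\]
Now $\psi(m) \in \ffh \subseteq \ffg$, so $\exp(t\psi(m))$ lies in the subgroup $H \le G$ for every $t$. Because $f$ is $G$-invariant it is in particular $H$-invariant, hence the curve $t \mapsto f(\exp(t\psi(m)) \cdot m)$ is constantly equal to $f(m)$ and its derivative at $t = 0$ vanishes. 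This holds at every $m$, so $\d f(\psi_M) \equiv 0$, whence $\d f(Y) = \d f(X) = 0$ and $f$ is a constant of motion of $Y$.

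For the final assertion I would exploit the symmetry of the isomorphism relation. Suppose $X$ and $Y$ are $G$-isomorphic, so $Y = X + \psi_M$ for some $\psi \in C^\infty(M, \ffg)^G$; applying the first part with $H = G$ shows that every $G$-invariant constant of motion of $X$ is again a constant of motion of $Y$, and it remains $G$-invariant. Rewriting the relation as $X = Y + (-\psi)_M$, with $-\psi \in C^\infty(M, \ffg)^G$, and applying the first part once more with the roles of $X$ and $Y$ reversed yields the opposite inclusion. Hence $X$ and $Y$ share the same $G$-invariant constants of motion.

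There is no serious obstacle here; the only point worth flagging is that the argument uses exactly the $H$-invariance of $f$, which is automatic from the assumed $G$-invariance because $H \le G$. The entire content of the lemma is that the gauge correction $\psi_M$ pushes points along directions tangent to $H$-orbits, and any $G$-invariant function is constant along such directions.
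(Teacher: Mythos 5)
Your proposal is correct and follows essentially the same route as the paper: reduce to showing $\d f(\psi_M)=0$ by linearity, then compute pointwise that $(\d f)_m(\psi_M(m))=\frac{\d}{\d t}\big|_0 f(\exp(t\psi(m))\cdot m)=0$ using the $G$-invariance of $f$. Your explicit treatment of the final assertion via $X=Y+(-\psi)_M$ is a small addition the paper leaves implicit, but the argument is the same.
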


\begin{proof}We verify that $\d f (Y) = 0$. Since $f$ is a constant of motion of $X$, observe that
\begin{gather*}
\d f (Y) = \d f (X+\psi_M)= \d f (X) + \d f (\psi_M)= \d f (\psi_M).
\end{gather*}
Hence, it suffices to show that $\d f (\psi_M)=0$. Let $m$ be a point in $M$, then
\begin{align*}
(\d f)_m (\psi_M(m))&=(\d f)_m \left(\frac{\d}{\d t}\Big|_0\exp(t\psi(m))\cdot m\right) \\
&=\frac{\d}{\d t}\Big|_0 f\big(\exp(t\psi(m))\cdot m\big) \\
&=\frac{\d }{\d t}\Big|_0 f(m) \qquad \text{since }f\text{ is }G\text{-invariant} \\
&=0.
\end{align*}
Since the point $m$ is arbitrary, we have that $\d f (\psi_M)=0$. Hence, $f$ is a constant of motion of~$Y$.
\end{proof}

Lemma \ref{Lemma:Constants} has the following two corollaries:

\begin{Corollary}\label{Cor:ConstantsAugHam}Let $(M,\omega,G,\Phi,h)$ be a Hamiltonian $G$-system, let $\eta$ be a vector in the Lie algebra $\ffg$ of $G$, and let $\Xi_{h^\eta}$ be the Hamiltonian vector field augmented by~$\eta$. Suppose $||\cdot||_{\ffg^*}$ is the $G$-invariant norm on the dual $\ffg^*$ that is dual to a given $G$-invariant inner product on the Lie algebra $\ffg$ of~$G$. Then the Hamiltonian function $h$ and the squared-norm of the momentum~$||\Phi||^2_{\ffg^*}$ are constants of motion of the augmented Hamiltonian vector field $\Xi_{h^\eta}$.
\end{Corollary}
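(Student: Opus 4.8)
The plan is to exhibit $\Xi_{h^\eta}$ as the image of $\Xi_h$ under a gauge transformation by a constant (hence $G_\eta$-equivariant) infinitesimal gauge transformation, and then to invoke Lemma~\ref{Lemma:Constants} for each of the two functions. Since $M$ is a proper $G$-manifold (Definition~\ref{Def:HamGSpace}), the hypotheses of Lemma~\ref{Lemma:Constants} are available.

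First I would set $X:=\Xi_h$ and $Y:=\Xi_{h^\eta}$. By Definition~\ref{Def:AugHam} we have $\Xi_{h^\eta}=\Xi_h-\eta_M$, so taking $H:=G_\eta$ and letting $\psi\colon M\to\ffg_\eta$ be the constant map $m\mapsto-\eta$, one computes $\psi_M=-\eta_M$ and hence $Y=X+\psi_M$. It remains to check that $\psi$ is a legitimate infinitesimal gauge transformation for $H$, i.e.\ that $\psi\in C^\infty(M,\ffg_\eta)^{G_\eta}$: the value $-\eta$ lies in $\ffg_\eta$ since $\eta$ centralizes itself, and $\psi$ is $\Ad(G_\eta)$-equivariant because $\Ad(g)(-\eta)=-\eta$ for every $g\in G_\eta$. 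This is exactly the augmented-field isomorphism of Lemma~\ref{Lemma:AugIso}, now read in the Hamiltonian setting.

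Next I would verify that $h$ and $||\Phi||^2_{\ffg^*}$ are $G$-invariant constants of motion of $X=\Xi_h$. That both are constants of motion of $\Xi_h$ is recorded in Remark~\ref{Rem:HamConstants} (the Hamiltonian is conserved, and Noether's theorem gives conservation of $\Phi$ and hence of $||\Phi||^2_{\ffg^*}$). For invariance: $h$ is $G$-invariant by the definition of a Hamiltonian $G$-system (Definition~\ref{Def:HamGSystem}), while $||\Phi||^2_{\ffg^*}$ is $G$-invariant because $\Phi$ is $\CoAd$-equivariant and the norm $||\cdot||_{\ffg^*}$ is $G$-invariant, so that $||\Phi(g\cdot m)||^2_{\ffg^*}=||\CoAd(g)\Phi(m)||^2_{\ffg^*}=||\Phi(m)||^2_{\ffg^*}$.

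With these pieces in place, I would apply Lemma~\ref{Lemma:Constants} twice, once with $f=h$ and once with $f=||\Phi||^2_{\ffg^*}$, to conclude in each case that the function is a constant of motion of $Y=\Xi_{h^\eta}$. There is no serious obstacle here; the only points requiring care are confirming that the constant map $-\eta$ genuinely defines an $\ffg_\eta$-valued $G_\eta$-equivariant gauge transformation, so that the subgroup hypothesis of Lemma~\ref{Lemma:Constants} is met, and the short equivariance computation establishing the $G$-invariance of $||\Phi||^2_{\ffg^*}$.
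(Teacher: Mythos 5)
Your proposal is correct and follows essentially the same route as the paper: both exhibit $\Xi_{h^\eta}=\Xi_h+\psi_M$ with $\psi$ the constant map $-\eta$ valued in $\ffg_\eta$, note that $h$ and $||\Phi||^2_{\ffg^*}$ are $G$-invariant constants of motion of $\Xi_h$, and then apply Lemma~\ref{Lemma:Constants} with $H=G_\eta$. The extra verifications you include (that $-\eta\in\ffg_\eta$ is $\Ad(G_\eta)$-fixed, and the $\CoAd$-equivariance computation for the invariance of $||\Phi||^2_{\ffg^*}$) are details the paper leaves implicit, and they are accurate.
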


\begin{proof} We want to apply Lemma~\ref{Lemma:Constants}. Observe that the functions $h$ and $||\Phi||^2_{\ffg^*}$ are $G$-invariant constants of motion of the Hamiltonian vector field $\Xi_h$ (Remark~\ref{Rem:HamConstants}). Let $H$ be the Lie subgroup $G_\eta:=\{g\in G\,|\, \Ad(g)\eta=\eta\}$, and let $\ffh$ be its Lie algebra. Let $\psi\colon M\to\ffh$ be the constant map $\psi=-\eta$. Then $\psi$ is an infinitesimal gauge transformation in $C^\infty(M,\ffh)^H$ such that $\Xi_{h^\eta}=\Xi_h+\psi_M$. Hence, we obtain the result by Lemma~\ref{Lemma:Constants} with $\Xi_h$ in place of~$X$ and~$\Xi_{h^\eta}$ in place of~$Y$.
\end{proof}

\begin{Corollary}\label{Cor:ConstantsVerticalAugHam}
Let $G$ be a Lie group with Lie algebra $\ffg$, let $K$ be a Lie subgroup of $G$ with Lie algebra $\ffk$, let $V$ be a $K$-manifold, and let $M$ be the associated bundle $G\times^K V$. Suppose that $(M,\omega,G,\Phi,h)$ is a Hamiltonian $G$-system.
Furthermore:
\begin{enumerate}\itemsep=0pt
\item[$1.$] Let $\ffg=\ffk\oplus\ffq$ be a $K$-equivariant splitting, and let $\nu\colon \Gamma(TM)\to\Gamma(TV)$ be the vertical projection of vector fields with respect to this splitting $($Definition {\rm \ref{Def:SplitConnProj})}.
\item[$2.$] Let $||\cdot||_{\ffg^*}$ be the $G$-invariant norm on the dual $\ffg^*$ that is dual to a given $G$-invariant inner product on the Lie algebra $\ffg$ of $G$.
\item[$3.$] Let $\eta$ be a vector in the Lie algebra $\ffk$, and let $\Xi_{h^\eta}$ be the Hamiltonian vector field augmented by $\eta$.
\end{enumerate}
Then the Hamiltonian function $h$ and the squared-norm of the momentum $||\Phi||^2_{\ffg^*}$ are constants of motion of the vertical projection~$\nu(\Xi_{h^\eta})$.
\end{Corollary}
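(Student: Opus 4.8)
The plan is to reduce the claim, via Definition~\ref{Def:Constants}, to verifying that $\d f\big(\nu(\Xi_{h^\eta})\big)=0$ for each of the two functions $f=h$ and $f=||\Phi||^2_{\ffg^*}$. Two properties of these functions drive the argument. First, by Corollary~\ref{Cor:ConstantsAugHam} both $h$ and $||\Phi||^2_{\ffg^*}$ are constants of motion of the augmented Hamiltonian vector field $\Xi_{h^\eta}$, so that $\d f(\Xi_{h^\eta})=0$. Second, both functions are $G$-invariant: the Hamiltonian $h$ is $G$-invariant by the definition of a Hamiltonian $G$-system, while $||\Phi||^2_{\ffg^*}$ is $G$-invariant because $\Phi$ is equivariant and the dual norm is preserved by the coadjoint representation (Remark~\ref{Rem:NormsOnLie}). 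From this $G$-invariance it follows that $(\d f)_p\big(\xi_M(p)\big)=0$ for every $\xi\in\ffg$ and every $p\in M$, since $t\mapsto f(\exp(t\xi)\cdot p)$ is the constant function $f(p)$ and its derivative at $0$ is $(\d f)_p(\xi_M(p))$.

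Next I would split $\Xi_{h^\eta}$ into its vertical and horizontal parts using the connection $\widetilde\Phi$ of Definition~\ref{Def:SplitConnProj}. By definition $\nu(\Xi_{h^\eta})=\widetilde\Phi\circ\Xi_{h^\eta}$ is the vertical part, so setting $H:=(\id-\widetilde\Phi)\circ\Xi_{h^\eta}$ we obtain $\Xi_{h^\eta}=\nu(\Xi_{h^\eta})+H$, where $H$ is a section of the horizontal bundle $\ker\widetilde\Phi$. Applying $\d f$ then gives
\begin{gather*}
\d f\big(\nu(\Xi_{h^\eta})\big)=\d f(\Xi_{h^\eta})-\d f(H)=-\d f(H),
\end{gather*}
so the whole statement reduces to proving $\d f(H)=0$.

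The key step, and the one I expect to be the main obstacle, is to show that the horizontal bundle $\ker\widetilde\Phi$ sits inside the tangent spaces to the $G$-orbits. I would establish this by unwinding the construction of $\widetilde\Phi$ from Lemma~\ref{Lemma:SplitConn}, along the lines of the computation in the proof of Lemma~\ref{Lemma:VertRelEq}. Writing $M=G\times^KV$ and using the quotient map $\varpi\colon TG\times TV\to TG\times^KTV$ together with diagram~(\ref{Diag:Connection}), for $\Xi_{h^\eta}(p)=\varpi(X_g,Y_v)$ at a point $p=[g,v]$ one computes, by linearity of $\varpi$ and the defining relation $\widetilde\Phi\big(\varpi(X_g,Y_v)\big)=\varpi\big(\Phi(X_g),Y_v\big)$,
\begin{gather*}
H(p)=\varpi(X_g,Y_v)-\varpi\big(\Phi(X_g),Y_v\big)=\varpi\big((\id-\Phi)X_g,0\big).
\end{gather*}
Since $T_p(G\cdot p)=\varpi\big(T_gG\times\{0\}\big)$, this shows $H(p)\in T_p(G\cdot p)$. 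As $T_p(G\cdot p)=\{\xi_M(p)\mid\xi\in\ffg\}$, we may write $H(p)=\xi_M(p)$ for a suitable $\xi\in\ffg$ depending on $p$, and the $G$-invariance of $f$ then yields $(\d f)_p\big(H(p)\big)=0$ at every point. Hence $\d f(H)=0$, and combining this with the displayed identity gives $\d f\big(\nu(\Xi_{h^\eta})\big)=0$, proving that both $h$ and $||\Phi||^2_{\ffg^*}$ are constants of motion of $\nu(\Xi_{h^\eta})$.
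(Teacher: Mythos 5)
Your proof is correct, and it reaches the conclusion by a somewhat different route than the paper. Both arguments share the same skeleton: by Corollary~\ref{Cor:ConstantsAugHam} the functions $h$ and $||\Phi||^2_{\ffg^*}$ are constants of motion of $\Xi_{h^\eta}$, and since both are $G$-invariant their differentials annihilate every orbit direction $\xi_M(p)$, so everything reduces to showing that the difference $\Xi_{h^\eta}-\nu(\Xi_{h^\eta})$ is pointwise tangent to the $G$-orbits. Where you differ is in how that tangency is established. The paper invokes Theorem~\ref{Thm:LermanEquivalence} to write $\Xi_h=E(P(\Xi_h))+\psi_M=\nu(\Xi_h)+\psi_M$ for a single smooth infinitesimal gauge transformation $\psi\in C^\infty(M,\ffg)^G$, observes that $\nu(\eta_M)=\eta_M$ so the same $\psi$ relates $\Xi_{h^\eta}$ and $\nu(\Xi_{h^\eta})$, and then applies Lemma~\ref{Lemma:Constants}. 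You instead unwind the connection of Lemma~\ref{Lemma:SplitConn} directly: writing $\Xi_{h^\eta}(p)=\varpi(X_g,Y_v)$ at $p=[g,v]$ and using $\widetilde\Phi\circ\varpi=\varpi\circ(\Phi\times\id)$ together with fiberwise linearity of $\varpi$, you get $\big(\id-\widetilde\Phi\big)\big(\varpi(X_g,Y_v)\big)=\varpi\big((\id-\Phi)X_g,0\big)\in\varpi\big(T_gG\times\{0\}\big)=T_p(G\cdot p)$, which is the horizontal analogue of the computation in Lemma~\ref{Lemma:VertRelEq}. Your version is more self-contained --- it avoids the equivalence of categories entirely and only needs the pointwise statement $\im\big(\id-\widetilde\Phi\big)\subseteq T(G\cdot p)$ --- and it has the minor advantage of not needing $\Xi_{h^\eta}$ to carry any equivariance. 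The paper's version is shorter given the machinery already in place and yields the stronger global fact that the horizontal part is $\psi_M$ for a smooth $\Ad$-equivariant map $\psi\colon M\to\ffg$, which is the structural point the paper wants to emphasize (the relationship between a vector field and its vertical projection is governed by an infinitesimal gauge transformation). Either argument is a complete proof of the corollary.
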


\begin{proof} By Corollary \ref{Cor:ConstantsAugHam}, the functions $h$ and $||\Phi||^2_{\ffg^*}$ are $G$-invariant constants of motion of the augmented Hamiltonian vector field $\Xi_{h^{\eta}}$ (Remark~\ref{Rem:HamConstants}). We want to apply Lemma \ref{Lemma:Constants}. Observe that the group $K$, acting on $M$ as a subgroup of the Lie group~$G$, is the stabilizer of the point $m:=[1,0]\in M$. Furthermore, the $K$-manifold $V$, together with the $K$-equivariant embedding $j\colon V\hookrightarrow M$ defined by $j(w):=[1,w]$, is a global slice of the action through the point~$m$ (Remark \ref{Rem:ArbitraryModel}). Let $E\colon \bbx(K\times V \toto V)\to\bbx(G\times M \toto M)$ be the canonical equivariant extension functor (Definition \ref{Def:ExtensionFunctor}) and let $P\colon \bbx(G\times M \toto M)\to\bbx(K\times V \toto V)$ be the projection functor corresponding to the splitting $\ffg=\ffk\oplus\ffq$ (Definition \ref{Def:ProjectionFunctor}). By Theorem~\ref{Thm:LermanEquivalence}, there exists a map $\psi\in C^\infty(M,\ffg)^G$ such that
\begin{gather}\label{Eq:GaugeTransf}
\Xi_h=E(P(\Xi_h))+\psi_M.
\end{gather}
By definition of the functors $E$ and $P$, the vector field $\Xi_h$ is such that $E(P(\Xi_h))=\nu(\Xi_h)$. Since the vector $\eta$ is in $\ffk$, the vector field $\eta_M$ is vertical in the bundle $M\to G/K$, and thus $\nu(\eta_M)=\eta_M$. Consequently, we have that
\begin{align*}%\label{Eq:VertTransf}
\begin{split}
\Xi_{h^\eta}
&=\Xi_h-\eta_M \\
&=E(P(\Xi_h))+\psi_M-\eta_M \\
&=\nu(\Xi_h)+\psi_M-\nu(\eta_M)\qquad\text{by (\ref{Eq:GaugeTransf})}\\
&=\nu(\Xi_{h^\eta})+\psi_M.
\end{split}
\end{align*}
That is, the same map $\psi$ relates the augmented Hamiltonian vector field $\Xi_{h^\eta}$ and its vertical part. As a side note, observe that this doesn't mean the vector field $\Xi_{h^\eta}$ is isomorphic to its vertical part since neither vector field is necessarily $G$-equivariant; and while both vector fields are equivariant with respect to the subgroup $G_\eta:=\{g\in G \,|\, \Ad(g)\eta=\eta\}$, the map $\psi$ doesn't necessarily land in the Lie algebra of~$G_\eta$. However, applying Lemma~\ref{Lemma:Constants} with the vector field $\Xi_{h^\eta}$ in place of $X$, the vector field $\nu\left(\Xi_{h^\eta}\right)$ in place of $Y$, and the infinitesimal gauge transformation $-\psi\in C^\infty(M,\ffg)^G$, one obtains that the functions $h$ and $||\Phi||^2_{\ffg^*}$ are constants of motion of the vector field $\nu (\Xi_{h^\eta} )$.
\end{proof}

We will need the following application of the Morse lemma for families, which generalizes a~similar application in \cite[Proposition~3.4]{LS98}:

\begin{Proposition}\label{Prop:MorseApp}
Let $U$ and $W$ be normed finite-dimensional vector spaces.
Furthermore:
\begin{enumerate}\itemsep=0pt
\item[$1.$] Let $f\in C^\infty(U\times W)$ be a smooth function such that $f(0,0)=0$, $\d_Wf(0,0)=0$, and $\d^2_Wf(0,0)$ is nondegenerate and positive definite, where $\d_W$ and $\d^2_W$ denote the differential and Hessian in the $W$ variables respectively.
\item[$2.$] Let $\varphi\in C^\infty(U\times W)$ be a smooth function such that $\varphi(0,0)=0$ and $\varphi(\rho,w)\ge |\rho|^2$ for all $(\rho,w)\in U\times W$.
\item[$3.$] Let $\theta\in C^0(U\times W)$ be a nonnegative continuous function with $\theta(0,0)=0$.
\end{enumerate}
Let the norm on the product space $U\times W$ be the sum of the squares of the norms on $U$ and $W$.
Then for all $\epsilon >0$, there exists $\delta>0$ such that if $\gamma(t)=(\rho(t),w(t))$ defines a curve in $U\times W$ with
\begin{enumerate}\itemsep=0pt
\item[$1)$] $|\gamma(0)|<\delta$;
\item[$2)$] $\varphi(\gamma(t))\le\varphi(\gamma(0))$ for all $t$;
\item[$3)$] $|f(\gamma(t))|\le \theta(\gamma(0))$ for all $t$.
\end{enumerate}
Then $|\gamma(t)|<\epsilon$ for all $t$.
\end{Proposition}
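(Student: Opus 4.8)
The plan is to use the parametrized Morse lemma to put $f$ into a normal form in the $W$-variables, and then trap the curve $\gamma(t)$ inside a small box by combining the three hypotheses: positive-definiteness of the Hessian controls the $W$-component, the coercivity $\varphi(\rho,w)\ge|\rho|^2$ together with monotonicity of $\varphi$ along the curve controls the $U$-component, and the bound on $f$ provides the link. The overall strategy mirrors the Lyapunov-function argument in \cite[Proposition~3.4]{LS98}, but with the extra $U$-direction handled separately by the coercive function $\varphi$.

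\emph{First} I would apply the Morse lemma for families (the Morse--Palais lemma with parameters) to the function $f$. Since $\d_Wf(0,0)=0$ and $\d^2_Wf(0,0)$ is nondegenerate and positive definite, there is a parametrized change of coordinates $w\mapsto \tilde w(\rho,w)$, defined near the origin and depending smoothly on $\rho$, with $\tilde w(0,0)=0$, such that in the new coordinates
\begin{gather*}
f(\rho,w)=f(\rho,0)+|\tilde w(\rho,w)|^2 .
\end{gather*}
Because the change of coordinates is a local diffeomorphism fixing the origin, it suffices to prove the conclusion in the new coordinates and then transport the estimate back; shrinking $\delta$ absorbs the distortion of the coordinate change. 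I would also record that $f(\rho,0)$ is a smooth function of $\rho$ alone vanishing at $\rho=0$.

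\emph{Next}, fix $\epsilon>0$. The goal is to choose $\delta$ so small that the three running hypotheses force $|\gamma(t)|<\epsilon$. From hypothesis~(2), $\varphi(\gamma(t))\le\varphi(\gamma(0))$, and from the coercivity in hypothesis~2 of the statement, $|\rho(t)|^2\le\varphi(\gamma(t))\le\varphi(\gamma(0))$. Since $\varphi$ is smooth (hence continuous) with $\varphi(0,0)=0$, choosing $\delta$ small makes $\varphi(\gamma(0))$ small whenever $|\gamma(0)|<\delta$, so $|\rho(t)|$ stays uniformly small for all $t$. This pins down the $U$-component. For the $W$-component I would use hypothesis~(3): $|f(\gamma(t))|\le\theta(\gamma(0))$. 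In the normal-form coordinates this reads $|f(\rho(t),0)+|\tilde w(t)|^2|\le\theta(\gamma(0))$, so
\begin{gather*}
|\tilde w(t)|^2\le \theta(\gamma(0))+|f(\rho(t),0)| .
\end{gather*}
Since $\theta$ is continuous with $\theta(0,0)=0$, and $f(\rho,0)$ is continuous with $f(0,0)=0$ and $|\rho(t)|$ already controlled, the right-hand side can be made arbitrarily small by shrinking $\delta$. This bounds $|\tilde w(t)|$, hence $|w(t)|$, uniformly in $t$.

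\emph{The main obstacle} is the bookkeeping of the two nested smallness requirements and the domain of validity of the Morse coordinate change. The Morse lemma only gives the normal form on a neighborhood of the origin, so one must argue that $\gamma(t)$ never leaves that neighborhood; this is a standard continuity/connectedness argument, but it has to be set up carefully so that the a~priori estimates on $\rho(t)$ and $w(t)$ are valid exactly where the coordinate change is defined. Concretely I would choose $\delta$ in two stages: first a radius $r<\epsilon$ on which the Morse chart is defined and on which $|\rho|<r$ plus the derived bound on $|\tilde w|$ together imply $|\gamma|<\epsilon$; then a $\delta$ small enough that $\varphi(\gamma(0))$, $\theta(\gamma(0))$, and $|f(\rho,0)|$ are all below the thresholds forcing $|\gamma(t)|<r$. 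A continuity argument (the set of times where $|\gamma(t)|<r$ is open and closed in the connected parameter interval, hence everything) then upgrades the a~priori bound into the desired conclusion $|\gamma(t)|<\epsilon$ for all $t$.
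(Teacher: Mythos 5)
Your proposal follows essentially the same route as the paper: apply the Morse lemma for families to normalize $f$ in the $W$-variables, control $|\rho(t)|$ via the monotonicity of $\varphi$ along the curve together with $\varphi\ge|\rho|^2$, and control the $W$-component via the bound $|f(\gamma(t))|\le\theta(\gamma(0))$ once $f$ is in normal form. One slip to correct: the parametrized Morse lemma does not give $f(\rho,w)=f(\rho,0)+|\tilde w(\rho,w)|^2$. Since only $\d_Wf(0,0)=0$ is assumed, the critical point of $f(\rho,\cdot)$ for $\rho\ne 0$ sits at a $\rho$-dependent point $\sigma(\rho)$ (defined implicitly by $\d_Wf(\rho,\sigma(\rho))=0$), and the normal form reads $f(\rho,w)=f(\rho,\sigma(\rho))+\tfrac12\,\d^2_Wf(\rho,\sigma(\rho))(y(\rho,w),y(\rho,w))$; your formula would force $w=0$ to be a critical point of every slice $f(\rho,\cdot)$, which is false in general. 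The repair is harmless -- $f(\rho,\sigma(\rho))$ is still a continuous function of $\rho$ vanishing at $\rho=0$, which is all your subsequent estimate uses -- and the rest of your two-stage choice of $\delta$, including the connectedness argument keeping $\gamma$ inside the Morse chart (a point the paper leaves implicit), matches the paper's proof.
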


\begin{proof}
Since $0$ is a critical point of $f(0,\cdot)$ and $\d^2_Wf(0,0)$ is nondegenerate, by the Morse lemma for families \cite[Lemma~1.2.2]{D96}, there exists a neighborhood $\widetilde U$ of $0$ in~$U$, a neighborhood $\widetilde W$ of $0$ in $W$, and a smooth map $\sigma\colon \widetilde U\rightarrow \widetilde W$ implicitly defined by the equation
\begin{gather*}
\d_Wf(\rho,\sigma(\rho))=0
\end{gather*}
for all $\rho\in \widetilde U$.
Additionally, there is also a smooth map $y\colon \widetilde U\times \widetilde W\rightarrow W$ implicitly defined by the equation
\begin{gather}\label{morse1}
f(\rho,v)=f(\rho,\sigma(\rho))+\frac{1}{2}\d^2_Wf(\rho,\sigma(\rho))\big(y(\rho,w),y(\rho,w)\big)
\end{gather}
for all $(\rho,w)\in \widetilde U\times \widetilde W$.

Now let $\epsilon>0$ be given. We can always choose $\epsilon ' > 0$ such that if $|\rho|^2<\epsilon'$ and $|y(\rho,w)|^2<\epsilon'$ then $|(\rho,w)|=|\rho|^2+|w|^2<\epsilon$. Thus, if $\gamma(t)=(\rho(t),w(t))$ is a curve in $U\times W$ satisfying the hypotheses, it suffices to show that there exists a $\delta>0$ such that if $|\gamma(0)|<\delta$ then
\begin{gather*}
|\rho(t)|^2<\epsilon' \qquad\text{and} \qquad |y(\rho(t),w(t))|^2<\epsilon'.
\end{gather*}
Since $\d^2_Wf(0,0)$ is positive definite, there exists $\beta_1>0$ and $C>0$ such that if $|\rho|^2<\beta_1$ then
\begin{gather}\label{morse2}
C|w|^2<\d^2_Wf(\rho,\sigma(\rho))(w,w)
\end{gather}
for every $w\in W$. By the continuity of $f$ and $\sigma$ there exists $\beta_2>0$ such that if $|\rho|^2<\beta_2$ then
\begin{gather}\label{morse3}
|f(\rho,\sigma(\rho))|<\frac{\epsilon'C}{4}.
\end{gather}
By the continuity of $\varphi$ there exists $\alpha_1>0$ such that if $|(\rho,w)|<\alpha_1$ then
\begin{gather}\label{morse4}
|\varphi(\rho,w)|=\varphi(\rho,w)<\min (\beta_1,\beta_2,\epsilon ').
\end{gather}
By the continuity of $\theta$ there exists $\alpha_2>0$ such that if $|(\rho,w)|<\alpha_2$ then
\begin{gather}\label{morse5}
|\theta(\rho,w)|<\frac{\epsilon'C}{4}.
\end{gather}
Now set $\delta:=\min (\alpha_1,\alpha_2)$ then note that for $\gamma(t)=(\rho(t),w(t))$ satisfying the hypotheses of the proposition
\begin{align*}
|\rho(t)|^2&\le\varphi(\rho(t),w(t)) \\
&\le\varphi(\rho(0),w(0)) \\
&<\min (\beta_1,\beta_2,\epsilon')\\
&\le \epsilon',
\end{align*}
where we use inequality (\ref{morse4}).
This was the first inequality we needed to prove.

Now, since $|\rho(t)|^2<\beta_1$ for all $t$, inequality (\ref{morse2}) gives
\begin{gather}\label{morse6}
C|w|^2<\d^2_W f \big( \rho(t),\sigma(\rho(t)) \big)(w,w)
\end{gather}
for all $w\in W$ and all $t$. Similarly, since $|\rho(t)|^2<\beta_2$ for all $t$, inequality (\ref{morse3}) gives
\begin{gather}\label{morse7}
\big|f\big(\rho(t),\sigma(\rho(t))\big)\big|<\frac{\epsilon'C}{4}
\end{gather}
for all $t$. Finally, note that
\begin{align*}
\big|y\big(\rho(t),\sigma(\rho(t))\big)\big|^2
& \le \frac{1}{C} \d^2_W f \big( \rho(t),\sigma(\rho(t)) \big) \big(y(\rho(t),\sigma(t)),y(\rho(t),\sigma(t))\big)
\qquad \text{by (\ref{morse6})} \\
& \le \frac{2}{C}\big( \big|f\big(\rho(t),w(t)\big)\big|+\big|f\big(\rho(t),\sigma(\rho(t))\big)\big|\big)
\qquad \text{ by (\ref{morse1})}\\
& \le \frac{2}{C}\big( \theta\big(\rho(0),w(0)\big)+\big|f\big(\rho(t),\sigma(\rho(t))\big)\big|\big)
\qquad \text{by assumption (3) on }\gamma\\
&<\frac{2}{C}\left(\frac{\epsilon ' C}{4}+\frac{\epsilon ' C}{4}\right)
\qquad \text{ by (\ref{morse5}) \& (\ref{morse7})}
\\
&=\epsilon '.
\end{align*}
This was the second inequality we needed to prove. Thus, we have that $|\gamma(0)|<\delta$ implies $|\gamma(t)|<\epsilon$ for all $t$ as desired.
\end{proof}

We will also need the following standard construction:

\begin{Definition}\label{Def:SymplecticSlice}
Let $(M,\omega,G,\Phi)$ be a Hamiltonian $G$-space and let $m\in M$ be a point in the manifold $M$. The {\it symplectic slice} at the point $m$ is the vector space
\begin{gather*}
W:=\ker\d\Phi_m/\big(T_m(G\cdot m)\cap\ker\d\Phi_m\big).
\end{gather*}
\end{Definition}

\begin{Remark}\label{Rem:SymplecticSlice}
Let $(M,\omega,G,\Phi)$ be a Hamiltonian $G$-space and let $W$ be the symplectic slice at a point $m\in M$. The symplectic slice inherits a canonical Hamiltonian representation of the stabilizer subgroup $G_m$ (see, for example, \cite[Theorem~7.1.1(iii)]{OR04}). Furthermore, let $\mu$ be the moment of the point $m$ and observe that $T_m(G\cdot m)\cap\ker\d\Phi_m=T_m(G_\mu\cdot m)$. Thus, if the moment is fixed by the coadjoint representation, then the symplectic slice takes the form $W=\ker\d\Phi_m/T_m(G\cdot m)$.
\end{Remark}

We recall the standard Marle--Guillemin--Sternberg normal form:

\begin{Theorem}[MGS normal form \cite{GuS84, Ma85}]\label{Thm:MGSNormalForm}
Let $(M,\omega,G,\Phi)$ be a Hamiltonian $G$-space. Let \smash{$m\in M$} be a point in the manifold with moment $\Phi(m)=0$ and suppose the moment isotro\-py~$G_{\Phi(m)}$ acts properly on the manifold. Let $K$ be the stabilizer of the point $m\in M$, let $\ffk$ be the corresponding Lie algebra, let $\ffk^0$ be the annihilator of $\ffk$, and let $W$ be the symplectic slice at the point~$m$ $($Definition~{\rm \ref{Def:SymplecticSlice})}.
Given a $K$-equivariant embedding $\iota\colon \ffk^*\hookrightarrow\ffg^*$, there exist $K$-invariant open neighborhoods $\ffk_r^0\subseteq \ffk^0$ and $W_r\subseteq W$ of the origins in the respective vector spaces~$\ffk^0$ and~$W$, a $G$-invariant symplectic form $\omega_Z$, and a~$G$-equivariant map $\Phi_Z\colon Z\to\ffg^*$ on the associated bundle
\begin{gather*}
Z:=G\times^K\big(\ffk^0_r\times W_r\big),
\end{gather*}
such that the quadruple $(Z,\omega_z,G,\Phi_Z)$ is a Hamiltonian $G$-space.
Furthermore:
\begin{enumerate}\itemsep=0pt
\item[$1.$] There exists a $G$-invariant neighborhood $U_m\subseteq M$ of the orbit $G\cdot m$, and a $G$-equivariant symplectomorphism $\Psi\colon Z\to U_m$ such that $\Psi^*\Phi=\Phi_Z$.
\item[$2.$] The momentum map $\Phi_Z\colon Z\to\ffg^*$ is given by
\begin{gather*}
\Phi_Z ([g,\rho,w] )=\CoAd(g)\big(\rho+\iota(\Phi_W(w))\big),
\end{gather*}
where $\Phi_W\colon W\to\ffk^*$ is the momentum map for the representation of the stabilizer $K$ on the symplectic slice~$W$.
\end{enumerate}
\end{Theorem}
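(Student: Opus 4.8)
The plan is to construct the model Hamiltonian $G$-space $(Z,\omega_Z,G,\Phi_Z)$ abstractly, check that its first-order data at the base point $z_0:=[1,0,0]$ agrees with that of $(M,\omega,G,\Phi)$ at $m$, and then invoke an equivariant Moser argument to promote a first-order match into an honest $G$-equivariant symplectomorphism. Since $M$ is a proper $G$-manifold the stabilizer $K=G_m$ is compact, and by Palais' slice theorem (Remark~\ref{Rem:SliceExistence}) a $G$-invariant neighborhood of the orbit $G\cdot m$ is $G$-equivariantly diffeomorphic to an associated bundle $G\times^KS$ for a $K$-representation slice $S$. The whole problem therefore takes place on a tube, and the associated-bundle and equivariant-extension machinery of Section~\ref{IsosAndRelEqs}, in particular Lemma~\ref{Lemma:EquivExtension}, is available. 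First I would fix a $K$-invariant splitting $\ffg=\ffk\oplus\ffm$ compatible with the given embedding $\iota$, so that $\ffk^0$ is the annihilator of $\ffm$ and $\iota\colon\ffk^*\hookrightarrow\ffg^*$ is dual to the projection onto $\ffk$.

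Next I would build the model by symplectic reduction. Endow $T^*G\times W$ with the product of the canonical form on $T^*G$ and the linear symplectic form on the symplectic slice $W$, let $K$ act diagonally by the cotangent lift of right translation on $G$ and by the Hamiltonian representation of $K$ on $W$ from Remark~\ref{Rem:SymplecticSlice}, and reduce at level $0$. Writing $T^*G\cong G\times\ffg^*$ in the left trivialization, the right-$K$ momentum map is $(g,\mu)\mapsto\mu|_\ffk$, so the zero level of the diagonal momentum map is $\{\mu|_\ffk=-\Phi_W(w)\}$; parametrizing $\mu=\rho+\iota(\Phi_W(w))$ with $\rho\in\ffk^0$ and quotienting by $K$ yields exactly $Z=G\times^K(\ffk^0\times W)$ on a neighborhood of the zero section. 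The residual left $G$-action is Hamiltonian, giving the $G$-invariant form $\omega_Z$ and the $G$-equivariant map $\Phi_Z([g,\rho,w])=\CoAd(g)(\rho+\iota(\Phi_W(w)))$; this establishes conclusion~(2) and that $(Z,\omega_Z,G,\Phi_Z)$ is a Hamiltonian $G$-space.

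Third I would match first-order data at $z_0$, whose stabilizer is $K$. Because $\Phi(m)=0$, the identity $\d\langle\Phi,\eta\rangle=\iota_{\eta_M}\omega$ gives $\d\Phi_m(\xi_M(m))=-\ad^*_\xi\Phi(m)=0$, so the orbit $G\cdot m$ is isotropic at $m$ and $\ker\d\Phi_m=(T_m(G\cdot m))^{\omega_m}$. The Witt--Artin decomposition then splits $T_mM$ $K$-equivariantly into $T_m(G\cdot m)\cong\ffg/\ffk$, an isotropic complement dually paired with it and identified with $\ffk^0$, and the symplectic slice $W=\ker\d\Phi_m/T_m(G\cdot m)$ (Definition~\ref{Def:SymplecticSlice}, using $\mu=0$ as in Remark~\ref{Rem:SymplecticSlice}). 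By construction $T_{z_0}Z$ admits the identical decomposition, so I obtain a $K$-equivariant linear isomorphism $T_mM\to T_{z_0}Z$ intertwining both the symplectic forms and the linearized momentum maps; in particular the slice $S$ is identified with $\ffk^0\times W$ as a $K$-representation.

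Finally, composing the Palais diffeomorphism with this linear identification produces a $G$-equivariant diffeomorphism $\Psi_0$ from a neighborhood of the zero section of $Z$ onto a $G$-invariant neighborhood $U_m$ of $G\cdot m$, agreeing to first order along the orbit with the desired symplectomorphism, so that the $G$-invariant forms $\omega_Z$ and $\Psi_0^*\omega$ coincide along $G\cdot m$. An equivariant relative Darboux--Weinstein/Moser argument then yields a $G$-equivariant isotopy fixing $G\cdot m$ that straightens $\Psi_0^*\omega$ into $\omega_Z$, whose correction to $\Psi_0$ gives $\Psi$ with $\Psi^*\omega=\omega_Z$; since equivariant momentum maps for a fixed symplectic action differ by a $\CoAd$-fixed constant and both sides vanish at $z_0$, one concludes $\Psi^*\Phi=\Phi_Z$, which is conclusion~(1). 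I expect the Moser step to be the main obstacle: because $G$ need not be compact, the homotopy must be run $K$-equivariantly on the compact-group slice $\ffk^0\times W$, where averaging and the relative Poincar\'e lemma apply, and then extended $G$-equivariantly to the tube via Lemma~\ref{Lemma:EquivExtension}, all while preserving compatibility with the momentum map throughout the interpolation.
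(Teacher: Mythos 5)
The paper does not prove Theorem~\ref{Thm:MGSNormalForm}: it is imported wholesale from \cite{GuS84,Ma85} (with Remark~\ref{Rem:OnMGSForm} pointing to \cite[Theorem~7.5.5]{OR04} for the general-moment version), so there is no in-paper argument to compare yours against. Your sketch is the standard proof from those references, and its architecture is sound: the model $(Z,\omega_Z,G,\Phi_Z)$ as the zero-level reduction of $T^*G\times W$ by the diagonal right $K$-action (which immediately yields conclusion~(2) and the Hamiltonian $G$-space structure), the Witt--Artin splitting of $T_mM$ at a point with $\Phi(m)=0$ into $\ffg/\ffk\oplus\ffk^0\oplus W$ to match first-order data, and an equivariant relative Darboux--Weinstein argument to upgrade the match; the endgame for $\Psi^*\Phi=\Phi_Z$ (two equivariant momentum maps for the same action differ by a locally constant $\CoAd$-fixed element, and both vanish at $z_0$ because the Moser isotopy fixes the orbit) is also correct.

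The one step I would not accept as written is your plan for the Moser stage: you propose to ``run the homotopy $K$-equivariantly on the compact-group slice $\ffk^0\times W$'' and then extend to the tube via Lemma~\ref{Lemma:EquivExtension}. The two-form $\Psi_0^*\omega-\omega_Z$ is not determined by its restriction to the slice -- it has mixed components pairing orbit directions with slice directions -- so there is no Moser problem posed purely on $\ffk^0\times W$ whose solution you could then extend. What actually handles the noncompactness of $G$ is different: the fiberwise radial retraction of the tube onto the orbit $G\cdot m$ is $G$-equivariant, so the homotopy-operator primitive $\sigma$ of $\Psi_0^*\omega-\omega_Z$ can be chosen $G$-invariant and vanishing to first order along the orbit; the resulting Moser field $X_t$ (defined by $\iota_{X_t}\omega_t=-\sigma$) is then $G$-equivariant, and time-$1$ existence of its flow on a possibly smaller tube follows from $G$-invariance together with compactness of $K$, which gives uniform control over the orbit after shrinking $\ffk^0_r\times W_r$. (Your instinct is recoverable in the weaker sense that the $G$-equivariant field $X_t$ is the equivariant extension, in the sense of Lemma~\ref{Lemma:EquivExtension}, of its restriction to the slice viewed as a map $V\to T(G\cdot V)$ -- but that restriction is not tangent to $V$, so this is bookkeeping, not a reduction of the analysis to the slice.) With that step repaired, the sketch is a faithful outline of the proof in \cite{GuS84,Ma85,OR04}; compare also the summary in \cite[Section~3]{LS98} for the $\mu=0$ case used here.
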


\begin{Definition}\label{Def:MGSNormalForm}
Let $(M,\omega,G,\Phi)$ be a Hamiltonian $G$-space and let $m\in M$ be a point with moment $\Phi(m)=0$. The {\it Marle--Guillemin--Sternberg normal form} (MGS normal form) is the Hamiltonian $G$-space $\big(Z:=G\times^K\big(\ffk_r^0\times W_r\big),\omega_Z, G, \Phi_Z\big)$ of Theorem~\ref{Thm:MGSNormalForm}. For a Hamiltonian $G$-system $(M,\omega,G,\Phi,h)$ with a point $m\in M$ with moment $\Phi(m)=0$, the MGS normal form is the Hamiltonian $G$-system $(Z,\omega_Z, G, \Phi_Z,h_Z)$ where $(Z,\omega_Z, G, \Phi_Z)$ is the MGS normal form of the underlying Hamiltonian $G$-space and the Hamiltonian function $h_Z$ is the pullback of the Hamiltonian $h$ by the symplectomorphism $\Psi$ in part (1) of Theorem~\ref{Thm:MGSNormalForm}.
\end{Definition}

\begin{Remark}\label{Rem:OnMGSForm}
Let $(M,\omega,G,\Phi)$ be a Hamiltonian $G$-space.
Let $m\in M$ be a point with moment $\Phi(m)=0$, and such that the moment isotropy $G_{\Phi(m)}$ acts properly on $M$.
Let $K$ be the stabilizer of $m$, and let $\ffk$ and $\ffk^0$ be the Lie algebra of $K$ and the annihilator of $\ffk$ respectively.
Then:
\begin{enumerate}\itemsep=0pt
\item Since $\Phi(m)=0$, the moment of $m$ is fixed by the coadjoint representation of the Lie group $G$.
Hence, $G_{\Phi(m)}=G$ and thus the Lie group $G$ acts properly on the manifold $M$.
In this case, a $G_m$-equivariant embedding $\iota\colon \ffk^*\hookrightarrow \ffg$ is guaranteed to exist (see \cite[Remark~3.2]{LS98}).
\item Let $(Z,\omega_Z,G,\Phi_Z)$ be the MGS normal form for the Hamiltonian $G$-space $(M,\omega,G,\Phi)$ at the point~$m$. The manifold $V:=\ffk_r^0\times W_r\subseteq \ffk^0\times W$ is a global slice for the action of~$G$ on the manifold~$Z$ through the point $[1,0,0]$ (see Definition~\ref{Def:Slices}). The corresponding $K$-equivariant embedding $j\colon V\hookrightarrow Z$ is defined by $j(\rho,w):=[1,\rho,w]$.
\item One can construct the MGS normal form without requiring that the moment $\Phi(m)$ be fixed by the coadjoint representation (see, for example, \cite[Theorem~7.5.5]{OR04}). However, when it is fixed, the MGS normal form, the expression for the moment map, and some of our arguments become much simpler.
\end{enumerate}
\end{Remark}

We can now prove the following, which is Theorem~\ref{Thm:MROTheorem} for the special case where the Hamiltonian $G$-system is in MGS normal form and we've made the assumptions in Remark~\ref{Rem:FixedMomentum}. In Section~\ref{MGSNormalForm}, we reduce Theorem~\ref{Thm:MROTheorem} to this special case.

\begin{Theorem}\label{Thm:MainTheorem2} Let $G$ be a Lie group with Lie algebra $\ffg$, let $K$ be a compact Lie subgroup of $G$ with Lie algebra $\ffk$, let $V$ be a $K$-manifold, and let $Z$ be the associated bundle $G\times^{K}V$. Suppose that $(Z,\omega,G,\Phi,h)$ is a~Hamiltonian $G$-system. Furthermore:
\begin{enumerate}\itemsep=0pt
\item[$1.$] Let $\ffk_r^0$ be a $K$-invariant neighborhood of the origin in the annihilator $\ffk^0$ of the Lie algebra~$\ffk$.
\item[$2.$] Let $W_r$ be a $K$-invariant neighborhood of the origin in a symplectic vector space~$W$, where~$W$ has a Hamiltonian representation of the group $K$ and corresponding equivariant momentum map $\Phi_W\colon W\to \ffk^*$.
\item[$3.$] Suppose that the $K$-manifold $V$ is the product $\ffk^0_r\times W_r$, and let the $K$-equivariant embedding $j\colon V\hookrightarrow Z$ be defined by $j(\rho,w):=[1,\rho,w]$.
\item[$4.$] Suppose there exists an $\Ad(G)$-invariant inner product on the Lie algebra $\ffg$, let $\ffq$ be the orthogonal complement to $\ffk$ in $\ffg$ with respect to this inner product, and let $\bbp\colon \ffg\to\ffk$ be the corresponding projection;
\item[$5.$] Let the map $\iota\colon \ffk^*\hookrightarrow\ffg^*$ be the $K$-equivariant embedding defined for every $\rho\in\ffk^*$ by
\begin{gather*}
\iota(\rho)\colon \ \ffg\to\bbr, \qquad \iota(\rho)(\eta):=\rho(\bbp(\eta)).
\end{gather*}
\item[$6.$] For all $[g,\rho,w]\in Z$, let the momentum map $\Phi\colon Z\to\ffg^*$ be given by
\begin{gather*}
\Phi([g,\rho,w])=\CoAd(g)\big(\rho+\iota(\Phi_W(w))\big).
\end{gather*}
\item[$7.$] Suppose that the point $m:=[1,0,0]\in Z$ is a $G$-relative equilibrium of the Hamiltonian vector field~$\Xi_h$, and suppose it is such that $h(m)=0$ and $\Phi(m)=0$.
\end{enumerate}
If there exists a velocity $\xi\in\ffg$ such that the Hessian $\d^2h^\xi(m)$ is definite and nondegenerate on the subspace $T_mj (\{0\}\times W_r )\subseteq T_mZ$, where $h^\xi:=h-\langle\Phi,\xi\rangle$ is the augmented Hamiltonian, then the relative equilibrium~$m$ is $G$-stable.
\end{Theorem}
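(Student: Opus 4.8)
The plan is to obtain $G$-stability from the slice stability criterion (Theorem~\ref{Thm:MainTheorem1}), applied to $X=\Xi_h$ on the tube $Z=G\cdot V$ with the global slice $V=\ffk^0_r\times W_r$ through $m=[1,0,0]$, whose stabilizer is $G_m=K$. The underlying space $Z$ is a proper $G$-manifold, and since $\Phi(m)=0$ is coadjoint-fixed we have $G_\mu=G$, so it suffices to prove $G$-stability. The splitting $\ffg=\ffk\oplus\ffq$ of hypothesis~(4) determines the projection functor $P$ (Definition~\ref{Def:ProjectionFunctor}) and the vertical projection $\nu$ (Definition~\ref{Def:SplitConnProj}). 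The whole point is to manufacture, out of the augmented Hamiltonian, a vector field $Y$ on $V$ that is isomorphic to $P(\Xi_h)$ and whose origin is a stable equilibrium, the latter checked through the Morse-lemma estimate of Proposition~\ref{Prop:MorseApp}.

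First I would set $\eta:=\bbp(\xi)\in\ffk$, so $\xi=\eta+\zeta$ with $\zeta\in\ffq$, and define
\begin{gather*}
Y:=j^*\nu(\Xi_{h^\eta})=P(\Xi_h)-\eta_V,
\end{gather*}
where $\Xi_{h^\eta}=\Xi_h-\eta_Z$. The second equality uses that $\eta\in\ffk$ makes $\eta_Z$ vertical, so $\nu(\eta_Z)=\eta_Z$ and $j^*\eta_Z=\eta_V$, the fundamental field of $\eta$ for the $K$-representation on $V$. Taking $H:=K_\eta=\{k\in K\,|\,\Ad(k)\eta=\eta\}$ and the constant infinitesimal gauge transformation $\psi:=-\eta\in C^\infty(V,\ffk_\eta)^{K_\eta}$, we have $Y=P(\Xi_h)+\psi_V$, so $Y$ is $H$-equivariant and $H$-isomorphic to $P(\Xi_h)$; this is hypothesis~(1) of Theorem~\ref{Thm:MainTheorem1}. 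Moreover $\mathbf 0=j^{-1}(m)$ is an equilibrium of $Y$: since $\xi$ is a velocity and $\eta_Z(m)=0$, we get $\Xi_{h^\eta}(m)=\zeta_Z(m)\in T_m(G\cdot m)$, and because $T_m(G\cdot m)$ is the horizontal space of the induced connection at $m$ (as in the proof of Proposition~\ref{Prop:RelEqPres}(3)), $\nu(\Xi_{h^\eta})(m)=0$.

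Next I would verify hypothesis~(2), namely that $\mathbf 0$ is $H$-stable for $Y$; plain stability suffices, as it implies stability modulo any subgroup. Apply Proposition~\ref{Prop:MorseApp} with its space $U$ taken to be $\ffk^0$ and its space $W$ the symplectic slice, and with
\begin{gather*}
f:=h^\xi\circ j,\qquad \varphi:=\|\Phi\|_{\ffg^*}^2\circ j,\qquad \theta:=|h\circ j|+\|\xi\|_\ffg\,\|\Phi\|_{\ffg^*}\circ j.
\end{gather*}
Since $\xi$ is a velocity, $m$ is a critical point of $h^\xi$ (Lemma~\ref{Lemma:HamRelEqChar}), so $f(\mathbf 0)=h^\xi(m)=0$ and $\d f(\mathbf 0)=0$; by Remark~\ref{Rem:Hessians} the Hessian $\d^2_Wf(\mathbf 0)$ is the restriction of $\d^2h^\xi(m)$ to $T_mj(\{0\}\times W_r)$, which is definite and nondegenerate by assumption (replace $f$ by $-f$ if it is negative definite). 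Using $\Phi(j(\rho,w))=\rho+\iota(\Phi_W(w))$ and the orthogonality of $\ffk^0$ and $\iota(\ffk^*)$ in $\ffg^*$ (with the norm on $\ffk^0$ taken to be the restriction of $\|\cdot\|_{\ffg^*}$), one gets
\begin{gather*}
\varphi(\rho,w)=\|\rho\|_{\ffg^*}^2+\|\iota(\Phi_W(w))\|_{\ffg^*}^2\ge|\rho|^2,
\end{gather*}
while $\theta$ is continuous, nonnegative and vanishes at $\mathbf 0$. By Corollary~\ref{Cor:ConstantsVerticalAugHam} the functions $h$ and $\|\Phi\|_{\ffg^*}^2$ are constants of motion of $\nu(\Xi_{h^\eta})$, hence $h\circ j$ and $\varphi$ are conserved along integral curves $\gamma$ of $Y$; this gives hypothesis~(2) of the proposition, and with Cauchy--Schwarz,
\begin{gather*}
|f(\gamma(t))|\le|h(j(\gamma(0)))|+\|\Phi(j(\gamma(0)))\|_{\ffg^*}\,\|\xi\|_\ffg=\theta(\gamma(0)),
\end{gather*}
which is hypothesis~(3). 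Proposition~\ref{Prop:MorseApp} then yields stability of $\mathbf 0$ for $Y$, and Theorem~\ref{Thm:MainTheorem1} concludes that $m$ is $G$-stable; as $G_\mu=G$, this is the asserted $G_\mu$-stability.

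The hard part will be the stability step. The natural Lyapunov candidate $h^\xi$ is \emph{not} conserved along $Y$, because neither $\Xi_{h^\eta}$ nor its vertical projection is Hamiltonian; only the energy $h$ and the single quantity $\|\Phi\|_{\ffg^*}^2$ survive as constants of motion. The resolution is exactly the shape of Proposition~\ref{Prop:MorseApp}: the conserved $\varphi=\|\Phi\|_{\ffg^*}^2\circ j$ controls the $\ffk^0$-directions, the definite Hessian of $f=h^\xi\circ j$ controls the symplectic-slice directions, and the non-conserved coadjoint motion of $\Phi$ is absorbed into the auxiliary bound $\theta$ via Cauchy--Schwarz. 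A secondary point requiring care is the inequality $\varphi\ge|\rho|^2$, which relies on the orthogonality engineered into the embedding $\iota$ together with the $\Ad(G)$-invariant inner product; and one must use $\eta=\bbp(\xi)\in\ffk$ (not $\xi$ itself) so that Corollary~\ref{Cor:ConstantsVerticalAugHam} applies, while still reading the Hessian off the full augmented Hamiltonian $h^\xi$.
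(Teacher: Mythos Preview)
Your proposal is correct and follows essentially the same strategy as the paper: apply the slice stability criterion with $H=K_\eta$ and $Y=j^*\nu(\Xi_{h^\eta})$, then verify stability of the origin via Proposition~\ref{Prop:MorseApp} using the conserved quantities $h$ and $\|\Phi\|_{\ffg^*}^2$ supplied by Corollary~\ref{Cor:ConstantsVerticalAugHam}. The only differences are cosmetic: you take $f=h^\xi\circ j$ (so $\d f(\mathbf 0)=0$ is immediate from $m$ being a critical point of $h^\xi$), whereas the paper takes $f=j^*h^\eta$ and first shows $h^\xi|_{j(W_r)}=h^\eta|_{j(W_r)}$; and you use the dual-norm inequality $|\langle\rho,\nu\rangle|\le\|\rho\|_{\ffg^*}\|\nu\|_\ffg$ directly, while the paper routes through the sup norm and an equivalence constant $A$.
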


\begin{proof}
We want to prove the $G$-stability of the $G$-relative equilibrium $m=[1,0,0]\in Z$ of the Hamiltonian vector field $\Xi_h$ on the manifold $Z$.
Observe that the stabilizer of the point $m$ is the Lie subgroup $K$ acting on~$Z$ as a subgroup of the group~$G$. Also observe that the $K$-manifold $V$, together with the $K$-equivariant embedding $j\colon V\hookrightarrow Z$, is a global slice for the action through the point~$m$ (see Remark~\ref{Rem:OnMGSForm}). Thus, in particular, the manifold $Z$ is the tube $G\cdot V$ generated by the slice $V$. Observe that the complement $\ffq$ gives a $K$-equivariant splitting $\ffg=\ffk\oplus\ffq$, and hence we obtain a projection functor $P\colon \bbx(G\times Z\toto Z)\to\bbx(K\times V\toto V)$ with respect to the splitting (Definition~\ref{Def:ProjectionFunctor}). The proof is an application of the slice stability criterion (Theo\-rem~\ref{Thm:MainTheorem1}). Thus, we need to give a Lie subgroup $H$ of the stabilizer $K$, and a vector field $Y$ on the slice $V$ that is $H$-isomorphic to the projected vector field $P(\Xi_h)$. Furthermore, we need to show that the point $\vzero=j^{-1}(m)$ is $H$-stable for the vector field $Y$.

Use the $K$-equivariant projection $\bbp\colon \ffg\to\ffk$ to define the vector $\eta:=\bbp(\xi)$, which is the projection of the velocity onto the isotropy Lie algebra. Now consider the Lie subgroup $G_\eta:=\{g\in G \,|\, \Ad(g)\eta=\eta\}$ and the Lie subgroup
\begin{gather*}
H:=(G_\eta)_m\equiv(G_m)_\eta\equiv\{g\in G_m\,|\,\Ad(g)\eta=\eta\}.
\end{gather*}
The subgroup $H$ is the Lie subgroup of the stabilizer that we will use to apply the slice stability criterion.

The inner product on $\ffg$ also determines an $H$-equivariant splitting $\ffg_\eta=\ffh\oplus\ffp$ of the Lie algebra $\ffg_\eta$ of the subgroup~$G_\eta$. Let $P_{G_\eta}\colon \bbx(G_\eta\times Z \toto Z)\to\bbx(H\times V\toto V)$ be the projection functor of $G_\eta$-equivariant vector fields with respect to this splitting of $\ffg_\eta$ (Definition~\ref{Def:ProjectionFunctorSubgroup}). Let $h^\eta:=h-\langle\Phi,\eta\rangle$ be the Hamiltonian augmented by the vector $\eta$ and let $\Xi_{h^\eta}=\Xi_h-\eta_Z$ be the corresponding augmented Hamiltonian vector field (Definition~\ref{Def:AugHam}). Take the vector field~$Y$, required for the application of the slice stability criterion, to be the projected vector field $Y:=P_{G_\eta}(\Xi_{h^\eta})$. Note that the Hamiltonian vector field $\Xi_h$ is in particular $G_\eta$-equivariant and, by Lemma~\ref{Lemma:AugIso}, it is $G_\eta$-isomorphic to the vector field $\Xi_{h^\eta}$. Thus, by the functoriality of $P_{G_\eta}$, the vector field~$Y$ is $H$-isomorphic to the vector field~$P_{G_\eta}(\Xi_h)$. Observe that $P_{G_\eta}(\Xi_h)=P(\Xi_h)$, so the vector fields $Y$ and $P(\Xi_h)$ are $H$-isomorphic.

It remains to show that the origin $j^{-1}(m)$ is $H$-stable for the vector field $Y$. By Lemma~\ref{Lemma:SubgroupStability}, it is enough to show that the origin $j^{-1}(m)$ is $\{1\}$-stable for~$Y$. That is, we show that for any $\epsilon>0$, there exists a $\delta>0$, with $0<\delta<\epsilon$, such that all maximal integral curves of the vector field $Y$ starting at points in the $\delta$-ball around the origin stay in the $\epsilon$-ball around the origin for all times for which they are defined. Proposition~\ref{Prop:MorseApp} guarantees we can obtain such a~$\delta$. In particular, we apply Proposition~\ref{Prop:MorseApp} with the vector space~$\ffk^0$ in place of~$U$ and the vector space~$W$ of this theorem in place of the~$W$ in the proposition.

In order to apply Proposition \ref{Prop:MorseApp} we need to pick norms on the spaces $\ffk^0$ and~$W$. For this, let $||\cdot||_\ffg$ be the $G$-invariant norm on the Lie algebra induced by the given inner product on~$\ffg$, let $||\cdot||_{\ffg^*}$ be the dual norm induced by the dual inner product on~$\ffg^*$, and let $||\cdot||_{\infty}$ be the corresponding $G$-invariant sup norm on the dual~$\ffg^*$ (see Remark~\ref{Rem:NormsOnLie}). The norm $||\cdot||_{\ffg^*}$ gives an invariant norm on the subspace~$\ffk^0$ of~$\ffg^*$, pick this norm for~$\ffk^0$. Pick any norm on the vector space~$W$. This works since the slice~$V$ is contained in the vector space $\ffk^0\times W$. Since all norms on a finite-dimensional vector space are equivalent, there exists a constant $A>0$ such that
\begin{gather}\label{Eq:EquivalentNorms}
||\cdot||_{\infty} \le A||\cdot ||_{\ffg^*}.
\end{gather}
We apply Proposition \ref{Prop:MorseApp} with the functions
\begin{align}\label{Eq:MorseAppFunctions}
\begin{split}
& f:= j^*h^\eta \in C^\infty(V), \\
& \varphi:=||j^*\Phi||^2_{\ffg^*}\in C^\infty(V), \\
& \theta:=|j^*h^\eta|+A||j^*\Phi||_{\ffg^*}||\eta||_{\ffg}\in C^0(V).
\end{split}
\end{align}
To complete the proof we need to show that the functions in~(\ref{Eq:MorseAppFunctions}) and the maximal integral curves of the vector field $Y$ satisfy the hypotheses of Proposition~\ref{Prop:MorseApp}.

Now we verify that the functions (\ref{Eq:MorseAppFunctions}) satisfy the conditions of Proposition~\ref{Prop:MorseApp}. For the function $f$ we need to show that $f(\vzero)=0$, $\d_Wf(\vzero)=0$, and that the Hessian $\d^2_Wf(\vzero)$ is definite and nondegenerate. Here $\d_Wf(\vzero)$ and $\d^2_Wf(\vzero)$ are respectively the differential and the Hessian of the function $f$ in the $W$ variables. For the first, note that
\begin{gather*}
f(\vzero)
=h^\eta(j(\vzero))
=h^\eta(m)
=h(m)-\langle\Phi(m),\eta\rangle
=0-\langle 0, \eta\rangle
=0.
\end{gather*}
Now note that the image of the dual $\ffk^*$ under the $K$-equivariant embedding $\iota\colon \ffk^*\hookrightarrow\ffg^*$ is the annihilator $\ffq^0$.
Thus we have that
\begin{gather}\label{Eq:MomentImage}
\iota(\Phi_W(w))\in \ffq^0
\end{gather}
for all $w\in W$.
Let $\xi^\perp\in\ffq$ be the vector in $\ffq$ such that $\xi=\eta+\xi^\perp$. Then, using (\ref{Eq:MomentImage}) and the explicit form of the momentum map, we have that
\begin{gather*}
\langle\Phi,\xi\rangle([1,0,w]) =\langle\Phi,\eta\rangle([1,0,w])+\big\langle\iota(\Phi_W(w)),\xi^\perp\big\rangle =\langle\Phi,\eta\rangle([1,0,w])
\end{gather*}
for all $w\in W$. This in turn implies that $h^\xi|_{j(W_r)}=h^\eta|_{j(W_r)}$, where we identify the spaces $W_r$ and $\{0\}\times W_r\subseteq V$.
Hence,
\begin{gather*}
0=\d h^\xi(m)|_{T_mj(W_r)} =\d \big(h^\xi|_{j(W_r)}\big)(\vzero)
=\d \big(h^\eta|_{j(W_r)}\big)(\vzero) =\d \big(j^*h^\eta|_{W_r}\big)(\vzero)=\d_Wf(\vzero),
\end{gather*}
which is the second condition we needed to check. This condition guarantees that the Hessian $\d^2_Wf(\vzero)$ is well-defined. The equality $h^\xi|_{j(W_r)}=h^\eta|_{j(W_r)}$ and the pullback properties of Hessians (see Remark~\ref{Rem:Hessians}) imply that
\begin{gather*}
\d^2h^\xi(m)|_{T_mj(W_r)} =\d^2\big(h^\xi|_{j(W_r)}\big)(\vzero)=\d^2\big(h^\eta|_{j(W_r)}\big)(\vzero)=\d^2\big(j^*h^\eta|_{W_r}\big)(\vzero)
=\d^2_Wf(\vzero).
\end{gather*}
Hence, the Hessian $\d^2_Wf(\vzero)$ is definite and nondegenerate since $\d^2h^\xi(m)|_{T_mj(W_r)}$ is such by assumption. It is of no loss of generality to assume that the Hessian $\d^2h^\xi(m)|_{T_mj(W_r)}$ is positive definite. Hence, the Hessian $\d^2_Wf(\vzero)$ is positive definite too. This was the third and last thing we needed to verify that $f$ satisfies.

For the function $\varphi$ we need to show that $\varphi(\vzero)=0$ and that $\varphi(\rho,w)\ge ||\rho||^2_{\ffg^*}$ for all $(\rho,w)\in V$. For the first, note
\begin{gather*}
\varphi(\vzero)=||j^*\Phi||^2_{\ffg^*}(\vzero)=||\Phi(m)||^2_{\ffg^*}=||0||^2_{\ffg^*}=0.
\end{gather*}
For the second, note that for all $(\rho,w)\in V$ we have that $\rho\in\ffk^0$ and $\iota(\Phi_W(w))\in\ffq^0=(\ffk^\perp)^0=(\ffk^0)^\perp$. Thus, for all $(\rho,w)\in V$ we have that
\begin{gather*}%\label{Eq:MomentEstimate}
\varphi(\rho,w)=||\rho+\iota(\Phi_W(w))||_{\ffg^*}^2=||\rho||_{\ffg^*}^2+||\iota(\Phi_W(w))||_{\ffg^*}^2\ge ||\rho||_{\ffg^*}^2.
\end{gather*}
Hence, $\varphi$ satisfies the desired conditions.

For the function $\theta=|j^*h^\eta|+A||j^*\Phi||_{\ffg^*}||\eta||_\ffg$ we need to show that $\theta(\vzero)=0$. Using that $h(m)=0$ and $\Phi(m)=0$, note we have that
\begin{align*}
\theta(\vzero)
&=|j^*h^\eta(\vzero)|+A||j^*\Phi(\vzero)||_{\ffg^*}||\eta||_\ffg\\
&=|h(m)-\langle\Phi(m),\eta\rangle|+A||\Phi(m)||_{\ffg^*}||\eta||_\ffg\\
&=|h(0)-\langle0,\eta\rangle|+A||0||_{\ffg^*}||\eta||_\ffg\\
&=0.
\end{align*}
Hence, the function $\theta$ satisfies the desired condition.

We now verify the other set of hypotheses of Proposition \ref{Prop:MorseApp}.
Let $\beta$ be an arbitrary maximal integral curve of the vector field $Y$.
Then we need to show that
\begin{align}\label{Eq:Hypotheses1}
\begin{split}
\varphi(\beta(t)) \le \varphi(\beta(0))
\qquad\text{ and }\qquad
|f(\beta(t))| \le \theta(\beta(0))
\end{split}
\end{align}
for all times $t$ for which the curve $\beta$ is defined.

In place of the first inequality in (\ref{Eq:Hypotheses1}), we actually prove the equality $\varphi(\beta(t))=\varphi(\beta(0))$ for all times $t$ such that the integral curve $\beta$ is defined.
Let $\nu\colon \Gamma(TZ)\to\Gamma(\calv Z)$ be the vertical projection with respect to the splitting $\ffg=\ffk\oplus\ffq$ (Definition \ref{Def:SplitConnProj}).
Since the vector fields $\nu\left(\Xi_{h^\eta}\right)$ and $Y$ are $j$-related (see Definition \ref{Def:ProjectionFunctorSubgroup}), the curve $\alpha:=j\circ\beta$ is a maximal integral curve of the vector field $\nu\left(\Xi_{h^\eta}\right)$.
By Corollary \ref{Cor:ConstantsVerticalAugHam}, the function $||\Phi||^2_{\ffg^*}$ is a constant of motion of the vector field $\nu\left(\Xi_{h^\eta}\right)$.
Therefore, we have that
\begin{gather*}
\varphi(\beta(t))
=||j^*\Phi||^2_{\ffg^*}(\beta(t))
=\underbrace{
||\Phi||^2_{\ffg^*}(\alpha(t))
=||\Phi||^2_{\ffg^*}(\alpha(0))
}_{\text{by Corollary \ref{Cor:ConstantsVerticalAugHam}}}
=\varphi(\beta(0)),
\end{gather*}
which is the desired equality.

We now verify the second inequality.
Recall that the norm $||\cdot||_{\infty}$ satisfies $|\langle\rho,\zeta\rangle|\le ||\rho||_{\infty}||\zeta||_{\ffg}$ for any $\rho\in\ffg^*$ and $\zeta\in\ffg$.
Also, as mentioned above, the curve $\alpha:=j\circ\beta$ is a maximal integral curve of the vector field $\nu\left(\Xi_{h^\eta}\right)$.
Now, observe that
\begin{align*}
|f(\beta(t))|
&=|j^*h^\eta(\beta(t))|\\
&=|h^\eta(\alpha(t))|\\
&=|h(\alpha(t))-\langle\Phi(\alpha(t)),\eta\rangle|\\
&\le|h(\alpha(t))|+|\langle\Phi(\alpha(t)),\eta\rangle|\\
&=|h(\alpha(0))|+|\langle\Phi(\alpha(t)),\eta\rangle| \qquad\text{ by Corollary \ref{Cor:ConstantsVerticalAugHam}}\\
&\le |h(\alpha(0))|+ ||\Phi(\alpha(t))||_{\infty}||\eta||_{\ffg} \qquad\text{by properties of }||\cdot||_{\infty} \\
&\le |h(\alpha(0))|+ A||\Phi(\alpha(t))||_{\ffg^*}||\eta||_{\ffg} \qquad\text{ by (\ref{Eq:EquivalentNorms})}\\
&= |h(\alpha(0))|+ A||\Phi(\alpha(0))||_{\ffg^*}||\eta||_{\ffg} \qquad\text{by Corollary \ref{Cor:ConstantsVerticalAugHam}}\\
&\le |j^*h(\beta(0))|+ A||j^*\Phi(\beta(0))||_{\ffg^*}||\eta||_{\ffg}\\
&=\theta(\beta(0)).
\end{align*}
This was the second inequality we needed to prove.

Thus, any integral curve $\beta$ of the vector field $Y$ satisfies the hypotheses of Proposition \ref{Prop:MorseApp}. Applying Proposition~\ref{Prop:MorseApp}, yields that the origin $j^{-1}(m)$ is $\{1\}$-stable for the vector field $Y$ as required. Consequently, the $G$-relative equilibrium $m$ is $G$-stable for the Hamiltonian vector field~$\Xi_h$.
\end{proof}

\section{Reduction to the Marle--Guillemin--Sternberg normal form}\label{MGSNormalForm}

The goal of this section is to reduce Theorem \ref{Thm:MROTheorem} (Montaldi and Rodriguez-Olmos's criterion) to the special case Theorem~\ref{Thm:MainTheorem2}, which we proved in the previous section. The main result of this section is Lemma~\ref{Lemma:MGSReduction}.

We need the following result from linear algebra:

\begin{Lemma}\label{Lemma:LinearAlgebraResult}
Let $V$ be a finite dimensional vector space and let $T\colon V\times V\to\bbr$ be a bilinear form on it. Let $U$, $W$, and $\widetilde W$ be linear subspaces of $V$ such that
\begin{gather*}
V=U\oplus W=U\oplus\widetilde W.
\end{gather*}
Furthermore, suppose that $U$ is contained in $\ker T^\sharp$, where $T^\sharp$ is the linear map
\begin{gather*}
T^\sharp\colon \ V\to V^*,\qquad T^\sharp(v):=T(v,\cdot).
\end{gather*}
Then the following are true:
\begin{enumerate}\itemsep=0pt
\item[$1.$] If $T|_W$ is nondegenerate, then $T|_{\widetilde W}$ is nondegenerate as well.
\item[$2.$] If $T|_W$ is positive (respectively negative) definite, then $T|_{\widetilde W}$ is positive $($respectively nega\-ti\-ve$)$ definite as well.
\end{enumerate}
\end{Lemma}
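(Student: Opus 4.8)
The plan is to exhibit $T|_{\widetilde W}$ as the transport of $T|_W$ along a canonical linear isomorphism between $\widetilde W$ and $W$, so that both nondegeneracy and definiteness transfer automatically from $W$ to $\widetilde W$.

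First I would introduce the projection $\pr\colon V\to W$ along $U$ determined by the decomposition $V=U\oplus W$, so that $\pr|_W=\id$ and $\ker\pr=U$. I claim its restriction $\pr|_{\widetilde W}\colon \widetilde W\to W$ is a linear isomorphism. Indeed, $\ker(\pr|_{\widetilde W})=\widetilde W\cap U=\{0\}$ because $V=U\oplus\widetilde W$, so $\pr|_{\widetilde W}$ is injective; and since $W$ and $\widetilde W$ are both complements of $U$ in $V$ they share the dimension $\dim V-\dim U$, so the injection is in fact an isomorphism.

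The heart of the argument is the identity
\[
T(\widetilde w,\widetilde w')=T(\pr\widetilde w,\pr\widetilde w')\qquad\text{for all }\widetilde w,\widetilde w'\in\widetilde W.
\]
To establish it I would write $\widetilde w=\pr\widetilde w+u$ and $\widetilde w'=\pr\widetilde w'+u'$ with $u=(\id-\pr)\widetilde w\in U$ and $u'=(\id-\pr)\widetilde w'\in U$, and expand $T$ by bilinearity into four terms. The two terms whose first argument lies in $U$, namely $T(u,\pr\widetilde w')$ and $T(u,u')$, vanish because $U\subseteq\ker T^\sharp$ gives $T(u,\cdot)=0$; the remaining cross term $T(\pr\widetilde w,u')$ vanishes since $T(\pr\widetilde w,u')=T(u',\pr\widetilde w)=0$, using that $U$ also lies in the radical of $T$ from the second slot. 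Here I use that $T$ is symmetric, so that $\ker T^\sharp$ serves as a two-sided radical; this holds in the intended application, where $T$ is the Hessian of a smooth function. What survives is exactly $T(\pr\widetilde w,\pr\widetilde w')$.

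Finally I would transfer the two properties across the isomorphism $\pr|_{\widetilde W}$. For part (1), suppose $T|_W$ is nondegenerate and $\widetilde w\in\widetilde W$ satisfies $T(\widetilde w,\widetilde w')=0$ for all $\widetilde w'\in\widetilde W$; by the identity and the surjectivity of $\pr|_{\widetilde W}$ this gives $T(\pr\widetilde w,w')=0$ for all $w'\in W$, hence $\pr\widetilde w=0$ and so $\widetilde w=0$, proving $T|_{\widetilde W}$ is nondegenerate. For part (2), if $T|_W$ is positive definite and $\widetilde w\in\widetilde W$ is nonzero, then $\pr\widetilde w\neq0$ by injectivity, and the identity yields $T(\widetilde w,\widetilde w)=T(\pr\widetilde w,\pr\widetilde w)>0$; the negative definite case is identical. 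The main obstacle is the key identity, where the cross-term bookkeeping must fully exploit the hypothesis $U\subseteq\ker T^\sharp$: it is essential that the $U$-contributions cancel from \emph{both} arguments, since otherwise the transported form would differ from $T|_W$ by a term capable of destroying definiteness or nondegeneracy.
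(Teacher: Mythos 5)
Your proof is correct and is essentially the paper's argument in different packaging: your isomorphism $\pr|_{\widetilde W}\colon \widetilde W\to W$ is the inverse of the graph parametrization $w\mapsto w+Lw$ that the paper uses, and the expansion of $T$ on the decomposition $\widetilde w=\pr\widetilde w+u$ with the $U$-terms killed by $U\subseteq\ker T^\sharp$ is the same computation. Your explicit remark that symmetry of $T$ is needed to dispose of the cross term $T(\pr\widetilde w,u')$ is a point the paper glosses over (it asserts $T(w,Lw)=0$ from $U\subseteq\ker T^\sharp$ alone), and the caveat is harmless since the lemma is only applied to Hessians.
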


\begin{proof} Since we have the splittings $V=U\oplus W=U\oplus\widetilde W$, there exists a linear map $L\colon W\to U$ such that $\widetilde W$ is the graph of $L$; that is, such that
\begin{gather}\label{Eq:Graph}
\text{Graph}(L):=\{w+Lw\,|\, w\in W\}=\widetilde W.
\end{gather}

Now suppose that $T|_W$ is nondegenerate and let $\widetilde w\in \widetilde W$ be a vector such that $T^\sharp (\widetilde w )=0$. We want to show that $\widetilde w = 0$. By~(\ref{Eq:Graph}), there exists a vector $w\in W$ such that $\widetilde w = w+Lw$. Observe that
\begin{gather*}
0=T^\sharp (\widetilde w ) =T^\sharp(w+Lw)=T^\sharp(w)+T^\sharp(Lw)=T^\sharp(w),
\end{gather*}
where the last equality is because $U$ is contained in $\ker T^\sharp$. This in turn means that $w=0$ since $w\in W$ and $T|_W$ is nondegenerate by hypothesis. Consequently, the given vector $\widetilde w$ is $0$, so $T|_{\widetilde W}$ is nondegenerate.

For the second statement, suppose $T|_W$ is positive definite and let $\widetilde w\in\widetilde W$ be an arbitrary nonzero vector in~$\widetilde W$. By~(\ref{Eq:Graph}) and since $\widetilde w$ is nonzero, there exists a nonzero vector $w\in W$ such that $\widetilde w = w + Lw$. Then note
\begin{gather*}
T(\widetilde w, \widetilde w )=T(w+Lw,w+Lw)
=T(w,w)+T(w,Lw)+T(Lw,w)+T(Lw,Lw)\\
\hphantom{T(\widetilde w, \widetilde w )}{} =T(w,w) >0.
\end{gather*}
The third equality follows because $U$ is contained in $\ker T^\sharp$, and also $Lw\in U$. The inequality follows because $T|_W$ is positive definite. Thus, since $\widetilde w\in \widetilde W$ is an arbitrary nonzero vector of~$\widetilde W$, the map $T|_{\widetilde W}$ is positive definite. A completely analogous argument works in the case when~$T|_W$ is negative definite.
\end{proof}

With this we can prove the following:
\begin{Lemma}\label{Lemma:SymplecticSliceIndep}
Let $(M,\omega, G, \Phi, h)$ be a Hamiltonian $G$-system, let the point $m$ in the mani\-fold~$M$ be a relative equilibrium of the Hamiltonian vector field $\Xi_h$ such that $\Phi(m)=0$, and let $K$ be the stabilizer of the point~$m$. Furthermore, let $W$ and $\widetilde W$ be $K$-invariant subspaces of the kernel $\ker\d\Phi_m$ such that
\begin{gather*}
\ker\d\Phi_m=T_m(G\cdot m)\oplus W=T_m(G\cdot m)\oplus \widetilde W.
\end{gather*}
Suppose there exists a velocity $\xi\in\ffg$ of $m$, with augmented Hamiltonian $h^\xi:=h-\langle\Phi,\xi\rangle$, such that the Hessian $\d^2h^\xi(m)|_W$ is definite and nondegenerate.
Then the Hessian $\d^2h^\xi(m)|_{\widetilde W}$ is definite and nondegenerate.
\end{Lemma}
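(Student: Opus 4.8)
The plan is to deduce the statement directly from the linear algebra result Lemma~\ref{Lemma:LinearAlgebraResult}. I would set $V:=\ker\d\Phi_m$, take $U:=T_m(G\cdot m)$, and let $T:=\d^2h^\xi(m)|_V$ be the restriction of the Hessian to $V$. By Lemma~\ref{Lemma:HamRelEqChar}, the velocity $\xi$ makes $m$ a critical point of $h^\xi$, so the Hessian $\d^2h^\xi(m)$, and hence $T$, is well-defined (Remark~\ref{Rem:Hessians}). Since $\Phi(m)=0$ and $\Phi$ is equivariant, differentiating $\Phi(g\cdot m)=\CoAd(g)\Phi(m)$ in the group direction $\eta$ gives $\d\Phi_m(\eta_M(m))=0$ for every $\eta\in\ffg$, so that indeed $U=T_m(G\cdot m)\subseteq V$ and the two direct-sum decompositions in the hypothesis live inside $V$. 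With this data, the hypotheses of Lemma~\ref{Lemma:LinearAlgebraResult} that remain to be checked are the splittings $V=U\oplus W=U\oplus\widetilde W$, which are given, and the single nontrivial point $U\subseteq\ker T^\sharp$.

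The heart of the argument is therefore to show $T_m(G\cdot m)\subseteq\ker T^\sharp$, that is, $\d^2h^\xi(m)(\eta_M(m),u)=0$ for every $\eta\in\ffg$ and every $u\in\ker\d\Phi_m$ (recall $T_m(G\cdot m)=\{\eta_M(m)\,|\,\eta\in\ffg\}$). Since $m$ is a critical point, the Hessian is symmetric and satisfies $\d^2h^\xi(m)(X_m,Y_m)=Y_m(Xh^\xi)$ for vector fields $X,Y$; taking $X=\eta_M$ and $Y$ any extension of $u$ reduces the claim to computing the function $\eta_M(h^\xi)=\d h^\xi(\eta_M)$, since $\d^2h^\xi(m)(\eta_M(m),u)=\d\big(\eta_M(h^\xi)\big)_m(u)$. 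I would compute this function globally on $M$: because $h$ is $G$-invariant we have $\d h(\eta_M)\equiv 0$, while differentiating $\langle\Phi,\xi\rangle$ along $\eta_M$ and using the equivariance of $\Phi$ yields $\eta_M\langle\Phi,\xi\rangle=-\langle\Phi,[\eta,\xi]\rangle$. Hence $\eta_M(h^\xi)=\langle\Phi,[\eta,\xi]\rangle$ as a function on $M$.

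Differentiating this identity at $m$ in the direction $u$ then gives $\d\big(\eta_M(h^\xi)\big)_m(u)=\langle\d\Phi_m(u),[\eta,\xi]\rangle$, which vanishes precisely because $u\in\ker\d\Phi_m$. This establishes $U\subseteq\ker T^\sharp$. Applying the first part of Lemma~\ref{Lemma:LinearAlgebraResult} transfers the nondegeneracy of $T|_W=\d^2h^\xi(m)|_W$ to $T|_{\widetilde W}=\d^2h^\xi(m)|_{\widetilde W}$, and the second part transfers the sign of definiteness; together these give the desired conclusion. Note that the $K$-invariance of $W$ and $\widetilde W$ plays no role and can be ignored.

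I expect the main obstacle to be the clean bookkeeping of the Hessian-as-directional-derivative identity together with the equivariance computation: one must take care that $\eta_M(h^\xi)$ is treated as a genuine function on $M$ (so that its differential at $m$ sees $\d\Phi_m$ and annihilates $\ker\d\Phi_m$), and that the critical-point symmetry of the Hessian is invoked so that the derivative falls on the $\Phi$-dependent factor rather than on an arbitrary extension of $u$. Everything else is a direct instantiation of Lemma~\ref{Lemma:LinearAlgebraResult}.
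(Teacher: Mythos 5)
Your proposal is correct and follows essentially the same route as the paper: both instantiate Lemma~\ref{Lemma:LinearAlgebraResult} with $U=T_m(G\cdot m)$ and reduce everything to the identity $\d^2h^\xi(m)(\eta_M(m),u)=\langle\d\Phi_m(u),[\eta,\xi]\rangle$, which vanishes for $u\in\ker\d\Phi_m$. The only difference is mechanical: the paper obtains this identity by differentiating the family of critical-point equations $\d h^{\Ad(\exp(s\eta))\xi}(\exp(s\eta)\cdot m)=0$ along a two-parameter family of curves, whereas you compute $\eta_M(h^\xi)=\langle\Phi,[\eta,\xi]\rangle$ globally and invoke the critical-point symmetry of the Hessian --- an equivalent and arguably tidier bookkeeping of the same computation.
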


\begin{proof}
By Lemma \ref{Lemma:LinearAlgebraResult}, it suffices to show that
\begin{gather*}
T_m(G\cdot m)\subseteq \ker \big(\d^2h^\xi(m)|_{\ker\d\Phi_m}\big)^\sharp.
\end{gather*}
Let $u\in T_m(G\cdot m)$ be an arbitrary vector. We need to show that the functional:
\begin{gather*}
\big(\d^2h^\xi(m)\big)^\sharp(u)=\d^2h^\xi(m)(u,\cdot)
\end{gather*}
vanishes on the vector space $\ker\d\Phi_m$. Since $T_m(G\cdot m)=\{\eta_M(m)\,|\,\eta\in\ffg\}$, the vector $u$ is of the form $u=\eta_M(m)$ for some $\eta\in\ffg$. Thus, we want to show that for every vector $v\in\ker\d\Phi_m$ we have that
\begin{gather*}
\d^2h^\xi(m) (\eta_M(m),v )=0.
\end{gather*}
For this, recall that for all $g\in G$, the vector $\Ad(g)\xi$ is a velocity for the $G$-relative equilibrium $g\cdot m$, so that $\d h^{\Ad(g)\xi}(g\cdot m)=0$ for all $g\in G$. In particular, for any $s\in \bbr$, setting $g=\exp(s\eta)$ we have
\begin{align}\label{Eq:VelocityFact}
\begin{split}
0&=\d h^{\Ad(\exp(s\eta))\xi}(\exp(s\eta)\cdot m) \\
&=\d h (\exp(s\eta)\cdot m)-\d\langle\Phi,\Ad(\exp(s\eta)\xi)\rangle(\exp(s\eta)\cdot m).
\end{split}
\end{align}
Now let $\gamma\colon (-\epsilon,\epsilon)\times(-\epsilon,\epsilon)\to M$ be a family of smooth curves such that
\begin{enumerate}\itemsep=0pt
\item[1)] $\gamma(0,0)=m$,
\item[2)] $\frac{\partial}{\partial t}\Big|_{t=0}\gamma(s,t)\in T_{\exp(s\eta)\cdot m}M$ for all $s\in(-\epsilon,\epsilon)$,
\item[3)] $\frac{\partial}{\partial s}\Big|_{s=0}\gamma(s,0)=\eta_M(m)$,
\item[4)] $\frac{\partial}{\partial t}\Big|_{t=0}\gamma(0,t)=v$.
\end{enumerate}
Then for all $s\in(-\epsilon,\epsilon)$, use (\ref{Eq:VelocityFact}) to get that
\begin{align}\label{Eq:tDerivative}
\begin{split}
0&=\d h^{\Ad(\exp(s\eta))\xi}(\exp(s\eta)\cdot m)\left(\frac{\partial}{\partial t}\Big|_{t=0}\gamma(s,t)\right) \\
&=\big(\d h (\exp(s\eta)\cdot m)-\d\langle\Phi,\Ad(\exp(s\eta)\xi)\rangle(\exp(s\eta)\cdot m)\big)\left(\frac{\partial}{\partial t}\Big|_{t=0}\gamma(s,t)\right)\\
&=\frac{\partial}{\partial t}\Big|_{t=0}\big(h(\gamma(s,t))-\langle\Phi(\gamma(s,t)),\Ad(\exp(s\eta))\xi\rangle\big).
\end{split}
\end{align}
Now differentiate (\ref{Eq:tDerivative}) with respect to $s$ at $s=0$ to get
\begin{align*}
\begin{split}
0&=\frac{\partial}{\partial s \partial t}\Big|_{s=t=0}\big(h(\gamma(s,t))-\langle\Phi(\gamma(s,t)),\Ad(\exp(s\eta))\xi\rangle\big) \\
&=\frac{\partial}{\partial s \partial t}\Big|_{s=t=0}\big(h(\gamma(s,t))-\langle\Phi(\gamma(s,t)),\xi\rangle\big)-\frac{\partial}{\partial t}\Big|_{t=0}\langle\Phi,[\eta,\xi]\rangle(\gamma(0,t))\\
&=\d^2h^\xi(m)(\eta_M(m),v)-(\d\langle\Phi,[\eta,\xi]\rangle)(m)(v)\\
&=\d^2h^\xi(m)(\eta_M(m),v)-\langle\d\Phi_mv,[\eta,\xi]\rangle\\
&=\d^2h^\xi(m)(\eta_M(m),v),
\end{split}
\end{align*}
where the second equality follows by the product rule and the last equality follows because $v\in\ker\d\Phi_m$. This shows $\left(\d^2h^\xi(m)\right)^\sharp(u)$ vanishes on $\ker\d\Phi_m$, concluding the proof.
\end{proof}

We are now ready to prove the main result of this section. As mentioned in Remark~\ref{Rem:FixedMomentum}, to prove Theorem~\ref{Thm:MROTheorem} it is of no loss of generality to suppose the Hamiltonian vanishes at the relative equilibrium and that the moment is fixed. Under these assumptions, the following lemma reduces Theorem~\ref{Thm:MROTheorem} to Theorem~\ref{Thm:MainTheorem2} by showing that the assumptions on the Hessian in Theorem~\ref{Thm:MROTheorem} reduce to the MGS normal form as in Theorem~\ref{Thm:MainTheorem2}.

\begin{Lemma}\label{Lemma:MGSReduction}
Let $(M,\omega,G,\Phi,h)$ be a Hamiltonian $G$-system with the point $m$ in the mani\-fold~$M$ as a relative equilibrium of the vector field~$\Xi_h$. Suppose that $h(m)=0$ and $\Phi(m)=0$. Denote by $K$ the stabilizer of the point $m$. Let $\left(Z:=G\times^KV,\omega_z,G, \Phi_Z,h_Z\right)$ be the MGS normal form of the given Hamiltonian $G$-system $($see Definition~{\rm \ref{Def:MGSNormalForm})}. Furthermore:
\begin{enumerate}\itemsep=0pt
\item[$1.$] Let $V:=\ffk_r^0\times W_r$ be the slice, where $\ffk_r^0$ is a $K$-invariant neighborhood of the origin in the annihilator $\ffk^0$ of the Lie algebra $\ffk$ of $K$, and $W_r$ is a $K$-invariant neighborhood of the origin in the symplectic slice~$W$ of~$m$.
\item[$2.$] Let the map $j\colon V\hookrightarrow Z$ be the $K$-equivariant embedding defined by $j(\rho,w):=[1,\rho,w]$.
\item[$3.$] Let $\Psi\colon Z\to U_m$ be the symplectomorphism to a neighborhood $U_m$ of the orbit $G\cdot m$ guaranteed by Theorem~{\rm \ref{Thm:MGSNormalForm}}.
\end{enumerate}
Suppose that there exists a velocity $\xi\in\ffg$ of the point $m$ and a linear subspace $U$ of the kernel $\ker\d\Phi_m$ such that
\begin{enumerate}\itemsep=0pt
\item[$1.$] The kernel $\ker\d\Phi_m$ splits as $\ker\d\Phi_m=T_m(G\cdot m)\oplus U$.
\item[$2.$] The Hessian $\d^2h^\xi(m)|_U$ is definite and nondegenerate.
\end{enumerate}
Consider the tangent space $\widetilde W:=T_{[1,0,0]}j\left(\{0\}\times W_r\right)\subseteq T_{[1,0,0]}Z$, then the Hessian
\begin{gather*}
\d^2h_Z^\xi\big([1,0,0]\big)|_{\widetilde W}
\end{gather*}
is well-defined, definite, and nondegenerate.
\end{Lemma}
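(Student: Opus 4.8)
The plan is to transport the definiteness/nondegeneracy hypothesis from the subspace $U\subseteq\ker\d\Phi_m$ on $M$ over to the subspace $\widetilde W\subseteq T_{[1,0,0]}Z$ by means of the symplectomorphism $\Psi$, and then to recognize the transported subspace as merely one more complement to the orbit tangent space, so that Lemma~\ref{Lemma:SymplecticSliceIndep} applies.

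First I would reduce everything to a statement on $M$ via $\Psi$. Since $h_Z=\Psi^*h$ and $\Phi_Z=\Phi\circ\Psi$, the augmented Hamiltonians satisfy $h_Z^\xi=\Psi^*h^\xi$. By Lemma~\ref{Lemma:HamRelEqChar}, and because $\Phi(m)=0$ forces $\ffg_\mu=\ffg\ni\xi$, the velocity $\xi$ makes $m$ a critical point of $h^\xi$; as $\Psi([1,0,0])=m$, the point $[1,0,0]$ is then a critical point of $h_Z^\xi$, so the Hessian $\d^2h_Z^\xi([1,0,0])$ is well-defined. The pullback behavior of Hessians at critical points (Remark~\ref{Rem:Hessians}) gives $\d^2 h_Z^\xi([1,0,0])=\Psi^*\big(\d^2h^\xi(m)\big)$, so that $\d^2 h_Z^\xi([1,0,0])|_{\widetilde W}$ is definite and nondegenerate if and only if $\d^2 h^\xi(m)|_{\widehat W}$ is, where I set $\widehat W:=\d\Psi_{[1,0,0]}(\widetilde W)\subseteq T_mM$.

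The key step is to identify $\widehat W$ as a $K$-invariant complement to $T_m(G\cdot m)$ inside $\ker\d\Phi_m$. Using the explicit momentum map $\Phi_Z([g,\rho,w])=\CoAd(g)\big(\rho+\iota(\Phi_W(w))\big)$ I would compute $\ker\d(\Phi_Z)_{[1,0,0]}$ along the three summands of $T_{[1,0,0]}Z$: the orbit directions lie in the kernel since $\rho=0$ and $\Phi_W(0)=0$ make $\Phi_Z$ vanish identically along $G\cdot[1,0,0]$; the $W$-directions $\widetilde W$ lie in the kernel because the momentum map $\Phi_W$ of a linear symplectic representation is homogeneous quadratic, so $\d(\Phi_W)_0=0$; and the $\ffk^0$-directions map isomorphically onto $\ffk^0\subseteq\ffg^*$. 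Hence $\ker\d(\Phi_Z)_{[1,0,0]}=T_{[1,0,0]}(G\cdot[1,0,0])\oplus\widetilde W$. Applying the isomorphism $\d\Psi_{[1,0,0]}$, and using $\Phi\circ\Psi=\Phi_Z$ together with the $G$-equivariance of $\Psi$, transports this decomposition to $\ker\d\Phi_m=T_m(G\cdot m)\oplus\widehat W$. Moreover $\widehat W$ is $K$-invariant: the subgroup $K$ fixes both $[1,0,0]$ and $m$, the set $\{0\}\times W_r$ is $K$-invariant, and both $j$ and $\Psi$ are $K$-equivariant.

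Finally I would invoke Lemma~\ref{Lemma:SymplecticSliceIndep} with $U$ in the role of $W$ and $\widehat W$ in the role of $\widetilde W$: both are complements to $T_m(G\cdot m)$ in $\ker\d\Phi_m$, and $\d^2 h^\xi(m)|_U$ is definite and nondegenerate by hypothesis, so $\d^2 h^\xi(m)|_{\widehat W}$ is too; pulling back via $\Psi$ then yields the claim. I expect the main obstacle to be the kernel computation, in particular justifying that the $W$-directions are annihilated by $\d(\Phi_Z)_{[1,0,0]}$ (the quadratic vanishing of $\Phi_W$ at the origin) and checking that the direct-sum decomposition transports correctly under the linear isomorphism $\d\Psi_{[1,0,0]}$. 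A minor subtlety is that Lemma~\ref{Lemma:SymplecticSliceIndep} is phrased for $K$-invariant complements; should the given $U$ fail to be $K$-invariant, the same conclusion follows from the purely linear-algebraic Lemma~\ref{Lemma:LinearAlgebraResult}, whose hypothesis $T_m(G\cdot m)\subseteq\ker\big(\d^2h^\xi(m)|_{\ker\d\Phi_m}\big)^\sharp$ is precisely what the proof of Lemma~\ref{Lemma:SymplecticSliceIndep} establishes without ever using $K$-invariance.
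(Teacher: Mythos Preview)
Your proposal is correct and follows essentially the same route as the paper: transport the hypothesis through $\Psi$ to a complement $\widehat W=\d\Psi(\widetilde W)$ of $T_m(G\cdot m)$ in $\ker\d\Phi_m$, then invoke Lemma~\ref{Lemma:SymplecticSliceIndep} and pull back. Your explicit kernel computation is more detailed than the paper's, which simply asserts the decomposition $\ker(\d\Phi_Z)_{[1,0,0]}=T_{[1,0,0]}(G\cdot[1,0,0])\oplus\widetilde W$ by appeal to the symplectic-slice definition; and your closing remark about the superfluous $K$-invariance hypothesis in Lemma~\ref{Lemma:SymplecticSliceIndep} is a genuine observation the paper glosses over, since Lemma~\ref{Lemma:MGSReduction} does not assume $U$ is $K$-invariant.
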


\begin{proof} To simplify notation throughout this proof, we identify the spaces $W_r$ and $\{0\}\times W_r$ and write
\begin{gather*}
\widetilde W:=T_{[1,0,0]}j(W_r)\subseteq T_{[1,0,0]}Z \qquad\text{and}\qquad \widetilde U:=\d\Psi\big(\widetilde W\big)\subset T_mM.
\end{gather*}
We begin with three observations. First, since the map $\Psi$ pulls back the momentum map on $M$ to the one on $Z$, we get that
\begin{gather}\label{Eq:Observation1}
\d\Psi\big(\ker(\d\Phi_Z)_{[1,0,0]}\big)=\ker\d\Phi_m.
\end{gather}
Second, from the $G$-equivariance of the diffeomorphism $\Psi$ we get that
\begin{gather}\label{Eq:Observation2}
\d\Psi\big(T_{[1,0,0]}(G\cdot [1,0,0])\big)=T_m(G\cdot m).
\end{gather}
Third, note that the linear subspace $\widetilde W$ is $K$-invariant under the $K$-representation it inherits from the action of $G$ on the mani\-fold~$M$. Also recall that $W_r$ is a neighborhood in the symplectic slice~$W$. Thus, in particular, the linear subspace $\widetilde W$ is such that
\begin{gather}\label{Eq:Observation3}
\ker(\d\Phi_Z)_{[1,0,0]}=T_{[1,0,0]}(G\cdot [1,0,0])\oplus \widetilde W.
\end{gather}
Using (\ref{Eq:Observation1}), (\ref{Eq:Observation2}), and~(\ref{Eq:Observation3}) we get that $\widetilde U$ is a $K$-invariant subspace of~$T_mM$ such that
\begin{gather}\label{Eq:Observation4}
\ker\d\Phi_m=T_m(G\cdot m)\oplus \widetilde U.
\end{gather}
Using (\ref{Eq:Observation4}), the assumptions on the space $U$, and Lemma \ref{Lemma:SymplecticSliceIndep}, we obtain that the Hessian $\d^2h^\xi(m)|_{\widetilde U}$ is definite and nondegenerate.

Now observe that the augmented Hamiltonian $h^\xi_Z$ of the pullback $h_Z$ is the pullback of the augmented Hamiltonian $h^\xi$:
\begin{gather*}
h^\xi_Z
=h_Z-\langle\Phi_Z,\xi\rangle
=\Psi^*h-\langle\Psi^*\Phi,\xi\rangle
=\Psi^*h^\xi.
\end{gather*}
Thus, since the point $m$ is a critical point of the augmented Hamiltonian $h^\xi$ we have that
\begin{gather*}
\d h^\xi_Z([1,0,0])=\d\big(\Psi^*h^\xi\big)([1,0,0])=\Psi^*\big(\d h^\xi\big)(m) =\Psi^*(0)=0.
\end{gather*}
Hence, the Hessian $\d^2h^\xi_Z([1,0,0])$ is well-defined.

Finally, by the pullback properties of Hessians (see Remark \ref{Rem:Hessians}), we have that
\begin{gather*}
\d^2h^\xi_Z([1,0,0])|_{\widetilde W} =\d^2\big(\Psi^*h^\xi\big)([1,0,0])|_{\widetilde W} =\Psi^*\big(\d^2h^\xi(m)\big)|_{\widetilde U}.
\end{gather*}
Since the map $\Psi$ is a diffeomorphism, this implies that the Hessian $\d^2h^\xi_Z([1,0,0])|_{\widetilde W}$ is definite and nondegenerate because the Hessian $\d^2h^\xi(m)|_{\widetilde U}$ is definite and nondegenerate.
\end{proof}

\subsection*{Acknowledgements}

The author would like to thank Eugene Lerman for providing guidance throughout this project, for his enduring patience with my many questions, and for the many interesting conversations on the subject. The author would also like to express their gratitude to the anonymous referee for their helpful comments and careful review of the preprint of this paper.

%\cite{PR00,PRW04}

\pdfbookmark[1]{References}{ref}
\LastPageEnding

\end{document}